\def\ps@pprintTitle{%
 \let\@oddhead\@empty
 \let\@evenhead\@empty
 \def\@oddfoot{\centerline{\thepage}}%
 \let\@evenfoot\@oddfoot}
\def\paragraph{\@startsection{paragraph}{4}%
  \z@\z@{-\fontdimen2\font}%
  {\normalfont\bfseries}}
\theoremstyle{plain}
\definecolor{Gray}{gray}{0.9}
\setlist[tablenotes]{label=\tnote{\alph*},ref=\alph*,itemsep=\z@,topsep=\z@skip,partopsep=\z@skip,parsep=\z@,itemindent=\z@,labelindent=\tabcolsep,labelsep=.2em,leftmargin=*,align=left,before={\footnotesize}}
\newtheorem{thm}{Theorem}[section]
\newtheorem{cor}[thm]{Corollary}
\newtheorem{lem}[thm]{Lemma}
\newtheorem{prop}[thm]{Proposition}
\theoremstyle{definition}
\theoremstyle{remark}
\newtheorem{remark}[thm]{Remark}
\providecommand{\mathscr}[1]{\mathcal{#1}}
\def\R{{\Bbb R}}
\def\M{{\cal M}}
\def\N{{\Bbb N}}
\def\1{{\bf 1}}
\def\E{{\Bbb E}}
\newcommand{\cF}{\mathcal{F}}
\newcommand{\cP}{\mathcal{P}}
\newcommand{\cB}{\mathcal{B}}
\newcommand{\cW}{\mathcal{W}}
\newcommand{\bP}{\mathbb{P}}
\def\correspondingauthor{\footnote{Corresponding author.}}
\newcommand{\UniBA}{%
   Universit\`{a} degli Studi di Bari ``Aldo Moro'',
   Department of~Economics and Finance,
   Largo Abbazia S.~Scolastica,
   Bari, I-70124 Italy}
\begin{document}

\title{On the ergodicity of a three-factor CIR model}

\author[ssm]{Giacomo Ascione} 
\address[ssm]{Scuola Superiore Meridionale, Largo S. Marcellino 10, Napoli, 80138, \texttt{g.ascione@ssmeridionale.it}}

\author[uniro]{Michele Bufalo} 
\address[uniro]{Universit\`a degli Studi di Roma "La Sapienza" - Department of Methods and Models for Economics, Territory and Finance, Via del Castro Laurenziano 9, Roma, I-00185, Tel. +39-06-49766903, \texttt{michele.bufalo@uniroma1.it}}

\author[uniba]{Giuseppe Orlando\correspondingauthor{}}
\address[uniba]{\UniBA, Tel. +39 080 5049218, \texttt{giuseppe.orlando@uniba.it}}

\begin{abstract}
This study introduces the $CIR^3$ model, a three-factor model characterized by stochastic and correlated trends and volatilities. The paper focuses on establishing the Wasserstein ergodicity of this model, a task not achievable through conventional means such as the Dobrushin theorem. Instead, alternative mathematical approaches are employed, including considerations of topological aspects of Wasserstein spaces and Kolmogorov equations for measures. Remarkably, the methodology developed here can also be applied to prove the Wasserstein ergodicity of the widely recognized three-factor Chen model.
\end{abstract}

\begin{keyword} 

Cox-Ingersoll-Ross process;
Feller process;
Wasserstein distance; 
Ergodicity;
Chen model

\MSC[2020] 37A50; 60G53; 60J65; 91G30 

\end{keyword}

\maketitle

\newpage

\section{Introduction}
The Cox-Ingersoll-Ross process (CIR for short), initially introduced in \cite{cox1985theory}, continues to hold a significant position in mathematical finance (refer to the survey \cite{sundaresan2000continuous}). Since its inception, various adaptations of this process have emerged. For instance, in \cite{mishura2018stochastic}, a CIR model driven by a fractional Brownian motion is examined, with its properties extensively analyzed in \cite{mishura2018fractional, mishura2018fractional2, mishura2023standard}. Alternatively, in \cite{najafi2017bond}, a mixed Brownian motion is utilized. Furthermore, in \cite{leonenko2013fractional}, a fractional CIR model is derived through a time-change mechanism, with its covariance structure explored in \cite{leonenko2013correlation}. A similar approach is adopted in \cite{ascione2021time} within a broader context. Moreover, in \cite{ascione2023foreign}, a four-factor CIR model incorporating jumps, driven by a variance gamma process, is investigated. Lastly, in \cite{zhu2014limit}, a CIR model with Hawkes jumps is studied.

The methodology proposed in \cite{OMB2018, OMB2019a} aimed to fit interest rates across multiple currencies and tenors while preserving the structure of the original CIR model, even in the presence of negative interest rates. In \cite{OMB2019b}, an appropriate partitioning of the data sample was introduced, allowing for the capture of statistically significant time changes in interest rate volatility and consideration of market dynamics jumps. Unlike conventional use for pricing, these studies focused specifically on forecasting expected rates at a given maturity.
Additionally, the single-factor modified CIR model, known as the CIR\# model, introduced in \cite{OB2021, OMB2019c}, emerged as a powerful tool for forecasting future expected interest rates based on observed financial market data. This approach effectively addresses common challenges encountered in interest rate modeling, including simulating regime switching, clustered volatility, and skewed tails. In this paper, we consider a three-factor CIR model, denoted as $CIR^3$ and previously introduced in \cite{Ceci2024}.



In Section \ref{S:Background}, a more detailed exploration of the background, motivations, and main result is provided. However, in the following, we will briefly outline them.
The utilization of CIR processes alongside other stochastic differential equations extends to multi-factor financial models, such as the Heston model. While the classical Heston model is a two-factor model \cite{heston1993closed}, various multi-factor generalizations have been explored, including the Chen model \cite{Chen1996, Chen1996b}.
The Chen model introduces a bond pricing formula under a non-trivial, three-factor model of interest rates \cite{Chen1996}. In this model, the future short rate depends on the current short rate, the short-term mean of the short rate, and the current volatility of the short rate. The model is inspired by the observation that short rates and their volatility are not constant and mean-reverting, achieved through a square root process excluding negative values \cite{Chen1996}.
Our focus lies on a modified version of the Chen model, named the $CIR^3$ model. Here, the state variables are interconnected through correlated noise, and both the stochastic volatility process and the stochastic mean process play roles in the equation of the price process. We investigate the limit and stationary distribution properties of the $CIR^3$ process, employing a strategy that exploits the cascade structure of the system of stochastic differential equations. The main result, Theorem \ref{thm:pweakergoR}, establishes the $p$-weak ergodicity of the $CIR^3$ process under suitable assumptions on the initial data.
Let's underline that the interest in this problem is not only related to its financial aspects but also to its mathematical structure. We aim to generalize our approach to broader systems of stochastic differential equations with a "leader-follower" structure in future work. This includes models where:
\[ 
\begin{cases}
dX_t = F_X(t,X_t) dt + \sigma_X(t,X_t) dW^{(1)}_t \\
dY_t = F_Y(t,X_t,Y_t) dt + \sigma_Y(t,X_t,Y_t) dW^{(2)}_t
\end{cases}
\]
Here, $W_t = (W^{(1)}_t, W^{(2)}_t)$ is a possibly correlated Brownian motion. This effort requires additional regularity properties of the coefficients which are further examined in the remainder of the paper.\\

The organization of this paper is as follows. Section \ref{S:Background} provides the background, motivations and main result (i.e. Theorem \ref{thm:pweakergoR}). In Section \ref{S:Preliminaries} we present some preliminaries on the topology of the $p$-Wasserstein metric spaces and the $CIR^3$ model.
%
The proof of Theorem \ref{thm:pweakergoR} will be articulated throughout a substantial number of auxiliary results, presented in sections \ref{sec:CIRwe}, \ref{subsec:twofactor}, and \ref{sec:finalsec}. Specifically, Section \ref{sec:CIRwe} explores the $p$-weak ergodicity of the $CIR$ process $v_t$. Despite the strong ergodicity of such a process is already known, the classical result given in \cite[Theorem V.54.5]{rogers2000diffusions} does not provide the $p$-weak ergodicity for $p>1$ or the rate of convergence. For this reason we provide full details on the $p$-Wasserstein convergence of the single-factor process. Moving on, Section \ref{subsec:twofactor}, in accordance with the \textit{cascade structure} of the equation, focuses on establishing the existence of the limit distribution of the two-factor model. This part is subdivided in four steps that will be described in details in what follows. Once the limit distribution of the two-factor model has been provided, in Section \ref{sec:finalsec} we finally prove Theorem \ref{thm:pweakergoR}. Section \ref{Sec:Conclusion} provides some concluding remarks.

\section{Background on the CIR process,  motivations and main result} \label{S:Background}

CIR processes are also utilized in combination with other stochastic differential equations to describe multi-factor financial models, such as the Heston model. While the classical Heston model, introduced in \cite{heston1993closed}, is a two-factor model, several multi-factor generalizations have been explored; for instance, see \cite{christoffersen2009shape} and references therein. Among multi-factor financial models, the Chen model, introduced in \cite{Chen1996, Chen1996b}, holds particular significance.
Chen proposed a bond pricing formula "under a non-trivial, three-factor model of interest rates" \cite{Chen1996}, where the future short rate "depends on (1) the current short rate, (2) the short-term mean of the short rate, and (3) the current volatility of the short rate". The model was inspired by the observation that both short rates and their volatility are not constant and mean-reverting as well. This could be achieved by means of a square root process that excludes negative values and allows for mean reversion \cite{Chen1996}.

Summarizing, Chen model relies on the following system of stochastic differential equations
\begin{equation}\label{eq:SDEaux2c}
	\begin{cases}
		dR_t=k(\theta_t-R_t)+\alpha\sqrt{v_tR_t}dW^{(1)}_t\\
		d\theta_t=k_\theta(\zeta-\theta_t)dt+\beta \sqrt{\theta_t}dW^{(2)}_t\\
		dv_t=k_v(\eta-v_t)dt+\gamma \sqrt{v_t}dW^{(3)}_t,
	\end{cases}
\end{equation}
where $\{(W^{(1)}_t,W^{(2)}_t,W^{(3)}_t), \ t \ge 0\}$ is a three-dimensional standard Brownian motion and $k,k_\theta,k_v,\alpha,\beta,\gamma,\zeta,\eta>0$ are constants. Up to our knowledge, proof of global existence and pathwise uniqueness of a strong solution of \eqref{eq:SDEaux2c} has not been provided. It should be noted that in the Chen model the stochastic volatility process $v$ and the stochastic mean process $\theta$ are independent CIR processes which then play a role in the equation of the price process $R$.


%
In this work, we consider a modified Chen model, called the $CIR^3$ model, where the state variables are interconnected through correlated noise and the stochastic mean process $\theta$ is also subject to the stochastic volatility process $v$. Summarizing, we rely on the system of stochastic differential equations
\begin{equation} \label{Our_Model_true_intro}
	\begin{cases}
		dR_t=k(\theta_t-R_t)dt+\alpha \sqrt{|v_tR_t|}dW^{(1)}_t \\
		d\theta_t=k_\theta(\zeta-\theta_t)dt+\alpha\beta\sqrt{|v_t\theta_t|}dW^{(2)}_t \\
		dv_t=k_v(\eta-v_t)dt+\gamma\sqrt{|v_t|}dW^{(3)}_t,
	\end{cases}
\end{equation}
where $\{(W^{(1)}_t,W^{(2)}_t,W^{(3)}_t), \ t \ge 0\}$ is a three-dimensional correlated Brownian motion, whose correlation matrix will be specified later, and $k,k_\theta,k_v,\alpha,\beta,\gamma,\zeta,\eta>0$ are constants. Both the correlation of the noise and the proportionality of the volatility of $R$ and $\theta$ are realistic assumptions. This model has been first investigated in \cite{Ceci2024}, in which global existence of weak solutions and local existence and pathwise uniqueness of strong solutions of \eqref{Our_Model_true_intro} have been proved. Global existence and pathwise uniqueness of strong solutions of \eqref{Our_Model_true_intro} have been shown in \cite{Ascione2024Jan}, where positivity of the involved factors has been also investigated. Furthermore, parameter estimation and forecasting through the $CIR^3$ model have been discussed in \cite{Ascione2023Dec}. 

Here we focus on the existence of a limit and stationary distribution. Such a property, that is suggested by the mean-reverting nature of \eqref{Our_Model_true_intro}, turns out to be an interesting tool to better investigate financial time series. Indeed, on the one hand, the identification of a stationary distribution can help with parameter estimation and calibration of stationary time series that are modelled through such a model. On the other hand, the existence of a limit distribution can be used to describe the long-time behaviour of models described through the $CIR^3$ process.

Despite being clearly drift-dissipative, classical result on dissipative stochastic differential equations cannot be used on the $CIR^3$ system \eqref{Our_Model_true_intro} due to the presence of a non-Lipschitz diffusion term. In the one-dimensional setting, classical results on the total variation distance, such as \cite[Theorem V.54.5]{rogers2000diffusions}, can be used to prove the strong ergodicity of the $CIR$ process. These results clearly do not apply to the $3$-dimensional setting. Hence, to study the limit distribution of the $CIR^3$ process, we rely on the ergodicity in the $p$-Wasserstein distance, which is also called $p$-weak ergodicity. Despite the name, it is interesting to observe that the strong ergodicity (in total variation) only implies the $1$-weak ergodicity, while one needs a weighted total variation distance to imply the case $p>1$, see \cite[Theorem 6.15]{villani2009optimal}. For this reason, despite the name, the $p$-weak ergodicity could be not comparable  with the strong ergodicity. The main advantage of such an approach is the fact that to provide estimates of the $p$-Wasserstein distances we can work directly with the stochastic differential equations by means of It\^{o}'s formula and further moment estimates. This approach is classical in the theory of $p$-weak ergodicity and leads to a quite general result for drift-dissipative stochastic differential equations with Lipschitz diffusion coefficients, as in \cite[Section 4.2.1]{kulik2017ergodic}. These arguments, however, do not apply to \eqref{Our_Model_true_intro}, due to the lack of Lipschitz property in the diffusion term. For this reason, we develop a strategy that exploits the \textit{cascade} structure of the system \eqref{Our_Model_true_intro} to prove the existence of a stationary distribution and the convergence in $p$-Wasserstein distance of the $CIR^3$ process, under suitable assumptions on the initial data. More precisely, if we denote by $\{\bm{R}_t^{\bm{X}}\}_{t \ge 0}$ the pathwise unique global strong solution of \eqref{Our_Model_true_intro} with initial data $\bm{X}$ on a probability space $(\Omega,\cF,\{\cF_t\}_{t \ge 0}, \bP)$, the main result of the paper is given by the following theorem.
\begin{thm}\label{thm:pweakergoR}
	Let $p \ge 2$, $\overline{p}>2p-1$, $p'>2\overline{p}-1$ and $\widetilde{p}=\max\left\{2\overline{p}-1,\frac{p'}{p'-2\overline{p}+1}\right\}$. Consider three $\cF_0$-random variables $X^{(1)},X^{(2)} \in L^{p'}(\Omega;\R_{+,0}^2)$ and $X^{(3)} \in L^{\widetilde{p}}(\Omega;\R_+)$ and set $\bm{X}=(X^{(1)},X^{(2)},X^{(3)})$. Then the process $\{\bm{R}_t^{\bm{X}}\}_{t \ge 0}$ is $p$-weakly ergodic.
	%
	%
\end{thm}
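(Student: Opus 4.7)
Given the work already done in Sections~\ref{sec:CIRwe} and~\ref{subsec:twofactor}, the short-rate layer $R_t$ is the only piece left, and the plan is to propagate the limit distribution obtained for the pair $(v_t,\theta_t)$ upward through the third equation of \eqref{Our_Model_true_intro} by the same scheme used at the two lower layers of the cascade: derive uniform higher-moment estimates, prove tightness of the marginal laws $\{\mathcal{L}(\bm{R}_t^{\bm{X}})\}_{t\ge 0}$ in the $p$-Wasserstein topology, and then combine a synchronous coupling with a Yamada--Watanabe type estimate on the non-Lipschitz diffusion to obtain exponential contraction between any two copies driven by the same $(v,\theta)$. Once uniqueness of the invariant measure on $\R_+^3$ is secured via this coupling, $W_p$-convergence of $\bm{R}_t^{\bm{X}}$ will follow by the standard tight-plus-Feller argument on the Wasserstein space recalled in Section~\ref{S:Preliminaries}.

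\textbf{Main technical steps.} Applying It\^{o}'s formula to $R_t^{p}$ and using Young's inequality gives an estimate of the form
\begin{equation*}
\frac{d}{dt}\E[R_t^{p}]\le -pk\,\E[R_t^{p}] + C_{1}\E[\theta_t^{p}] + C_{2}\E[v_t^{p}],
\end{equation*}
so that the uniform $L^{p'}$ bounds on $(v_t,\theta_t)$ already provided by the two-factor analysis, together with the assumption $X^{(3)}\in L^{\widetilde{p}}$, yield $\sup_{t\ge 0}\E[R_t^{p}]<\infty$ and hence tightness in $W_p$. For the coupling step, let $R_t$ and $R_t'$ solve the third equation of \eqref{Our_Model_true_intro} driven by the same Brownian motion $W^{(1)}$ and the same pair $(v_t,\theta_t)$, with different initial data; then $\Delta_t:=R_t-R_t'$ satisfies a linear-drift SDE whose diffusion coefficient obeys $|\sqrt{|v_tR_t|}-\sqrt{|v_tR_t'|}|^{2}\le v_t|\Delta_t|$, so that
\begin{equation*}
\frac{d}{dt}\E[|\Delta_t|^{p}]\le -pk\,\E[|\Delta_t|^{p}] + \frac{p(p-1)\alpha^{2}}{2}\E[v_t|\Delta_t|^{p-1}].
\end{equation*}
A H\"{o}lder step with the chain of exponents $p<\overline{p}<p'$ and the uniform moment bounds on $v_t$ produce a contractive inequality that, after Gronwall, gives $\E[|\Delta_t|^{p}]\le C e^{-\lambda t}$.

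\textbf{Main obstacle.} The principal difficulty is, as expected, the non-Lipschitz diffusion $\sqrt{|v_tR_t|}$, which prevents a direct application of the classical dissipative machinery of \cite[Section~4.2.1]{kulik2017ergodic}; this is precisely why the Yamada--Watanabe inequality must be paired with moment control on $v_t$, and why the tower of exponents $p<\overline{p}<p'<\widetilde{p}$ appears in the statement. Each inequality in the tower records one H\"{o}lder step needed to convert the bilinear factor $v\cdot R$ inside the square root into a quantity controllable by $|\Delta_t|^{p}$ alone, and the outermost requirement $\widetilde{p}=\max\{2\overline{p}-1,p'/(p'-2\overline{p}+1)\}$ reflects the compounded cost of propagating this control through both the $\theta$ and the $R$ layers. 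A secondary subtlety is that the three driving Brownian motions are correlated, but since the correlation matrix is common to the two copies the synchronous coupling argument is not affected; once contraction and tightness are in place, glueing the $W_p$-convergence at the $(v,\theta)$ layer from Section~\ref{subsec:twofactor} with the conditional contraction at the $R$ layer -- using the topological structure of Wasserstein spaces recalled in Section~\ref{S:Preliminaries} -- will conclude the proof.
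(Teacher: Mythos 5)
Your overall architecture (uniform moment bounds $\to$ tightness in $\cP_p$ $\to$ a coupling/contraction step $\to$ uniqueness of the invariant measure $\to$ Wasserstein convergence) matches the paper's, and your first displayed moment inequality is essentially Lemmas~\ref{lem:FpR}--\ref{lem:upGpR} and Theorem~\ref{lem:FpRglob}. The gap is in the contraction step. From
\begin{equation*}
\frac{d}{dt}\E[|\Delta_t|^{p}]\le -pk\,\E[|\Delta_t|^{p}]+\frac{p(p-1)\alpha^{2}}{2}\E\left[v_t|\Delta_t|^{p-1}\right]
\end{equation*}
you claim that ``a H\"older step'' plus Gronwall yields $\E[|\Delta_t|^{p}]\le Ce^{-\lambda t}$. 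It does not: any H\"older or Young splitting of $\E[v_t|\Delta_t|^{p-1}]$ leaves a forcing term such as $C_\varepsilon\,\E[v_t^{p}]$, which is uniformly bounded but does \emph{not} decay, so Gronwall only gives $\sup_t\E[|\Delta_t|^{p}]<\infty$. (The induction that works for the one-factor process in Theorem~\ref{thm:weakergovp} relies on $(\sqrt{v^X}-\sqrt{v^Y})^2\le|\Delta^v|$, producing the \emph{decaying} quantity $\E[|\Delta^v_s|^{p-1}]$; here the extra factor $v_t$ destroys that structure.) This is precisely why the paper proves only a \emph{first}-moment contraction for the $R$-layer (Lemma~\ref{lem:boundcondexpR}), where the It\^o second-order term is eliminated by a conditional-expectation and sign-preservation argument, and then upgrades $\cW_1$-convergence to $\cW_p$-convergence via uniform integrability of $(p+\varepsilon)$-moments (Theorem~\ref{thm:equivalentWass} and Remark~\ref{rmk:suffcond}), rather than attempting a $p$-th moment contraction.

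The second, related gap is in the ``glueing.'' Your synchronous coupling contracts two copies of $R$ only when they are driven by the \emph{same} trajectory $(v_t,\theta_t)$; it therefore cannot directly compare $\bm{R}_t^{\bm{X}}$ for general $\bm{X}$ with a stationary copy, whose $(v,\theta)$-marginal is $\bm{\pi}^{v,\theta}$ rather than ${\rm Law}(X^{(2)},X^{(3)})$, and it does not by itself yield uniqueness of the invariant measure of the three-factor process. The paper bridges this in three moves you would still need to supply: (i) for initial data with ${\rm Law}(X^{(2)},X^{(3)})=\bm{\pi}^{v,\theta}$, a Cauchy-sequence argument in $\cW_1$ using the representation-space property (Item (III) of Proposition~\ref{prop:repspace}) to build a time-shifted copy sharing the same $(v,\theta)$-component (Theorem~\ref{thm:weakergo1R}); (ii) uniqueness of the invariant measure via the Kolmogorov equation for measures and the Feller property (Proposition~\ref{prop:uniquenessR}), not via the coupling; (iii) for general $\bm{X}$, relative compactness of the laws in $\cP_p$ (Remark~\ref{rmk:boundcomp}) combined with (ii) to identify every subsequential limit with $\bm{\pi}$. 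Without these, the passage from the conditional contraction to the stated ergodicity is not justified.
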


Let us underline that the interest on such a problem is not only related to the financial aspects of this model, but also to its mathematical structure. Indeed, the strategy we present here clearly applies also to Chen's model and we think it is related to a much more general structure. More precisely, we aim to generalize in future work the $p$-weak ergodicity result to more general systems of stochastic differential equations with a \textit{leader-follower} structure, i.e.
\begin{equation*}
	\begin{cases}
		dX_t=F_X(t,X_t)dt+\sigma_X(t,X_t)dW^{(1)}_t\\
		dY_t=F_Y(t,X_t,Y_t)dt+\sigma_X(t,X_t,Y_t)dW^{(2)}_t,
	\end{cases}
\end{equation*}
where $W_t=(W^{(1)}_t,W^{(2)}_t)$ is a $d_1+d_2$-dimensional, possibly correlated, Brownian motion, $F_X,F_Y,\sigma_X,\sigma_Y$ are sufficiently regular to guarantee global existence and pathwise uniqueness of strong solutions and satisfy a drift-dissipation condition, i.e.
\begin{align}\label{eq:driftdiss}
	\begin{split}
(F_X(t,x_1)-F_X(t,x_2))(x_1-x_2)&<-C|x_1-x_2|^2	\\
(F_Y(t,x,y_1)-F_Y(t,x,y_2))(y_1-y_2)&<-C|y_1-y_2|^2,		
\end{split}
\end{align}
where $C>0$ is a constant. Clearly, since we are not assuming the Lipschitz condition, some additional regularity properties of the coefficients will be required.

\section{Preliminaries and notation} \label{S:Preliminaries}

Before proceeding, let us establish the probability space upon which all the involved processes and random variables will be defined. Given our interest in the distributional properties of certain stochastic processes, the choice of a specific filtered probability space as a framework will not compromise the generality of the results.

\begin{prop}\label{prop:repspace}
	There exists a complete filtered probability space $(\Omega, \cF, \{\cF_t\}_{t \ge 0}, \bP)$ satisfying the usual hypotheses with the following properties:
	\begin{itemize}
		\item[(I)] It supports a $\cF_t$-adapted correlated Brownian motion $W=\{(W^{(1)}_t,W^{(2)}_t,W^{(3)}_t), t \ge 0\}$ whose correlation matrix is given by \eqref{eq:corrmat}.
		\item[$(ii)$] For any $\mu \in \cP(\R)$ there exists a $\cF_0$-measurable random variable $X:\Omega \to \R$ with ${\rm Law}(X)=\mu$.
		\item[(III)] Let $d_1,d_2 \ge 1$, $\mu \in \cP_p(\R^{d_1})$, $\nu \in \cP_p(\R^{d_2})$ and $\pi \in \Pi(\mu,\nu)$. Assume that $X$ is a $\cF_0$-measurable $\R^{d_1}$-valued random variable with ${\rm Law}(X)=\mu$. Then there exists a $\cF_0$-measurable $\R^{d_2}$-valued random variable $Y$ with ${\rm Law}(Y)=\nu$ and such that ${\rm Law}(X,Y)=\pi$.
	\end{itemize}
\end{prop}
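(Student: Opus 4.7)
The plan is to construct $\Omega$ as a product $\Omega=\Omega_1\times\Omega_2$, where $\Omega_2=C([0,\infty);\R^3)$ carries the three-dimensional Wiener measure and supports a standard Brownian motion $B$, and $\Omega_1$ is a sufficiently rich atomless probability space. The filtration $\{\F_t\}_{t\ge 0}$ is then the $\bP$-completion of the right-continuous augmentation of $\F_0\vee\sigma(B_s:s\le t)$, where $\F_0$ is the completed $\sigma$-algebra on $\Omega_1$ lifted to the product. With this setup, $B$ is independent of $\F_0$ and the usual hypotheses hold automatically.

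Property (I) then follows by Cholesky: letting $\Sigma=LL^\top$ factor the prescribed correlation matrix in \eqref{eq:corrmat}, I set $W_t:=LB_t$, and L\'evy's characterization identifies $W$ as an $\F_t$-adapted continuous martingale with $d\langle W^{(i)},W^{(j)}\rangle_t = \Sigma_{ij}\,dt$, i.e.\ the required correlated Brownian motion. Property (ii) is immediate: atomlessness of $\Omega_1$ provides an $\F_0$-measurable $U\sim\mathrm{Uniform}[0,1]$, and for any $\mu\in\cP(\R)$ the quantile transform $X:=F_\mu^{-1}(U)$ has law $\mu$.

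The heart of the proof is property (III). Since $\R^{d_1}$ and $\R^{d_2}$ are Polish, $\pi$ disintegrates as $\pi(dx,dy)=\mu(dx)K(x,dy)$ for a Borel Markov kernel $K$, and standard kernel-representation arguments on Polish targets yield a Borel map $\Phi:\R^{d_1}\times[0,1]\to\R^{d_2}$ with $\Phi(x,\cdot)_{\ast}\lambda=K(x,\cdot)$ for $\mu$-a.e.\ $x$. By the transfer theorem (see e.g.\ Kallenberg's \textit{Foundations of Modern Probability}), the proof reduces to exhibiting a uniform-$[0,1]$ variable $V$ that is $\F_0$-measurable and independent of the given $X$; then $Y:=\Phi(X,V)$ is automatically $\F_0$-measurable with $\mathrm{Law}(X,Y)=\pi$.

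The main obstacle, and what fixes the concrete choice of $\Omega_1$, is the construction of this $V$ for an \emph{arbitrary} $\F_0$-measurable $X$. A standard Borel choice (say $\F_0\cong\B([0,1])$ modulo completion) fails, because $X$ may then generate essentially all of $\F_0$ and no independent uniform can exist. I would resolve this by taking $\Omega_1=[0,1]^I$ with product Lebesgue measure for an uncountable index set $I$: any $\F_0$-measurable random variable with values in the Polish space $\R^{d_1}$ factors, up to $\bP$-null sets, through a countable subproduct of coordinates, so uncountably many coordinates of $[0,1]^I$ remain independent of $X$ and one of them supplies the required $V$. This non-separable design of $\F_0$ is the key decision making the three properties hold simultaneously and is the delicate point of the whole construction.
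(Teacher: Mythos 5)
Your construction follows the same blueprint as the paper's: a product of the canonical Wiener space with an auxiliary atomless factor, a Cholesky-type factorization of $\Sigma$ for (I) (the paper writes the matrix $\mathcal{R}$ with $\mathcal{R}\mathcal{R}^{\rm t}=\Sigma$ explicitly), and the quantile transform of a uniform variable for (ii). The genuine divergence is in (III). The paper takes the auxiliary factor to be $\Omega''=[0,1]$ with Lebesgue measure, disintegrates $\pi$ as $\pi(dx,dy)=\nu(dy|x)\,\mu(dx)$, and sets $Y(\omega)=F^{\leftarrow}_{\nu(\cdot|X(\omega))}(\omega'')$, i.e.\ it feeds the \emph{same} uniform coordinate $\omega''$ into the conditional quantile transform. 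You instead insist on a uniform $V$ that is $\cF_0$-measurable and \emph{independent of the given} $X$, and you enlarge the auxiliary factor to $[0,1]^I$ with $I$ uncountable so that such a $V$ always exists (any product-measurable $\R^{d_1}$-valued map depends, mod null sets, on countably many coordinates). Your extra care is not cosmetic: in the paper's space every $\cF_0$-measurable $X$ is, up to null sets, a function of $\omega''$, so the uniform used to sample $Y$ conditionally is correlated with $X$ and the identity ${\rm Law}(X,Y)=\pi$ can fail — take $X(\omega)=\omega''$, $\mu=\nu$ uniform on $[0,1]$ and $\pi=\mu\otimes\nu$: the recipe returns $Y=X$, whose joint law with $X$ sits on the diagonal rather than being the product. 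Thus the paper's argument really only handles initial data carrying enough residual randomness (e.g.\ $X$ itself obtained by the quantile construction for a suitably "spread" coupling), whereas your non-separable product delivers (III) for arbitrary $\cF_0$-measurable $X$, at the price of a non-standard-Borel $\cF_0$ and an appeal to the countable-dependence property of product $\sigma$-algebras. Both routes rest on the same disintegration/randomization lemma; yours is the one that proves the statement as written.
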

The proof is given in  \ref{app:reprspace}. Throughout the paper, we assume that all the involved processes and random variables are defined on the complete filtered probability space $(\Omega, \cF, \{\cF_t\}_{t \ge 0}, \bP)$ obtained in Proposition~\ref{prop:repspace}.
Given $E \in \cF$, we denote by $\mathbf{1}_E$ the indicator function of the event $E$. For any $T>0$, we denote by $M^2_T(\Omega)$ the space of progressively measurable (with respect to $\{\cF_t\}_{t \ge 0}$) stochastic processes $\{X_t\}_{t \ge 0}$ such that $\int_0^T\E[|X_t|^2]ds<\infty$. We also denote $\R_+:=(0,+\infty)$ and $\R_{+,0}:=[0,+\infty)$. Furthermore, $\cB(\R^d)$ will denote the Borel $\sigma$-algebra over $\R^d$, where $d \in \N$. Given a random variable $X:\Omega \to \R^d$, ${\rm Law}(X)$ will be the probability measure on $(\R^d, \cB(\R^d))$ defined as
\begin{equation*}
{\rm Law}(X)(A)=\bP(X \in A), \ \forall A \in \cB(\R^d).
\end{equation*}

\subsection{The $CIR^3$ model and its generator} \label{S:Model}


\noindent Let us introduce the model under consideration. We define the $CIR^3$ model as the pathwise unique global strong solution $\{\bm{R}_t\}_{t \ge 0}=\{(R_t,\theta_t,v_t)\}_{t \ge 0}$ of the system of SDEs
\begin{equation} \label{Our_Model_true}
	\begin{cases}
		dR_t=k(\theta_t-R_t)dt+\alpha \sqrt{|v_tR_t|}dW^{(1)}_t \\
		d\theta_t=k_\theta(\zeta-\theta_t)dt+\alpha\beta\sqrt{|v_t\theta_t|}dW^{(2)}_t \\
		dv_t=k_v(\eta-v_t)dt+\gamma\sqrt{|v_t|}dW^{(3)}_t,
	\end{cases}
\end{equation}
with given initial data $(R_0,\theta_0,v_0)\in \R_+^3$, where $k,k_\theta,k_v,\alpha,\beta,\gamma,\zeta,\eta>0$ and $\{W_t\}_{t \ge 0}=\{(W^{(1)}_t,W^{(2)}_t,W^{(3)}_t)\}_{t \ge 0}$ is a $3$-dimensional correlated Brownian motion adapted to $\{\cF_t\}_{t \ge 0}$ whose infinitesimal correlation matrix is given by
\begin{equation}\label{eq:corrmat}
	\Sigma=\begin{pmatrix} 1 & \rho_\theta & \rho_v \\
		\rho_\theta & 1 & 0 \\
		\rho_v & 0 & 1 \end{pmatrix}
\end{equation}
and the correlation coefficients satisfy $\overline{\rho}:=1-\rho_\theta^2-\rho_v^2>0$.

\noindent The $CIR^3$ model represents a modified Chen-type model in which the variance of the mean process $\theta_t$ is proportional to the variance of the process itself $R_t$ and the noise are correlated. This is a realistic hypothesis. The existence and pathwise uniqueness of a strong solution for \eqref{Our_Model_true} has been shown in \cite[Theorem $1.1$]{Ascione2024Jan}, together with the Feller property. In the following result, we only specify the generator of the Feller processes $\{\bm{R}_t\}_{t \ge 0}$ and $\{\bm{\theta}_t\}_{t \ge 0}:=\{(\theta_t,v_t)\}_{t \ge 0}$.
\begin{prop}\label{prop:RFeller}
	The strong solution $\{\bm{R}_t\}_{t \ge 0}$ of \eqref{Our_Model_true} is a Feller process with generator $\mathcal{G}_{\bm{R}}$ defined on $C^\infty_c(\R_{+}^3)$ as
 \begin{equation}\label{eq:gen3}
 \mathcal{G}_{\bm{R}}\varphi(R,\theta,v)=\mathbf{b}(R,\theta,v)\nabla \varphi(R,\theta,v)+\frac{1}{2}{\rm Tr}\left(\bm{\sigma}^{\bm{t}}(R,\theta,v)\nabla^2\varphi(R,\theta,v)\bm{\sigma}(R,\theta,v)\right),
\end{equation}
where, for a matrix $\bm{A}$, $\bm{A}^{\bm{t}}$ denotes its transposed, and, for $(R,\theta,v) \in \R_+^3$,
 \begin{equation}\label{eq:drfitdiff}
		\mathbf{b}(R,\theta,v)=\begin{pmatrix} k(\theta-R) \\ k_\theta(\zeta-\theta) \\ k_v(\eta-v) \end{pmatrix} \ \mbox{ and } \
		\bm{\sigma}(R,\theta,v)=\begin{pmatrix} 
			\alpha \sqrt{\overline{\rho}vR} & \rho_\theta \alpha \sqrt{vR} & \rho_v \alpha \sqrt{vR} \\
			0 & \alpha \beta \sqrt{v\theta} & 0 \\
			0 & 0 & \gamma \sqrt{v}
		\end{pmatrix}.
\end{equation}
Furthermore, also $\{\bm{\theta}_t\}_{t \ge 0}$ is a Feller process with generator $\mathcal{G}_{\bm{\theta}}$ defined on $C^\infty_c(\R_{+}^2)$ as
 \begin{equation}\label{eq:gen2}
 \mathcal{G}_{\bm{\theta}}\varphi(\theta,v)=\mathbf{b}_{\bm{\theta}}(\theta,v)\nabla \varphi(\theta,v)+\frac{1}{2}{\rm Tr}\left(\bm{\sigma}_{\bm{\theta}}^{\bm{t}}(\theta,v)\nabla^2\varphi(\theta,v)\bm{\sigma}_{\bm{\theta}}(\theta,v)\right),
\end{equation}
where, for $(\theta,v) \in \R_+^2$,
 \begin{equation}\label{eq:drfitdiff}
		\mathbf{b}_{\bm{\theta}}(\theta,v)=\begin{pmatrix} k_\theta(\zeta-\theta) \\ k_v(\eta-v) \end{pmatrix} \ \mbox{ and } \
		\bm{\sigma}_{\bm{\theta}}(\theta,v)=\begin{pmatrix} 			
			\alpha \beta \sqrt{v\theta} & 0 \\
			     0 & \gamma \sqrt{v}
		\end{pmatrix}.
\end{equation}
\end{prop}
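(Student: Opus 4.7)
The plan is to take the Feller property of $\{\bm{R}_t\}_{t \ge 0}$ as given by \cite[Theorem 1.1]{Ascione2024Jan} and reduce the proof to identifying the action of its generator on $C^\infty_c(\R_+^3)$. By the standard characterization for Feller semigroups, it suffices to show that for every $\varphi \in C^\infty_c(\R_+^3)$ the Cesaro mean $t^{-1}(P_t\varphi-\varphi)$ converges in sup norm to the claimed expression $\mathcal{G}_{\bm{R}}\varphi$ as $t \to 0^+$.

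First I would apply It\^o's formula to $\varphi(\bm{R}_t)$. Since the positivity result of \cite{Ascione2024Jan} keeps $\bm{R}_t$ inside $\R_+^3$, the absolute values inside the square roots can be dropped. Using the quadratic covariations induced by $\Sigma$, namely
\begin{align*}
d[R,R]_s &= \alpha^2 v_s R_s\,ds, \quad d[\theta,\theta]_s = \alpha^2\beta^2 v_s \theta_s\,ds, \quad d[v,v]_s = \gamma^2 v_s\,ds,\\
d[R,\theta]_s &= \rho_\theta\alpha^2\beta\, v_s\sqrt{R_s\theta_s}\,ds,\quad d[R,v]_s = \rho_v\alpha\gamma\, v_s\sqrt{R_s}\,ds,\quad d[\theta,v]_s = 0,
\end{align*}
I obtain the semimartingale decomposition
\begin{equation*}
\varphi(\bm{R}_t)-\varphi(\bm{R}_0) = \int_0^t \Big(\mathbf{b}\cdot\nabla\varphi + \tfrac12\,{\rm Tr}\big(a\,\nabla^2\varphi\big)\Big)(\bm{R}_s)\,ds + M_t,
\end{equation*}
where $a(R,\theta,v)$ is the $3\times 3$ symmetric matrix whose entries are read off the covariations above and $M_t$ is a local martingale. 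A direct computation using $\overline{\rho}=1-\rho_\theta^2-\rho_v^2$ shows that the matrix $\bm{\sigma}$ in the statement satisfies $\bm{\sigma}\bm{\sigma}^{\bm{t}}=a$; by the cyclic property of the trace this gives exactly the displayed form \eqref{eq:gen3} for the integrand.

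Next, since $\varphi$ has compact support $K\subset\R_+^3$ and the coefficients $\mathbf{b},\bm{\sigma}$ are continuous, $\mathcal{G}_{\bm{R}}\varphi\in C_c(\R_+^3)\subset C_\infty(\R_+^3)$, the stochastic integrals are true martingales (their integrands vanish off $K$), and taking expectations yields
\begin{equation*}
P_t\varphi(\bm{x})-\varphi(\bm{x}) = \int_0^t P_s(\mathcal{G}_{\bm{R}}\varphi)(\bm{x})\,ds.
\end{equation*}
Because the Feller semigroup is strongly continuous on $C_\infty(\R_+^3)$ and $\mathcal{G}_{\bm{R}}\varphi\in C_\infty$, the map $s\mapsto P_s(\mathcal{G}_{\bm{R}}\varphi)$ is norm-continuous at $0$, so the Cesaro mean converges in sup norm to $\mathcal{G}_{\bm{R}}\varphi$. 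This places $\varphi$ in the domain of the infinitesimal generator and identifies it with $\mathcal{G}_{\bm{R}}\varphi$.

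For $\{\bm{\theta}_t\}_{t \ge 0}$ I would note that the last two equations of \eqref{Our_Model_true} form a closed subsystem in $(\theta,v)$, so $\{\bm{\theta}_t\}$ is itself the strong solution of a 2D SDE of the same type. Its Feller property follows from the same strategy used in \cite{Ascione2024Jan} restricted to this subsystem, and its generator formula \eqref{eq:gen2} is obtained by an identical It\^o-based argument in one dimension fewer, in which the matrix identity reduces to the diagonal statement $\bm{\sigma}_{\bm{\theta}}\bm{\sigma}_{\bm{\theta}}^{\bm{t}}={\rm diag}(\alpha^2\beta^2 v\theta,\gamma^2 v)$ since $[\theta,v]\equiv 0$. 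The main (but routine) technical step throughout is the matrix identity $\bm{\sigma}\bm{\sigma}^{\bm{t}}=a$; the only non-computational ingredient is the strong continuity of the Feller semigroup on $C_\infty$, which is automatic once the Feller property is known.
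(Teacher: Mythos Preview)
Your proposal is correct and follows essentially the same approach as the paper: both take the Feller property from \cite[Theorem 1.1]{Ascione2024Jan} and identify the generator by computing the diffusion matrix. The only cosmetic difference is that the paper rewrites the correlated Brownian motion as $W^{(1)}_t=\sqrt{\overline{\rho}}\,\widetilde{W}^{(1)}_t+\rho_\theta W^{(2)}_t+\rho_vW^{(3)}_t$ with $(\widetilde{W}^{(1)},W^{(2)},W^{(3)})$ standard, thereby reading off $\bm{\sigma}$ directly from the resulting SDE, whereas you compute the covariations first and then verify the factorization $\bm{\sigma}\bm{\sigma}^{\bm t}=a$; your treatment of the domain membership via strong continuity of the semigroup is more explicit than the paper's, which simply asserts the form.
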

\begin{proof}
The fact that $\{\bm{R}_t\}_{t \ge 0}$ is a Feller process has been already proven in \cite[Theorem 1.1]{Ascione2024Jan}. To prove \eqref{eq:gen3}, it is sufficient to observe that we have $W^{(1)}_t=\sqrt{\overline{\rho}}\widetilde{W}^{(1)}_t+\rho_\theta W^{(2)}_t+\rho_vW^{(3)}_t$, where $(\widetilde{W}^{(1)}_t,W^{(2)}_t,W^{(3)}_t)$ is a standard Brownian motion, while \eqref{eq:gen2} follows once we observe that $(W^{(2)}_t,W^{(3)}_t)$ is already a standard two-dimensional Brownian motion.
\end{proof}
Let us stress that \eqref{Our_Model_true} satisfy the drift-dissipativity condition \eqref{eq:driftdiss}, in the sense that setting $b_R(\theta,R)=k(\theta-R)$, it holds
\begin{align*}
	(\bm{b}_{\bm{\theta}}(\theta_1,v_1)-\bm{b}_{\bm{\theta}}(\theta_2,v_2))(\theta_1-\theta_2,v_1-v_2)& \le -\min\{k_v,k_\theta\}|(\theta_1-\theta_2,v_1-v_2)|^2\\
	(b_R(\theta,R_1)-b_R(\theta,R_2))(R_1-R_2)&\le k(R_1-R_2)^2.
\end{align*}
The classical drift-dissipativity condition, as in \cite[Section 4.2.1]{kulik2017ergodic}, is also satisfied if $k<4k_\theta$. Indeed, in such a case, we can set $\varepsilon=\frac{1}{2}\left(1+\sqrt{\frac{k}{2k_\theta}}\right)$ and get, by Young's inequality
\begin{align*}
	(\bm{b}(R_1,&\theta_1,v_1)-\bm{b}(R_2,\theta_2,v_2))(R_1-R_2,\theta_1-\theta_2,v_1-v_2)\\
	&=-k_v(v_1-v_2)^2-k_\theta(\theta_1-\theta_2)^2-k(R_1-R_2)^2+k(\theta_1-\theta_2)(R_1-R_2)\\
	&\le -k_v(v_1-v_2)^2-\left(k_\theta-\frac{k}{2\varepsilon^2}\right)(\theta_1-\theta_2)^2-k\left(1-\frac{\varepsilon^2}{2}\right)(R_1-R_2)^2\\
	&\le -\min\left\{k_v,k_\theta-\frac{k}{2\varepsilon^2},k\left(1-\frac{\varepsilon^2}{2}\right)\right\}|(R_1-R_2,\theta_1-\theta_2,v_1-v_2)|^2.
\end{align*}
This, however, is not enough to argue as in \cite[Section 4.2.1]{kulik2017ergodic} since $\bm{\sigma}$ is not Lipschitz-continuous.
\subsection{$p$-Wasserstein distances and $p$-weak ergodicity}

Now let us introduce some notation on Wasserstein distances.

For any $d \ge 1$, let $\cP(\R^d)$ be the set of all probability measures on $\R^d$. For any two measures $\mu \in \cP(\R^{d_1})$ and $\nu \in \cP(\R^{d_2})$, we say that $\pi \in \cP(\R^{d_1+d_2})$ is a transport plan of $\mu$ and $\nu$ if for any $A_j \in \cB(\R^{d_j})$ with $j=1,2$, $\pi(A_1 \times \R^{d_2})=\mu(A)$ and $\pi(\R^{d_1} \times A_2)=\nu(A_2)$, i.e. if $\pi$ admits $\mu$ and $\nu$ as marginals. The set of all transport plans of $\mu$ and $\nu$ will be denoted as $\Pi(\mu,\nu)$. A coupling of $\mu$ and $\nu$ is a $\R^{d_1+d_2}$-valued random variable $(X,Y)$ such that ${\rm Law}(X)=\mu$ and ${\rm Law}(Y)=\nu$. The set of couplings of $\mu$ and $\nu$ is denoted by $C(\mu,\nu)$. Clearly, any coupling $(X,Y)\in C(\mu,\nu)$ defines a transport plan $\pi={\rm Law}(X,Y)$, while, by Skorokhod representation theorem, any transport plan $\pi \in \Pi(\mu,\nu)$ admits at least a coupling. For $p \ge 1$ and $\mu \in \cP(\R^d)$, we denote
\begin{equation*}
	M_p(\mu)=\int_{\R^d}|x|^pd\mu(x)
\end{equation*}
and we define
\begin{equation*}
\cP_p(\R^d):=\{\mu \in \cP(\R^d): \ M_p(\mu)<\infty\}.
\end{equation*}
On $\cP_p(\R^d)$ we can define the following metric:
\begin{equation*}
	\cW_p(\mu,\nu):=\inf_{\pi \in \Pi(\mu,\nu)}\left(\int_{\R^{2d}}|x-y|^pd\pi(x,y)\right)^{\frac{1}{p}}=\inf_{(X,Y) \in C(\mu,\nu)}\left(\E[|X-Y|^p]\right)^{\frac{1}{p}},
\end{equation*}
that is called the $p$-Wasserstein distance on $\cP_p(\R^d)$. The metric space $(\cP_p(\R^{2d}),\cW_p)$ is called the $p$-Wasserstein space and it is a complete separable metric space (see \cite[Theorem 6.18]{villani2009optimal}). Now let $\{\mu_k\}_{k \ge 0} \subseteq \cP(\R^d)$ and $\mu \in \cP(\R^d)$. We say that $\mu_k$ converges towards $\mu$ weakly in $\cP(\R^d)$ if for any $\varphi \in C_b(\R^d)$, where we denote by $C_b(\R^d)$ the space of bounded continuous functions, we have
\begin{equation*}
\lim_{k \to +\infty}\int_{\R^d}\varphi(x)d\mu_k(x)=\int_{\R^d}\varphi(x)d\mu(x).
\end{equation*}
This will be denoted by $\mu_k \overset{\cP}{\to}\mu$. If $\{\mu_k\}_{k \ge 0} \subseteq \cP_p(\R^d)$ and $\mu \in \cP_p(\R^d)$, we say that $\mu_k$ converges towards $\mu$ weakly in $\cP_p(\R^d)$ if and only if $\mu_k \overset{\cP}{\to} \mu$ and $\lim_{k \to +\infty}M_p(\mu_k)=M_p(\mu)$. Such convergence will be denoted as $\mu_k \overset{\cP_p}{\to} \mu$. Let $\mathcal{M}\subset \cP(\R^d)$ be a family of probability measures. We say that $\mathcal{M}$ is \textit{equitight} if for any $\varepsilon>0$ there exists a compact set $K_\varepsilon \subset \R^d$ such that
\begin{equation*}
	\mu(\R^d \setminus K_\varepsilon)<\varepsilon \qquad \forall \mu \in \mathcal{M}.
\end{equation*}
We say that $\mathcal{M} \subset \cP_p(\R^d)$ has \textit{uniformly integrable $p$-moments} if
\begin{equation*}
	\lim_{R \to +\infty}\sup_{\mu \in \mathcal{M}}\int_{|x| \ge R}|x|^pd\mu(x)=0.
\end{equation*}
We will make use of the following result (see \cite[Definition 6.8 and Theorem 6.9]{villani2009optimal}).
\begin{thm}\label{thm:equivalentWass}
	Let $p \ge 1$, $\{\mu_k\}_{k \ge 0}\subseteq \cP_p(\R^d)$ and $\mu \in \cP(\R^d)$. The following properties are equivalent:
	\begin{itemize}
		\item[(i)] $\mu_k \overset{\cP_p}{\to} \mu$;
		\item[(ii)] $\mu_k \overset{\cP}{\to} \mu$ and
		\begin{equation}\label{eq:uniint}
			\lim_{R \to +\infty}\limsup_{k \to +\infty}\int_{|x| \ge R}|x|^pd\mu_k(x)=0;
		\end{equation}
		\item[(iii)] $\lim_{k \to +\infty}\cW_p(\mu_k,\mu)=0$. 
	\end{itemize}
\end{thm}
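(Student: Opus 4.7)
The plan is to exploit the cascade structure of \eqref{Our_Model_true} and combine the outputs of Sections \ref{sec:CIRwe} and \ref{subsec:twofactor} into a Krylov--Bogolyubov-type argument supplemented by a uniqueness step at the top layer. By the time we reach Section \ref{sec:finalsec}, we have in hand the $\widetilde{p}$-weak ergodicity of the single-factor CIR process $\{v_t\}_{t\ge 0}$ toward its Gamma-type invariant law $\mu_v$ (from Section \ref{sec:CIRwe}), together with the existence of a unique invariant law $\mu_{\bm{\theta}}$ for the two-factor subsystem $\{\bm{\theta}_t\}_{t\ge 0}=\{(\theta_t,v_t)\}_{t\ge 0}$ and the convergence of $\mathrm{Law}(\bm{\theta}_t)$ to $\mu_{\bm{\theta}}$ in $\cW_{\overline{p}}$ (from Section \ref{subsec:twofactor}). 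The nested thresholds $\overline{p}>2p-1$ and $p'>2\overline{p}-1$ appear because each It\^o expansion along the cascade spends one order of integrability on account of the non-Lipschitz $\sqrt{\cdot}$-diffusion.

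The first task is to propagate uniform-in-time $p$-moment bounds to $R_t^{\bm{X}}$. Applying It\^o to $(R_t^{\bm{X}})^{q}$ for $q\in\{p,\overline{p}\}$ yields a contracting drift $-qk(R_t^{\bm{X}})^{q}$ plus cross-terms involving $\theta_t,v_t$ that Young's inequality absorbs against their $\overline{p}$-moments, themselves controlled uniformly in $t$ thanks to the $\cW_{\overline{p}}$-ergodicity of $\bm{\theta}$. The resulting bound makes $\{\mathrm{Law}(\bm{R}_t^{\bm{X}}):t\ge 0\}$ tight in $\cP_p(\R^{3})$ with uniformly integrable $p$-moments. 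By Theorem \ref{thm:equivalentWass}, any sequence $t_n\to\infty$ admits a subsequence along which $\mathrm{Law}(\bm{R}_{t_n}^{\bm{X}})\to\mu_{\bm{R}}$ in $\cW_p$. A standard Ces\`aro-mean argument on the semigroup associated to the generator $\mathcal{G}_{\bm{R}}$ of Proposition \ref{prop:RFeller} then shows that any such $\mu_{\bm{R}}$ is invariant in the sense that $\int \mathcal{G}_{\bm{R}}\varphi\,d\mu_{\bm{R}}=0$ for every $\varphi\in C_{c}^{\infty}(\R_{+}^{3})$; moreover its $(\theta,v)$-marginal must equal $\mu_{\bm{\theta}}$.

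To upgrade precompactness to full $\cW_p$-convergence it suffices to prove uniqueness of the invariant law. Disintegrating $\mu_{\bm{R}}(dR,d\theta,dv)=\mu_{\bm{\theta}}(d\theta,dv)\,\pi(dR\mid\theta,v)$, the kernel $\pi$ is characterised by the stationarity of the $R$-layer of \eqref{Our_Model_true} driven by a realised stationary $(\theta_t,v_t)$-trajectory and the Brownian motion $W^{(1)}$. Coupling two candidate stationary realisations $R_{t}^{(1)},R_{t}^{(2)}$ that share this trajectory and applying It\^o to $|R_{t}^{(1)}-R_{t}^{(2)}|^{p}$ produces a strict drift dissipation $-pk|R_{t}^{(1)}-R_{t}^{(2)}|^{p}$ (the $\theta$-cross-term vanishes since $\theta$ is common) plus an It\^o correction dominated by $Cv_{t}|R_{t}^{(1)}-R_{t}^{(2)}|^{p-1}$; taking expectations under the stationary joint law and using the time-invariance of $t\mapsto\E[|R_{t}^{(1)}-R_{t}^{(2)}|^{p}]$ forces this expectation to vanish, giving pathwise identity of the stationary copies and hence uniqueness of $\pi$. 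Uniqueness of $\mu_{\bm{R}}$ combined with the precompactness from the previous step yields $\cW_p$-convergence of $\mathrm{Law}(\bm{R}_t^{\bm{X}})$ to $\mu_{\bm{R}}$, i.e.\ the claimed $p$-weak ergodicity.

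The hardest part will be the moment bookkeeping along the cascade together with the uniqueness step, since the synchronous coupling at the $R$-layer does not contract naively: the It\^o correction $Cv_{t}|R_{t}^{(1)}-R_{t}^{(2)}|^{p-1}$ must be reabsorbed using the stationary $\overline{p}$-moments of $v_t$ in balance with the drift dissipation, a step whose H\"older exponents fix the thresholds in the statement. In particular, the dual H\"older step inside the two-factor layer pairing $v$-moments with $\theta$-moments through a conjugate exponent whose denominator is $p'-2\overline{p}+1$ is what produces the second entry of $\widetilde{p}=\max\{2\overline{p}-1,\,p'/(p'-2\overline{p}+1)\}$, while the bounds $\overline{p}>2p-1$ and $p'>2\overline{p}-1$ come from the analogous computations at the three-factor and two-factor layers respectively.
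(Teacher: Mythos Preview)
Your proposal does not address the statement at all. Theorem~\ref{thm:equivalentWass} is the standard equivalence between convergence in the $p$-Wasserstein metric, weak convergence with convergence of $p$-th moments, and weak convergence with the uniform-integrability condition \eqref{eq:uniint}. The paper does not prove it; it simply cites \cite[Definition 6.8 and Theorem 6.9]{villani2009optimal}. What you have written is instead a sketch of the proof of Theorem~\ref{thm:pweakergoR}, the $p$-weak ergodicity of the full $CIR^3$ process. None of the cascade structure, the generator $\mathcal{G}_{\bm{R}}$, the It\^o expansions of $(R_t^{\bm{X}})^q$, the Krylov--Bogolyubov argument, or the synchronous-coupling uniqueness step has anything to do with Theorem~\ref{thm:equivalentWass}.

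If you want to supply a proof of Theorem~\ref{thm:equivalentWass} rather than a citation, the content is entirely different: one shows $(iii)\Rightarrow(i)$ via the lower semicontinuity of $\cW_p(\cdot,\mu)$ and the fact that $M_p(\mu_k)\le 2^{p-1}(\cW_p(\mu_k,\mu)^p+M_p(\mu))$, $(i)\Rightarrow(ii)$ by a truncation argument showing that convergence of $M_p(\mu_k)$ together with weak convergence forces the tail condition \eqref{eq:uniint}, and $(ii)\Rightarrow(iii)$ by combining Skorokhod representation (or optimal couplings) with uniform integrability of $|x|^p$ under $\mu_k$. These are the arguments in Villani's Theorem~6.9. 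Your sketch, while broadly in the spirit of the paper's strategy for Theorem~\ref{thm:pweakergoR}, should be relocated there.
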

\begin{remark}\label{rmk:suffcond}
	Observe that \eqref{eq:uniint} is equivalent to the request that $\{\mu_k\}_{k \ge 0}$ has uniformly integrable $p$-moments. It is not difficult to check that (ii) is verified if $\mu_k \overset{\cP}{\to} \mu$ and there exists $\varepsilon>0$ such that $\sup_{k \ge 0}M_{p+\varepsilon}(\mu_k)<\infty$. Furthermore, recall that if (i), (ii) or (iii) are verified, then $\mu \in \cP_p(\R)$.
\end{remark}
We will also make use of the following characterization of relatively compact subsets of $\cP_p(\R^d)$ (see \cite[Proposition 7.1.5]{ambrosio2005gradient}).
\begin{thm}\label{thm:relcomp}
	Let $p \ge 1$ and $\mathcal{M} \subseteq \cP_p(\R^d)$. $\mathcal{M}$ is relatively compact (with respect to the metric $\cW_p$) if and only if it is equitight and has uniformly integrable $p$-moments.
\end{thm}
\begin{remark}\label{rmk:boundcomp}
    It is not difficult to check that if $\mathcal{M} \subset \cP_p(\R^d)$ is such that $\sup_{\mu \in \mathcal{M}}M_{p+\varepsilon}(\mu)<\infty$ for some $\varepsilon>0$, then $\mathcal{M}$ is relatively compact with respect to the metric $\cW_p$. Indeed, by Remark \ref{rmk:suffcond}, we know that $\mathcal{M}$ has uniformly integrable $p$-moments. Furthermore, if we consider the ball $B_\delta$ centered in the origin with radius $\delta>0$, we get, by Markov's inequality
    \begin{equation*}
    \sup_{\mu \in \mathcal{M}}\mu(\R^d \setminus B_\delta) \le \frac{\sup_{\mu \in \mathcal{\M}}M_{p+\varepsilon}(\mu)}{\delta^{p+\varepsilon}}
    \end{equation*}
    that goes to $0$ as $\delta \to +\infty$. Hence, $\mathcal{M}$ is equitight and we can apply \eqref{thm:relcomp}.
\end{remark}
For further notions on Wasserstein distances, we refer to \cite{ambrosio2005gradient,villani2009optimal}.
 
Let $p \ge 1$ and $\{X_t\}_{t \ge 0}$ be a stochastic process with $\E[|X_t|^p]<\infty$ for any $t \ge 0$. We say that $\{X_t\}_{t \ge 0}$ is weakly $p$-ergodic if there exists a probability measure $\mu_\infty \in \cP_p(\R)$ such that, setting $\mu_t={\rm Law}(X_t)$, it holds $\mu_t \overset{\cP_p}{\to}\mu_\infty$. It is clear that, as a consequence, $\mu_\infty$ is the limit distribution of $X_t$.

The next sections will be devoted to the $p$-weak ergodicity of the process $\{\bm{R}\}_{t \ge 0}$. However, the proof of Theorem~\ref{thm:pweakergoR} first requires a detailed study of the processes $\{v_t\}_{t \ge 0}$ and $\{\bm{\theta}_t\}_{t \ge 0}$, which will be carried on in what follows.
\section{The $p$-weak ergodicity of the $CIR$ process $v_t$} \label{sec:CIRwe}
Let us first focus on the process $\{v_t\}_{t \ge 0}$, that is solution to the classical CIR equation
\begin{equation}\label{eq:v}
	dv_t=k_v(\eta-v_t)dt+\gamma \sqrt{v_t}dW^{(3)}_t.
\end{equation}
The process belongs to the class of Pearson diffusions (see \cite{forman2008pearson}) and it is known that it admits a Gamma limit distribution, whose density is given by
\begin{equation*}
	m_v(x)=\left(\frac{2k_v}{\gamma^2}\right)^{\frac{2k_v\eta}{\gamma^2}}\frac{x^{\frac{2k_v\eta}{\gamma^2}-1}e^{-\frac{2k_v}{\gamma^2}x}}{\Gamma\left(\frac{2k_v\eta}{\gamma^2}\right)}, \ x>0.
\end{equation*}
Let us denote by $\mathbf{m}_v:\cB(\R) \to [0,1]$ the measure $\mathbf{m}_v(A)=\int_{A}m_v(x)dx$. In particular, if ${\rm Law}(v_0)=\mathbf{m}_v$, then, for any $t \ge 0$, ${\rm Law}(v_t)=\mathbf{m}_v$. Let us also recall that $\mathbf{m}_v$ admits finite moments of any order given, for $p \ge 1$, by
\begin{equation}\label{eq:momGamma}
M_p(\mathbf{m}_v):=\int_0^{+\infty}x^pm_v(x)dx=\frac{\Gamma\left(\frac{2k_v\eta}{\gamma^2}+p\right)}{\Gamma\left(\frac{2k_v\eta}{\gamma^2}\right)}\left(\frac{\gamma^2}{2k_v}\right)^p.
\end{equation}
The convergence in distribution of $v_t$ towards a Gamma random variable can be verified using classical spectral decomposition methods. Here we would like to discuss a different approach.

For any $\cF_0$-measurable random variable $X \in L^2(\Omega;\R_+)$, we denote by $\{v_t^{X}\}_{t \ge 0}$ the unique strong solution of \eqref{eq:v} with $v_0^{X}=X$. If $X=x$ a.s. for some $x>0$, we denote it as $v_t^x$. Let us also recall that the expectation of $v_t^X$ is given by
\begin{equation}\label{eq:expecvt}
	\E[v_t^X]=\eta+(\E[X]-\eta)e^{-k_vt}.
\end{equation}
For any $\cF_0$-measurable random variables $X,Y \in L^2(\Omega;\R_+)$ and $t \ge 0$, let $\Delta_t^{v,X,Y}=v_t^{X}-v_t^{Y}$. First, we estimate the first absolute moment of $\Delta_t^{v,X,Y}$.
\begin{lem}\label{lem:boundcondexp}
	Let $X,Y \in L^2(\Omega;\R_{+,0})$ be $\cF_0$-measurable and consider any $t>0$ and $t_0 \in [0,t)$. Then
	\begin{equation}\label{eq:condexp1}
		\E\left[\left.|\Delta_t^{v,X,Y}|\, \right| \Delta_{t_0}^{v,X,Y}\right] \le |\Delta_{t_0}^{v,X,Y}|e^{-k_v(t-t_0)}.
	\end{equation}
\end{lem}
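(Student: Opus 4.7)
The plan is to combine a Yamada--Watanabe type approximation of the absolute value with the Markov property of the CIR process. Writing $\Delta_s := \Delta_s^{v,X,Y}$ and subtracting the two copies of \eqref{eq:v} driven by the same Brownian motion, one obtains
\[
d\Delta_s = -k_v \Delta_s\, ds + \gamma\bigl(\sqrt{v_s^{X}} - \sqrt{v_s^{Y}}\bigr) dW_s^{(3)},
\]
so that $d\langle \Delta\rangle_s = \gamma^2 (\sqrt{v_s^X}-\sqrt{v_s^Y})^2 ds$, and the elementary bound $(\sqrt{a}-\sqrt{b})^2 \le |a-b|$ for $a,b \ge 0$ yields $d\langle \Delta \rangle_s \le \gamma^2|\Delta_s|\,ds$.

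Next, I would introduce the standard Yamada--Watanabe approximating sequence $\phi_n \in C^2(\R)$: even, nondecreasing on $\R_{+,0}$, with $\phi_n(r) \uparrow |r|$, $0 \le \phi_n'(r) \le 1$ for $r \ge 0$, and $\phi_n''(r) \le \frac{2}{n|r|}$ with support shrinking to $\{0\}$ (this is the sequence already used in the pathwise uniqueness argument of \cite{yamada1971uniqueness}). Applying It\^o's formula to $\phi_n(\Delta_s)$ on $[t_0,t]$, localized by stopping times $\tau_N := \inf\{s \ge t_0 : v_s^X + v_s^Y \ge N\}$ to ensure that the stochastic integral is a true martingale, and taking the conditional expectation with respect to $\mathcal{F}_{t_0}$ gives
\[
\E\bigl[\phi_n(\Delta_{t\wedge \tau_N}) \,\bigm|\, \mathcal{F}_{t_0}\bigr] = \phi_n(\Delta_{t_0}) + \E\!\left[\int_{t_0}^{t\wedge \tau_N}\!\!\left(-k_v \Delta_s \phi_n'(\Delta_s) + \tfrac{\gamma^2}{2}\phi_n''(\Delta_s)(\sqrt{v_s^X}-\sqrt{v_s^Y})^2\right) ds \,\bigg|\, \mathcal{F}_{t_0}\right].
\]
The second-order contribution is bounded in absolute value by $\frac{\gamma^2}{n}(t-t_0)$, since $\phi_n''(\Delta_s)(\sqrt{v_s^X}-\sqrt{v_s^Y})^2 \le \frac{2|\Delta_s|}{n|\Delta_s|} = \frac{2}{n}$, and thus vanishes as $n \to +\infty$. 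The drift term satisfies $-k_v \Delta_s \phi_n'(\Delta_s) \to -k_v |\Delta_s|$ pointwise and is uniformly bounded in modulus by $k_v|\Delta_s|$, so dominated convergence and the monotone convergence $\phi_n \uparrow |\cdot|$ yield, after sending first $n \to +\infty$ and then $N \to +\infty$ (justified by the finite moment estimate $\E[v_s^X + v_s^Y \mid \mathcal{F}_{t_0}] \le (X+Y) + 2\eta$ from \eqref{eq:expecvt}),
\[
\E[|\Delta_t| \mid \mathcal{F}_{t_0}] \le |\Delta_{t_0}| - k_v \int_{t_0}^{t} \E[|\Delta_s| \mid \mathcal{F}_{t_0}]\, ds.
\]

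A conditional version of Gr\"onwall's inequality then gives $\E[|\Delta_t| \mid \mathcal{F}_{t_0}] \le |\Delta_{t_0}| e^{-k_v(t-t_0)}$ almost surely. Since $\sigma(\Delta_{t_0}) \subset \mathcal{F}_{t_0}$ and $|\Delta_{t_0}|$ is $\sigma(\Delta_{t_0})$-measurable, the tower property gives
\[
\E[|\Delta_t| \mid \Delta_{t_0}] = \E\bigl[\E[|\Delta_t| \mid \mathcal{F}_{t_0}] \bigm| \Delta_{t_0}\bigr] \le |\Delta_{t_0}| e^{-k_v(t-t_0)},
\]
which is \eqref{eq:condexp1}. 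The main technical obstacle is the non-Lipschitz square-root diffusion: absolute value is not $C^2$, so a naive It\^o application fails, and one must instead carefully control the quadratic variation contribution through the $\phi_n''$ bound. Once the Yamada--Watanabe smoothing is in place, the dissipativity of the drift does the rest, and the localization is routine in view of the standard CIR moment estimates.
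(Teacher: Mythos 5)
Your route is genuinely different from the paper's. The paper never smooths the absolute value: it first establishes the exact identity $\E[\Delta_t^{v,X,Y}\mid v_{t_0}^X,v_{t_0}^Y]=\Delta_{t_0}^{v,X,Y}e^{-k_v(t-t_0)}$ (no modulus, so It\^o plus the martingale property of the stochastic integral suffices), and then passes to $|\Delta_t^{v,X,Y}|$ by exploiting the one-dimensional comparison/no-crossing property: by pathwise uniqueness the two solutions coincide after the meeting time $\tau_0^v$, so $\Delta_t^{v,X,Y}$ never changes sign, and on the events $E_\pm=\{\pm v_{t_0}^X>v_{t_0}^Y\}$ one has $|\Delta_t^{v,X,Y}|=\pm\Delta_t^{v,X,Y}\mathbf{1}_{\{\tau_0^v>t\}}$, whence the bound. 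Your Yamada--Watanabe argument replaces this order-preservation input by the quadratic-variation estimate $\phi_n''(\Delta_s)(\sqrt{v_s^X}-\sqrt{v_s^Y})^2\le 2/n$, which is exactly the right way to handle the $1/2$-H\"older diffusion and has the advantage of not relying on the scalar structure of the comparison argument (indeed it is closer to what the paper itself does later for $\theta$). Both methods are legitimate here.

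There is, however, one step that does not work as written: the final ``conditional Gr\"onwall.'' The inequality
\begin{equation*}
g(t)\le g(t_0)-k_v\int_{t_0}^{t}g(s)\,ds,\qquad g(s):=\E[|\Delta_s^{v,X,Y}|\mid\cF_{t_0}],
\end{equation*}
anchored only at $t_0$, does \emph{not} imply $g(t)\le g(t_0)e^{-k_v(t-t_0)}$, even for continuous nonnegative $g$: a function that drops quickly to $0$ and later climbs back to a moderate level can satisfy the one-sided integral inequality on a bounded window while grossly violating the exponential bound (the standard Gr\"onwall lemma treats $+k\int g$, not $-k\int g$). Two fixes are available inside your own argument. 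Either observe that the same It\^o computation on $[s,t]$ for every intermediate $s\in[t_0,t]$ yields the two-point inequality $g(t)\le g(s)-k_v\int_s^t g(u)\,du$, which together with continuity of $g$ does give the exponential decay (this is the mechanism behind Lemma \ref{lem:contbound} in the paper); or, better, note that since $\phi_n'(r)\uparrow 1$ monotonically for $r>0$, the drift term converges by monotone convergence and the limit is actually the \emph{identity} $g(t)=g(t_0)-k_v\int_{t_0}^t g(s)\,ds$, whose unique solution is $g(t)=g(t_0)e^{-k_v(t-t_0)}$. With either repair the proof is complete; the concluding tower-property step from $\cF_{t_0}$ down to $\sigma(\Delta_{t_0}^{v,X,Y})$ is correct.
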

\begin{proof}
	By \eqref{eq:v}, we have
	\begin{align*}
		\Delta_t^{v,X,Y}&=\Delta_{t_0}^{v,X,Y}-k_v\int_{t_0}^t \Delta_{s}^{v,X,Y}ds+\gamma\int_{t_0}^t(\sqrt{v_s^{X}}-\sqrt{v_s^{Y}})dW_t^{(3)}.
	\end{align*}
	Observe that
	\begin{multline*}
		\E[(\sqrt{v_s^{X}}-\sqrt{v_s^{Y}})^2] \le 2(\E[v_s^{X}]+\E[v_s^{Y}]) \\ \le 4\eta+2(\E[X]+\E[Y]-2\eta)e^{-k_vt} \le 8\eta+2(\E[X]+\E[Y])<\infty.
	\end{multline*}
	hence, for any $t \ge 0$, $\{\sqrt{v_s^{X}}-\sqrt{v_s^{Y}}\}_{s \ge 0} \in M^2_t(\Omega)$. Taking the conditional expectation we get
	\begin{align*}
		\E[\Delta_t^{v,X,Y} \mid \cF_{t_0}]&=\Delta_{t_0}^{v,X,Y}-k_v\int_{t_0}^t \E[\Delta_{s}^{v,X,Y} \mid \cF_{t_0}]ds.
	\end{align*}
	Next, we take the conditional expectation with respect to $v_{t_0}^X,v_{t_0}^Y$ and we use the tower property of the conditional expectations to achieve
	\begin{align*}
		\E[\Delta_t^{v,X,Y} \mid v_{t_0}^X,v_{t_0}^Y]&=\Delta_{t_0}^{v,X,Y}-k_v\int_{t_0}^t \E[\Delta_{s}^{v,X,Y} \mid v_{t_0}^X, v_{t_0}^Y]ds
	\end{align*}
	that implies
	\begin{equation*}
		\E[\Delta_t^{v,X,Y} \mid v_{t_0}^X,v_{t_0}^Y]=\Delta_{t_0}^{v,X,Y}e^{-k_v(t-t_0)}.
	\end{equation*}
	Now let us define the following stopping time and events
	\begin{equation}\label{eq:nulltime}
		\tau_0^v:=\inf\{t \ge t_0: \ \Delta_t^{v,X,Y}=0\} \ \mbox{ and } \ E_{\pm}=\{\omega \in \Omega: \ \pm v_{t_0}^X(\omega)>v_{t_0}^Y(\omega)\}.
	\end{equation}
	Recalling that over $E_\pm \cap \{\tau_0^v>t\}$ it holds $\pm \Delta_t^{v,X,Y}>0$, while over $\{\tau_0^v \le t\}$ it holds $\Delta_t^{v,X,Y}=0$, we have
	\begin{align*}
		\E[|\Delta_t^{v,X,Y}| \mid v_{t_0}^X,v_{t_0}^Y]&=\E[\Delta_t^{v,X,Y}\mathbf{1}_{E_+ \cap \{\tau_0^v>t\}} \mid v_{t_0}^X,v_{t_0}^Y]+\E[(-\Delta_t^{v,X,Y})\mathbf{1}_{E_- \cap \{\tau_0^v>t\}} \mid v_{t_0}^X,v_{t_0}^Y]\\
		&\le \mathbf{1}_{E_+}\E[\Delta_t^{v,X,Y} \mid v_{t_0}^X,v_{t_0}^Y]+\mathbf{1}_{E_-}\E[(-\Delta_t^{v,X,Y}) \mid v_{t_0}^X,v_{t_0}^Y]\\
		&=(\mathbf{1}_{E_+}\Delta_{t_0}^{v,X,Y}+\mathbf{1}_{E_-}(-\Delta_{t_0}^{v,X,Y}))e^{k_v(t-t_0)}=|\Delta_{t_0}^{v,X,Y}|e^{k_v(t-t_0)},
	\end{align*}
 where we used the fact that $|\Delta_{t_0}^{v,X,Y}|= \pm \Delta_{t_0}^{v,X,Y}$ on $E_\pm$.
\end{proof}
Thanks to this estimate, we can prove the $p$-weak ergodicity of $\{v_t\}_{t \ge 0}$ and we can provide a bound on the weak ergodicty rate.
\begin{thm}\label{thm:weakergovp}
Let $p \ge 1$, $X \in L^{2p-1}(\Omega; \R_{+,0}) \cap L^2(\Omega;\R_{+,0})$ be $\cF_0$-measurable and $Y$ be a $\cF_0$-measurable random variable with ${\rm Law}(Y)=\mathbf{m}_v$. Then $\{v_t^X\}$ is $p$-weakly ergodic and, setting $\mu_t^{X,v}:={\rm Law}(v_t^X)$,
\begin{equation}\label{eq:Wp}
	\cW_p(\mu_t^{X,v},\mathbf{m}_v) \le \left(\E\left[|\Delta_t^{v,X,Y}|^p\right]\right)^{\frac{1}{p}}\le (C_p^v(\E[X^p]))^{\frac{1}{p}}e^{-\frac{p-1}{p(\lceil p \rceil -1)}k_vt}
\end{equation}
where $C_p^v$ is a non-decreasing positive function depending only on $k_v,\eta,\gamma$ and $p$ and we set, for $p=1$, $\frac{p-1}{p(\lceil p \rceil -1)}:=1$.
\end{thm}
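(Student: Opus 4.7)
The plan is to combine the coupling characterization of $\cW_p$ with the first-moment contraction of Lemma \ref{lem:boundcondexp}, extended to higher moments via It\^o's formula and a Jensen interpolation against the integer case. The first inequality in \eqref{eq:Wp} is immediate: since $\mathbf{m}_v$ is invariant for the CIR semigroup, ${\rm Law}(v_t^Y)=\mathbf{m}_v$ for every $t \ge 0$, so $(v_t^X, v_t^Y)$ is a valid coupling in $C(\mu_t^{X,v}, \mathbf{m}_v)$ and the definition of $\cW_p$ gives $\cW_p(\mu_t^{X,v}, \mathbf{m}_v) \le (\E[|\Delta_t^{v,X,Y}|^p])^{1/p}$. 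For the base case $p=1$, Lemma \ref{lem:boundcondexp} at $t_0=0$ together with the tower property produces $\E[|\Delta_t^{v,X,Y}|] \le \E[|X-Y|] e^{-k_v t} \le (\E[X]+\eta) e^{-k_v t}$, so $C_1^v$ can be taken to be a non-decreasing affine function of $\E[X]$.

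For integer $p \ge 2$ I would argue by induction. Since $|x|^p$ is $C^2$, It\^o's formula applied to $|\Delta_t|^p$, combined with the elementary inequality $(\sqrt{a}-\sqrt{b})^2 \le |a-b|$, yields after taking expectations
\[
\frac{d}{dt}\E[|\Delta_t|^p] \le -pk_v \E[|\Delta_t|^p] + \tfrac{p(p-1)\gamma^2}{2} \E[|\Delta_t|^{p-1}].
\]
The stochastic integral is a true martingale thanks to the hypothesis $X \in L^{2p-1}$ combined with classical uniform-in-time $L^{2p-1}$ moment estimates for the CIR process (so that $\E[\int_0^t |\Delta_s|^{2p-1} ds] < \infty$). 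The inductive bound $\E[|\Delta_t|^{p-1}] \le C_{p-1} e^{-k_v t}$ and Gr\"onwall's lemma then give $\E[|\Delta_t|^p] \le C_p e^{-k_v t}$, which after taking the $p$-th root matches the claimed rate $1/p = \frac{p-1}{p(\lceil p \rceil - 1)}$ for integer $p$.

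For non-integer $p > 1$ with $n:=\lceil p\rceil$, the same It\^o derivation produces the same differential inequality; for $p \in (1,2)$ one first mollifies $|x|^p$ by $(x^2+\varepsilon)^{p/2}$ and passes to the limit $\varepsilon \to 0$, which is where the $L^{2p-1}$ integrability is crucial to dominate the stochastic terms and preserve the martingale property. Since $p-1 \le n-1$, Jensen's inequality and the integer-case bound just established give
\[
\E[|\Delta_t|^{p-1}] \le \E[|\Delta_t|^{n-1}]^{(p-1)/(n-1)} \le C\, e^{-\frac{p-1}{n-1} k_v t},
\]
so a second Gr\"onwall application yields $\E[|\Delta_t|^p] \le C e^{-(p-1)k_v t/(n-1)}$. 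Taking the $p$-th root produces exactly the advertised rate $\exp\bigl(-\tfrac{p-1}{p(\lceil p\rceil-1)}k_v t\bigr)$, and the monotone dependence of every constant along the chain on the initial moments of $X$ gives the claimed form of $C_p^v$ as a non-decreasing function of $\E[X^p]$.

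Finally, the bound $\cW_p(\mu_t^{X,v}, \mathbf{m}_v) \to 0$ together with (iii)$\Leftrightarrow$(i) of Theorem \ref{thm:equivalentWass} is the $p$-weak ergodicity of $\{v_t^X\}$. The main technical difficulty is the It\^o step for $p \in (1,2)$, where $|x|^p$ fails to be $C^2$ at the origin: the mollification argument must be tied to uniform integrability bounds on the stochastic integrals, which is again where the $L^{2p-1}$ hypothesis plays its decisive role.
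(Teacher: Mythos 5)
Your proposal follows essentially the same route as the paper: the coupling $(v_t^X,v_t^Y)$ for the first inequality, Lemma \ref{lem:boundcondexp} for $p=1$, It\^o's formula with $(\sqrt{a}-\sqrt{b})^2\le|a-b|$ to obtain the Gr\"onwall-type inequality, induction over integer $p$, and Jensen/H\"older interpolation against the integer $\lceil p\rceil-1=\lfloor p\rfloor$ case for non-integer $p$, concluding via Theorem \ref{thm:equivalentWass}. The only divergence is the justification of It\^o for $p\in(1,2)$: you mollify $|x|^p$ by $(x^2+\varepsilon)^{p/2}$, whereas the paper stops the process at the first zero $\tau_0^v$ of $\Delta_t^{v,X,Y}$ (using pathwise uniqueness to get $\Delta\equiv 0$ afterwards), so the origin is never an issue; both patches are standard and valid.
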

\begin{proof}
Let us stress that the first inequality in \eqref{eq:Wp} follows by the definition of $p$-Wasserstein distance, once we observe that $(v_t^X,v_t^Y) \in C(\mu_t^{X,v},\mathbf{m}_v)$.

First, we prove the statement for $p=1$. Let $Y$ be a random variable such that ${\rm Law}(Y)=\bm{m}_v$. Recall that the moments of $Y$ are given by \eqref{eq:momGamma}. Observe that, by definition of Wasserstein distance and Lemma~\ref{lem:boundcondexp}, for any $t \ge 0$,
	\begin{equation*}
		\cW_1({\rm Law}(v_t^X),\bm{m}_v) \le \E\left[|\Delta^{v,X,Y}_t|\right] \le \E[|X-Y|]e^{-k_vt}
	\end{equation*}
        that leads to $\eqref{eq:Wp}$ for $p=1$.

Next, before proving the statement for $p>1$, we show the following identity:

\begin{align}\label{eq:overDeltapostlim2}
		\begin{split}
			e^{pk_vt}\E[|\Delta_{t}^{v,X,Y}|^p]\le \E[|\Delta_0^{v,X,Y}|^p]+\frac{p(p-1)\gamma^2}{2}\int_{0}^{t}e^{pk_vs}\E\left[|\Delta_{s}^{v,X,Y}|^{p-1}\right]ds.
		\end{split}
\end{align}
To do this let $\overline{\Delta}_t^{v,X,Y}=e^{pk_vt}\Delta_t^{v,X,Y}$. Let $M>0$ and consider the stopping times $\tau_0^v$ as in \eqref{eq:nulltime},
	\begin{equation*}
		T_M^v:=\inf\{t \ge 0: \ |\Delta_t^{v,X,Y}| \le M\},
	\end{equation*}
 and $\tau^{v,M}_t=t \wedge T_M^v \wedge \tau_0^v$.
 
    By It\^o's formula, we have
	\begin{align}\label{eq:overDeltaprelim}
		\begin{split}
			&	|\overline{\Delta}_{\tau_t^{v,M}}^{v,X,Y}|^p=|\Delta_{0}^{v,X,Y}|^p+\frac{p(p-1)\gamma^2}{2}\int_{0}^{\tau_t^{v,M}}e^{2k_vs}|\overline{\Delta}_s^{v,X,Y}|^{p-2}(\sqrt{v_s^X}-\sqrt{v_s^Y})^2ds\\
			&\qquad +p\gamma \int_{0}^{\tau_t^{v,M}}\overline{\Delta}_{\tau_s^{v,M}}^{v,X,Y}|\overline{\Delta}_{\tau_s^{v,M}}^{v,X,Y}|^{p-2}e^{k_vs}\left(\sqrt{v_{\tau_s^{v,M}}^X}-\sqrt{v_{\tau_s^{v,M}}^Y}\right)dW_s^{(3)}.	 		
		\end{split}
	\end{align}
        Since $2p-2>0$, for any $t \ge 0$ we have
	\begin{align*}
		\E&\left[\left|\overline{\Delta}_{\tau_t^{v,M}}^{v,X,Y}\right|^{2p-2}e^{2k_vt}\left(\sqrt{v_{\tau_t^{v,M}}^X}-\sqrt{v_{\tau_t^{v,M}}^Y}\right)^2\right] \\
		&\quad\le e^{2pk_vt}\left(\E\left[|\overline{\Delta}_{\tau_t^{v,M}}^{v,X,Y}|^{2p-1}\mathbf{1}_{\{T_M=0\}}\right]+\E\left[|\overline{\Delta}_{\tau_t^{v,M}}^{v,X,Y}|^{2p-1}\mathbf{1}_{\{T_M>0\}}\right]\right)\\
		&\quad\le e^{2pk_vt}\left(2^{2p-2}\left(\E[X^{2p-1}]+M_{2p-1}(\mathbf{m}_v)\right)+M^{2p-1}\right), 
	\end{align*}
        it is clear that the integrand in the stochastic integral on the right-hand side of \eqref{eq:overDeltaprelim} belongs to $M_t^2(\Omega)$ for all $t \ge 0$, hence, taking the expectation in \eqref{eq:overDeltaprelim}, by the optional stopping theorem, we get
        \begin{align}\label{eq:overDeltaprelim2}
		\begin{split}
			&	\E[|\overline{\Delta}_{\tau_t^{v,M}}^{v,X,Y}|^p]=\E[|\Delta_{0}^{v,X,Y}|^p]\\
			&\qquad 
   +\E\left[\int_{0}^{\tau_t^{v,M}}\frac{p(p-1)\gamma^2}{2}e^{2k_vs}|\overline{\Delta}_s^{v,X,Y}|^{p-2}\left(\sqrt{v_s^X}-\sqrt{v_s^Y}\right)^2ds\right].
		\end{split}
	\end{align}
        Since $T_M \to +\infty$, we know that $\tau_t^{v,M} \to t \wedge \tau_0^v$. Furthermore, by pathwise uniqueness of the solution of \eqref{eq:v}, it is clear that if $t \ge \tau_0$ then $\overline{\Delta}_t^{v,X,Y}=0$ and then
        \begin{equation*}
        \E[|\overline{\Delta}_{t}^{v,X,Y}|^p]=\E[|\overline{\Delta}_{t \wedge \tau_0^v}^{v,X,Y}|^p\mathbf{1}_{\{\tau_0^v > t\}}] \le \E[|\overline{\Delta}_{t \wedge \tau_0^v}^{v,X,Y}|^p].
	\end{equation*}
        Hence, taking the limit as $M \to +\infty$ in \eqref{eq:overDeltaprelim2} and using both Fatou's lemma and monotone convergence theorem, we finally get \eqref{eq:overDeltapostlim2}.

        Now we are ready to prove the sencond inequalty in \eqref{eq:Wp}. First, let us show it for $p \in \N$ by induction. We already proved it for $p=1$, hence assume that this is true for $p-1 \in \N$. By \eqref{eq:overDeltapostlim2} we get
        \begin{align*}
		e^{pk_vt}\E[|\Delta_{t}^{v,X,Y}|^p] 
		&\le 2^{p-1}(\E[X^p]+M_p(\mathbf{m}_v))+\frac{p(p-1)\gamma^2C^v_{p-1}(X)}{2(p-1)k_v}e^{(p-1)k_vt}
	\end{align*}
	so that, multiplying both sides by $e^{-pk_vt}$ and using H\"older's inequality, we get \eqref{eq:preWp}, with
	\begin{equation*}
		C^v_p(x)=2^{p-1}(x+M_p(\mathbf{m}_v))+\frac{p(p-1)\gamma^2C^v_{p-1}(x)}{2(p-1)k_v}, \ x \ge 0.
	\end{equation*}
    This proves \eqref{eq:Wp} for any $p \in \N$.
    
    Next, let $p \in (1,2)$. Then $p-1<1$ and we can use Jensen's inequality and \eqref{eq:Wp} for $p=1$ to get
    \begin{equation*}
		\E\left[|\Delta_{s}^{v,X,Y,\varepsilon}|^{p-1}\right] \le (\E\left[|\Delta_{s}^{v,X,Y,\varepsilon}|\right])^{p-1}\le (\E[X^p]+\eta)^{p-1}e^{-(p-1)k_vs}.
	\end{equation*}
	Hence, by \eqref{eq:overDeltapostlim2}
	\begin{align}
		\begin{split}
			e^{pk_vt}\E[|\Delta_{t}^{v,X,Y}|^p] \le \E[X^p]+M_p(\mathbf{m}_v)+\frac{p(p-1)\gamma^2}{2k_v}(\E[X^p]+\eta)^{p-1}e^{k_vt},
		\end{split}
	\end{align}
    that implies \eqref{eq:Wp} once we multiply both sides of the inequaity by $e^{-pk_vt}$.

    Finally, let $p>2$ with $p \not \in \N$. We already know that the \eqref{eq:Wp} holds for $\lfloor p \rfloor \in \N$. Furthermore, by H\"older's inequality with exponent $\frac{\lfloor p \rfloor}{p-1}>1$, we get
	\begin{equation*}
		\E\left[|\Delta_{s}^{v,X,Y,\varepsilon}|^{p-1}\right] \le (\E\left[|\Delta_{s}^{v,X,Y,\varepsilon}|^{\lfloor p \rfloor}\right])^{\frac{p-1}{\lfloor p \rfloor}}\le (C^v_{\lfloor p \rfloor}(\E[X^p]))^\frac{p-1}{\lfloor p \rfloor}e^{-\frac{p-1}{\lfloor p \rfloor}k_vs},
	\end{equation*}
        where we also used the fact that $C_{\lfloor p \rfloor}$ is non-decreasing and $\E[X^{\lfloor p \rfloor}] \le \E[X^p]$. By \eqref{eq:overDeltapostlim2} we get
	\begin{align*}
		\E[|\overline{\Delta}_{t}^{v,X,Y}|^p] 
		&\le 2^{p-1}(\E[X^p]+M_p(\mathbf{m}_v))+\frac{p(p-1)\gamma^2(C^v_{\lfloor p \rfloor}(\E[X^p]))^\frac{p-1}{\lfloor p \rfloor}\lfloor p \rfloor}{2(p(\lfloor p \rfloor-1)+1)}e^{p-\frac{p-1}{\lfloor p \rfloor}k_vt}
	\end{align*}
        that implies \eqref{eq:Wp} with 
        \begin{equation*}
		C^v_p(x)=2^{p-1}(x+M_p(\mathbf{m}_v))+\frac{p(p-1)\gamma^2(C^v_{\lfloor p \rfloor}(x))^\frac{p-1}{\lfloor p \rfloor}\lfloor p \rfloor}{2(p(\lfloor p \rfloor-1)+1)}.
	\end{equation*}

\end{proof}

\begin{remark}
 Let us recall that $\{v_t\}_{t \ge 0}$ is actually ergodic in total variation, by \cite[Theorem V.54.5]{rogers2000diffusions}. This implies the $1$-weak ergodicity, but does not provide the exponential bound in \eqref{eq:Wp}. Furthermore, let us recall that the total variation distance does not provide, in general, an upper bound for the $p$-Wasserstein distance as $p>1$, as observed, for instance, in \cite[Theorem 6.15]{villani2009optimal}. This implies that, despite the name, the \textit{strong} ergodicity in total variation does not imply in general the $p$-weak ergodicity if $p>1$. 
\end{remark}
In the following we will use the moments $F_{p}^{v,X}(t):=\E[(v_t^{X})^p]$ of $\{v_t^X\}_{t \ge 0}$. We obtain, as a further consequence of Theorem \ref{thm:weakergovp}, the following uniform bounds, together with a regularity estimate.
\begin{cor}\label{cor:upboundFpv}
	Let $p \ge 1$ and $X \in L^{2p-1}(\Omega; \R_{+,0}) \cap L^2(\Omega; \R_{+,0})$ be $\cF_0$-measurable. Then 
	\begin{equation}\label{eq:controllocal}
		\sup_{t \ge 0}F_{p}^{v,X}(t) \le \widetilde{C}_p^v(\E[X^p]), \ \mbox{ where } \
		\widetilde{C}_p^v(x)=2^{p-1}(C_p^v(x)+M_p(\bm{m}_v)).
	\end{equation}
    Furthermore, if $X \in L^{4p-3}(\Omega; \R_+) \cap L^2(\Omega;\R_+)$, then $F_p^{v,X} \in C^{\lfloor p \rfloor-1}(\R_+)$. If, in particular $p \in \N$, then $F_p^{v,X} \in C^\infty(\R_+)$.
\end{cor}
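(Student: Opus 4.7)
The plan is to derive both parts of the corollary as essentially immediate consequences of Theorem~\ref{thm:weakergovp}, combined with It\^o's formula applied to $(v_t^X)^p$.

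For the uniform moment bound, I would fix a $\cF_0$-measurable random variable $Y$ with ${\rm Law}(Y)=\mathbf{m}_v$, whose existence is guaranteed by item $(ii)$ of Proposition~\ref{prop:repspace}. Since $\mathbf{m}_v$ is the invariant measure of \eqref{eq:v}, one has ${\rm Law}(v_t^Y)=\mathbf{m}_v$, and hence $\E[(v_t^Y)^p]=M_p(\mathbf{m}_v)$ via \eqref{eq:momGamma}. Writing $v_t^X=(v_t^X-v_t^Y)+v_t^Y$ and applying the elementary inequality $(a+b)^p\le 2^{p-1}(a^p+b^p)$ for $a,b\ge 0$ yields
\begin{equation*}
F_p^{v,X}(t)\le 2^{p-1}\bigl(\E[|\Delta_t^{v,X,Y}|^p]+M_p(\mathbf{m}_v)\bigr).
\end{equation*}
Bounding the first term by Theorem~\ref{thm:weakergovp} (with the exponential factor dominated by $1$) and the second term by $M_p(\mathbf{m}_v)$ gives exactly the constant $\widetilde{C}_p^v(\E[X^p])=2^{p-1}(C_p^v(\E[X^p])+M_p(\mathbf{m}_v))$, uniformly in $t\ge 0$.

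For the regularity statement, I would apply It\^o's formula to $\varphi(x)=x^p$ along $v_t^X$, after a localization at the stopping time $T_M=\inf\{t\ge 0:v_t^X\ge M\}$ exactly as in the proof of Theorem~\ref{thm:weakergovp}, to obtain
\begin{equation*}
(v_t^X)^p=X^p+\int_0^t\Bigl[\bigl(pk_v\eta+\tfrac{p(p-1)\gamma^2}{2}\bigr)(v_s^X)^{p-1}-pk_v(v_s^X)^p\Bigr]ds+p\gamma\int_0^t(v_s^X)^{p-1/2}dW_s^{(3)}.
\end{equation*}
To pass from the localized version to this identity and to kill the stochastic integral in expectation, one needs $\{(v_s^X)^{p-1/2}\}_{s\ge 0}\in M_t^2(\Omega)$, i.e.\ $\sup_{s\in[0,t]}F_{2p-1}^{v,X}(s)<\infty$. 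By the first part of the corollary, this holds whenever $X\in L^{2(2p-1)-1}(\Omega;\R_{+,0})=L^{4p-3}(\Omega;\R_{+,0})$. Taking expectations then produces the integral equation
\begin{equation*}
F_p^{v,X}(t)=\E[X^p]+\int_0^t\Bigl[\bigl(pk_v\eta+\tfrac{p(p-1)\gamma^2}{2}\bigr)F_{p-1}^{v,X}(s)-pk_vF_p^{v,X}(s)\Bigr]ds,
\end{equation*}
so that $F_p^{v,X}$ is differentiable and its derivative is an affine function of $F_{p-1}^{v,X}$ and $F_p^{v,X}$.

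The regularity now follows by a backwards induction on the moment order. Each time we apply the identity above to a lower index $q\in(1,p]$, the required integrability $L^{4q-3}$ is strictly weaker than $L^{4p-3}$, so the assumption $X\in L^{4p-3}$ is enough to justify every step. Since $F_q^{v,X}\in C^0(\R_+)$ forces $F_q^{v,X}\in C^1(\R_+)$, and since $F_1^{v,X}$ is $C^\infty$ by the explicit formula \eqref{eq:expecvt}, one gains one degree of regularity per integer step and reaches $F_p^{v,X}\in C^{\lfloor p\rfloor-1}(\R_+)$. When $p\in\N$ the recursion continues indefinitely: each $F_q^{v,X}$ with $q\le p$ becomes $C^\infty$, and bootstrapping the ODE yields $F_p^{v,X}\in C^\infty(\R_+)$. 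The only genuinely delicate step is the truncation argument that turns the local martingale into a true martingale, but this is entirely parallel to the localization already executed in the proof of Theorem~\ref{thm:weakergovp}, so no new idea is needed.
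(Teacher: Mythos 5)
Your first part (the uniform bound) is correct and coincides with the paper's argument: decompose $v_t^X=\Delta_t^{v,X,Y}+v_t^Y$ with ${\rm Law}(Y)=\mathbf{m}_v$, apply $(a+b)^p\le 2^{p-1}(a^p+b^p)$, and invoke Theorem~\ref{thm:weakergovp} with the exponential factor bounded by $1$.

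The regularity part, however, has a genuine gap. You apply It\^o's formula directly to $\varphi(x)=x^p$ along $v_t^X$, but for $p\in(1,2)$ this function is not $C^2$ at the origin ($\varphi''(x)=p(p-1)x^{p-2}$ blows up as $x\to 0^+$), and the corollary does not assume the Feller condition $2k_v\eta\ge\gamma^2$, so the CIR process may hit zero with positive probability. Your localization at $T_M=\inf\{t\ge 0: v_t^X\ge M\}$ truncates from above and does nothing to keep the process away from the singularity at $0$, so the It\^o step is not justified as written. This is precisely the obstruction the paper flags in the remark following the corollary; its proof instead applies It\^o's formula to the genuinely $C^2$ regularization $V_{p,\varepsilon}(x)=(x^2+\varepsilon)^{p/2}$, obtains uniform (in $\varepsilon$) bounds on the drift integrand of the form $C(|v_s^X|^{p-1}+|v_s^X|^p)$, and then lets $\varepsilon\to 0$ via dominated convergence to recover the integral equation for $F_p^{v,X}$. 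The gap matters not only for $p\in(1,2)$ itself but also propagates through your backwards induction, since for non-integer $p\ge 2$ the recursion $F_p\mapsto F_{p-1}\mapsto\cdots$ eventually requires the integral identity at an exponent in $(1,2)$. For $p=1$ (explicit formula) and for each It\^o application at exponent $\ge 2$ your argument is fine; to repair the remaining cases you should either insert the $\varepsilon$-regularization or invoke a generalized It\^o formula valid for $C^1$ convex functions, neither of which is present in your write-up.
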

\begin{proof}
	Let $X \in L^{2p-1}(\Omega;\R_+)\cap L^2(\Omega;\R_+)$ and $Y$ be $\cF_0$-measurable with ${\rm Law}(Y)=\mathbf{m}_v$. We have, for any $t \ge 0$,
	\begin{equation*}
		F_p^{v,X}(t) \le 2^{p-1}\left(\E\left[\left|\Delta_t^{v,X,Y}\right|^p\right]+M_p(\mathbf{m}_v)\right)  \le 2^{p-1}\left(C^v_p(\E[X^p])+M_p(\mathbf{m}_v)\right)<\infty.
	\end{equation*}
        
        Now assume further that $X \in L^{4p-3}(\Omega;\R_+)$. The fact that $F_1^{v,X} \in C^\infty(\R_+)$ follows by \eqref{eq:expecvt}. Now let $p>1$ and consider for $\varepsilon \in (0,1)$ the function $V_{p,\varepsilon}(x)=(x^2+\varepsilon)^{\frac{p}{2}}$ for $x \in \R$. This function belongs to $C^2(\R)$ hence we can use It\^{o}'s formula to write
        \begin{align*}
        	&V_{p,\varepsilon}(v_t^X)=(X^2+\varepsilon)^{\frac{p}{2}}\\
        	&\quad +p\int_0^t(|v_s^X|^2+\varepsilon)^{\frac{p}{2}-2}\left[k_v(|v_s^X|^2+\varepsilon)v^X_s(\eta-v^X_s)+\frac{\gamma^2}{2}\left((p-1)|v_s^X|^2+\varepsilon\right)v^X_s\right]ds\\
        	&\quad +\gamma p\int_0^t(|v_s^X|^2+\varepsilon)^{\frac{p}{2}-1}|v_s^X|^{\frac{3}{2}}dW^{(3)}_s.
        \end{align*}
    	Since $X \in L^{4p-3}(\Omega;\R_+) \cup L^2(\Omega;\R_+)$, we know by the first part of the statement that $\sup_{t \ge 0}F_{2p-1}^{v,X}(t)<\infty$ and then it is not difficult to check that $\{(|v_s^X|^2+\varepsilon)^{\frac{p}{2}-1}|v_s^X|^{\frac{3}{2}}\}_{s \ge 0} \in M^2_t(\Omega)$. Furthermore, observe that the first integrand can be bounded as follows:
    	\begin{align*}
    		&(|v_s^X|^2+\varepsilon)^{\frac{p}{2}-2}\left|k_v(|v_s^X|^2+\varepsilon)v^X_s(\eta-v^X_s)+\frac{\gamma^2}{2}\left((p-1)|v_s^X|^2+\varepsilon\right)v^X_s\right|\\
    		&\quad  \le (|v_s^X|^2+\varepsilon)^{\frac{p-1}{2}} \left[\left(k_v\eta+\frac{\gamma^2}{2}\max\{p-1,1\}\right)+k_v\sqrt{|v_s^X|^2+\varepsilon}\right]\\
    		&\quad\le (|v_s^X|^2+1)^{\frac{p-1}{2}} \left[\left(k_v\eta+\frac{\gamma^2}{2}\max\{p-1,1\}\right)+k_v\sqrt{|v_s^X|^2+1}\right]\\
    		&\quad \le C(|v_s^X|^{p-1}+|v_x^X|^p),
    	\end{align*}
    	where $C$ is a constant depending only on $p,k_v,\gamma$ and $\eta$. Hence, since $F_p^{v,X}(t)$ is bounded, we can take the expectation and use Fubini's theorem to get 
    	\begin{align*}
    		&\E\left[V_{p,\varepsilon}(v_t^X)\right]=\E\left[(X^2+\varepsilon)^{\frac{p}{2}}\right]\\
    		&\quad +p\int_0^t\E\left[(|v_s^X|^2+\varepsilon)^{\frac{p}{2}-2}\left[k_v(|v_s^X|^2+\varepsilon)v^X_s(\eta-v^X_s)+\frac{\gamma^2}{2}\left((p-1)|v_s^X|^2+\varepsilon\right)v^X_s\right]\right]ds.
    	\end{align*}
        Taking the limit as $\varepsilon \to 0$ and using the dominated convergence theorem, we finally achieve
%
%
%
	\begin{equation}\label{eq:prederivative}
		F_p^{v,X}(t)=\E[X^p]+\int_0^t\left(\left(pk_v\eta+\frac{p(p-1)}{2}\gamma\right)F_{p-1}^{v,X}(s)-pk_vF_p^{v,X}(s)\right)ds,
        \end{equation}
    where the right-hand side is absolutely continuous. This proves that $F_p^{v,X}$ is continuous. To show that $F_p^{v,X} \in C^{\lfloor p \rfloor}(\R_+)$ let us argue by induction on $\lfloor p \rfloor$. We already proved the statement for $\lfloor p \rfloor=1$. Assume that $F_p^{v,X} \in C^{n-1}(\R_+)$ if $p \in [n,n+1)$ and consider any $p \in [n+1,n+2)$. Then $F_{p-1}^{v,X} \in C^{n-1}(\R_+)$ and, with a simple bootstrap procedure, \eqref{eq:prederivative} implies $F_p^{v,X} \in C^n(\R_+)$. Finally, if $p \in \N$, we still proceed by induction. Indeed, we already know that $F_1^{v,X} \in C^\infty(\R_+)$. Furthermore, if $F_p^{v,X} \in C^\infty(\R_+)$, then \eqref{eq:prederivative} clearly implies that $F_{p+1}^{v,X} \in C^\infty(\R_+)$.
\end{proof}
\begin{remark}
	Under Feller's condition $2k_v\eta>\gamma^2$, we know that $\{v_t^X\}_{t \ge 0}$ cannot touch $0$ and then the previous statement could be proved by using It\^{o}'s formula on $|v_t^X|^p$. This is not possible if $p \in (1,2)$ an $2k_v\eta \le \gamma^2$.
\end{remark}
Now we are ready to move forward and prove the $p$-weak ergodicity of the two-factors process $\{\bm{\theta}_t\}_{t \ge 0}$.

\section{The $p$-weak ergodicity of the two-factor process}\label{subsec:twofactor}

Now we want to prove the $p$-weak ergodicity of the two-factor process $\{\theta_t\}_{t \ge 0}$. Let us first underline that classical results cannot be used in this specific context. For instance, strong ergodicity (i.e., ergodicity in total variation) of $\{\bm{\theta}_t\}_{t \ge 0}$ cannot be shown through coupling methods as in \cite[Theorem V.54.5]{rogers2000diffusions}, since the process takes value in a subset of $\R^2$ and thus, despite being a diffusion, it is not $1$-dimensional. On the other hand, we cannot even employ such a result for just the second factor $\{\theta_t\}_{t \ge 0}$, since it solves a non-autonomous SDE, as the diffusion term depends explicitly on $\{v_t\}_{t \ge 0}$. We focus in particular on $p$-weak ergodicity since, as we will see in what follows, the $p$-th order moments of the two-factor process $\{\bm{\theta}_t\}_{t \ge 0}$ are still sufficiently manageable. However, even in this case, classical weak ergodicity results cannot be used to prove directly the $p$-weak ergodicity of the process. For instance, we are not able to apply directly the general Dobrushin theorem \cite[Theorem 4.5.1]{kulik2017ergodic}. In general, for dissipative systems with Lipschitz coefficients, $p$-weak exponential ergodicity can be shown with a simple employment of It\^o formula (see the discussion in \cite[Sections 4.2.1 and 4.2.2]{kulik2017ergodic}. However, despite \eqref{Our_Model_true} being a dissipative system, the lack of the Lipschitz property of the diffusion term adds a further complication in proving that we are under the assumptions of Dobrushin theorem. On the other hand, following in detail the proof of \cite[Proposition 4.2.1]{kulik2017ergodic}, it is clear that, in the one-dimensional case, the diffusion term only needs to be $1/2$-H\"older continuous. However, again, we cannot apply such an argument to the process $\{\theta_t\}_{t \ge 0}$ since despite the diffusion term is $1/2$-H\"older continuous, it is explicitly dependent on time via $\{v_t\}_{t \ge 0}$. For this reason, we will use a more intricate strategy to prove the $p$-weak ergodicity of $\{\bm{\theta}_t\}$.


Let us first introduce some notations. For any $\mathbf{X}=(X^{(1)},X^{(2)}) \in L^2(\Omega;\R_{+,0}^2)$ $\cF_0$-measurable, we let $\{\bm{\theta}_t^\mathbf{X}\}_{t \ge 0}=\{(\theta_t^{\mathbf{X}},v_t^{\mathbf{X}})\}_{t \ge 0}$ be the pathwise unique strong solution of 
\begin{equation}\label{eq:SDEaux2a}
	\begin{cases}
		d\theta_t^{\mathbf{X}}=k_\theta(\zeta-\theta^{\mathbf{X}}_t)dt+\alpha\beta \sqrt{v^{\mathbf{X}}_t\theta^{\mathbf{X}}_t}dW^{(2)}_t\\
		dv^{\mathbf{X}}_t=k_v(\eta-v^{\mathbf{X}}_t)dt+\gamma \sqrt{v^{\mathbf{X}}_t}dW^{(3)}_t\\
		\theta^{\mathbf{X}}_{0}=X^{(1)} \qquad v^{\mathbf{X}}_0=X^{(2)}.
	\end{cases}
\end{equation}
For any $p \ge 0$, we set $G_{p}^{\theta,\mathbf{X}}(t)=\E[|\theta_t^{\mathbf{X}}|^pv_t^{\mathbf{X}}]$ and $F_p^{\theta,\mathbf{X}}(t)=\E[|\theta_t^{\mathbf{X}}|^p]$. Finally, we define $\Delta_t^{\theta,\mathbf{X},\mathbf{Y}}=\theta_t^{\mathbf{X}}-\theta_t^{\mathbf{Y}}$ for any $\mathbf{X},\mathbf{Y} \in L^2(\Omega;\R_{+,0}^2)$ $\cF_0$-measurable and $t \ge 0$. 

Precisely, we will argue as follows:
\begin{itemize}
	\item[(I)] First we provide some uniform bounds on the functions $F_p^{\theta,\mathbf{X}}$ under some assumptions on $\mathbf{X}$. This will be done in three substeps: first, we prove some local bounds on $F_p^{\theta,\mathbf{X}}$, then we do the same on $G_p^{\theta,\mathbf{X}}$ and finally we prove the global bound on $F_p^{\theta,\mathbf{X}}$.
	\item[(II)] Next we prove that $\{\bm{\theta}_t^{\mathbf{X}}\}_{t \ge 0}$ is $p$-weakly ergodic under the assumptions \linebreak ${\rm Law}(X^{(1)})=\mathbf{m}_v$ and $X^{(2)} \in L^{p^\prime}(\Omega;\R_{+,0})$ with $p^\prime>4p-3$. To do this, we first provide a bound on the expectation of $\left|\Delta_t^{\theta,\mathbf{X},\mathbf{Y}}\right|$, then we prove the $1$-weak ergodicity by extending the proof of Dobrushin's theorem to our case. Finally, we prove $p$-weak ergodicity by using the uniform bounds we proved before.
	\item[(III)] As a third step, we need to study some properties of the limit distribution $\bm{\pi}^{v,\theta}$ we found in Step $(II)$. Precisely, we prove that this is an invariant distribution and if, for any fixed initial data $\mathbf{X}$, $\{\bm{\theta}_t^{\mathbf{X}}\}_{t \ge 0}$ admits a limit distribution, such a limit distribution must be $\bm{\pi}^{v,\theta}$. This also tells us that $\bm{\pi}^{v,\theta}$ is the unique invariant distribution.
	\item[(IV)] Finally, we use the uniform bounds on $F_p^{\theta,\mathbf{X}}$ provided in Step (I) to show that $\{\bm{\theta}_t^{\mathbf{X}}\}_{t \ge 0}$ is $p$-weakly ergodic under suitable integrability assumptions on the initial data $\mathbf{X}$ (that are clearly satisfied by any degenerate/deterministic initial data).
\end{itemize}

\subsection{Step (I): global bounds on the moments}\label{subs:S1}
We first want to prove that, under suitable conditions on $\mathbf{X}$, $\sup_{t \ge 0}F_p^{\theta, \mathbf{X}}(t)<\infty$. To do this, we first need to provide some local bounds on $F_p^{\theta, \mathbf{X}}$ and $G_p^{\theta, \mathbf{X}}$. 
\begin{lem}\label{lem:Fptheta}
	Let $p \ge 2$, $p^\prime>2p-1$ and $\widetilde{p}=\max\left\{2p-1,\frac{p^\prime}{p^\prime-2p+1}\right\}$. Consider also two $\cF_0$-measurable random variables $X^{(1)} \in L^{p^\prime}(\Omega;\R_{+,0})$ and $X^{(2)} \in L^{\widetilde{p}}(\Omega;\R_{+,0})$. 
	Then, for any $T>0$,
	\begin{equation*}
		\sup_{t \in [0,T]}F_p^{\theta,\mathbf{X}}(t)<\infty.
	\end{equation*}
	If, furthermore, $X^{(1)}$ and $X^{(2)}$ are independent, then the statement holds even for $p^\prime=\widetilde{p}=2p-1$.	
\end{lem}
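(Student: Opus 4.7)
The plan is to apply It\^o's formula to $(\theta_t^{\mathbf{X}})^p$ and close the resulting integral equation for $F_p^{\theta,\mathbf{X}}(t)$ by combining H\"older, Young, the CIR moment bounds of Corollary~\ref{cor:upboundFpv} and a Gr\"onwall argument. Since $\theta_t^{\mathbf{X}} \ge 0$ almost surely and $p \ge 2$, the map $x \mapsto x^p$ is $C^2$ on $\R_{+,0}$ and It\^o yields
\begin{align*}
(\theta_t^{\mathbf{X}})^p = (X^{(1)})^p &+ \int_0^t \Bigl[pk_\theta\zeta (\theta_s^{\mathbf{X}})^{p-1} - pk_\theta (\theta_s^{\mathbf{X}})^p + \tfrac{p(p-1)\alpha^2\beta^2}{2} v_s^{\mathbf{X}} (\theta_s^{\mathbf{X}})^{p-1}\Bigr] ds \\
&+ p\alpha\beta \int_0^t (\theta_s^{\mathbf{X}})^{p-1/2}\sqrt{v_s^{\mathbf{X}}}\, dW_s^{(2)}.
\end{align*}

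The first step is to verify that the stochastic integral is a true $L^2$-martingale, i.e.\ that $\int_0^T \E[(\theta_s^{\mathbf{X}})^{2p-1} v_s^{\mathbf{X}}]\,ds < \infty$. This is precisely where $\widetilde{p}$ is tailored: H\"older with conjugate exponents $p'/(2p-1)$ and $\widetilde{p}=p'/(p'-2p+1)$ gives $\E[(\theta_s^{\mathbf{X}})^{2p-1} v_s^{\mathbf{X}}] \le \E[(\theta_s^{\mathbf{X}})^{p'}]^{(2p-1)/p'} \E[(v_s^{\mathbf{X}})^{\widetilde{p}}]^{1/\widetilde{p}}$. The $v$-factor is handled by a local-in-time variant of Corollary~\ref{cor:upboundFpv} using $X^{(2)} \in L^{\widetilde{p}}$, while the $\theta$-factor forces a preliminary local bound on $\E[(\theta_s^{\mathbf{X}})^{p'}]$ that I would establish by the very same It\^o scheme one level up, using a stopping time $\tau_M = \inf\{t:\theta_t^{\mathbf{X}} \ge M\}$ to sidestep the martingale issue at that level and letting $M \to \infty$ via Fatou after a Gr\"onwall bound is obtained at the $p'$-level.

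With the martingale property secured, taking expectations yields an integral equation for $F_p^{\theta,\mathbf{X}}(t)$ with dissipative leading term $-pk_\theta F_p^{\theta,\mathbf{X}}$. The sub-linear and mixed remainders satisfy
\[
\E[(\theta_s^{\mathbf{X}})^{p-1}]\le F_p^{\theta,\mathbf{X}}(s)^{(p-1)/p},\qquad \E[(\theta_s^{\mathbf{X}})^{p-1} v_s^{\mathbf{X}}] \le F_p^{\theta,\mathbf{X}}(s)^{(p-1)/p} \E[(v_s^{\mathbf{X}})^p]^{1/p},
\]
where $\E[(v_s^{\mathbf{X}})^p]$ is uniformly bounded by Corollary~\ref{cor:upboundFpv} (recall $\widetilde{p} \ge 2p-1$). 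Young's inequality with a small parameter absorbs each $F_p^{\theta,\mathbf{X}}(s)^{(p-1)/p}$ factor into $\varepsilon F_p^{\theta,\mathbf{X}}(s) + C_\varepsilon$, producing a linear differential inequality of the form $(F_p^{\theta,\mathbf{X}})'(t) \le C_1 F_p^{\theta,\mathbf{X}}(t) + C_2$ to which Gr\"onwall delivers the claimed local uniform bound.

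Under independence of $X^{(1)}$ and $X^{(2)}$, the mixed initial moment entering the $p'$-level estimate factorizes as $\E[(X^{(1)})^{2p-1}]\E[X^{(2)}]$, which removes the H\"older coupling that had forced $p' > 2p-1$ and lets the chain close with the sharper choice $p' = \widetilde{p} = 2p-1$. The main obstacle I anticipate is the circularity induced by the non-Lipschitz coefficient $\sqrt{v_t\theta_t}$: controlling $\E[(\theta_s^{\mathbf{X}})^{2p-1} v_s^{\mathbf{X}}]$ seems unavoidably to require a higher $\theta$-moment, so the key is to H\"older-decouple at the precise exponent $\widetilde{p}$ so that only $v$-moments -- available a priori via the CIR estimates of Section~\ref{sec:CIRwe} -- remain on the $v$-side of the product.
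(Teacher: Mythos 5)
Your overall scheme (It\^o on $(\theta_t^{\mathbf{X}})^p$, Young to exploit the dissipative drift, CIR moment bounds for the $v$-factor) matches the paper's, but the crucial step --- verifying that the stochastic integral is a martingale --- is handled in a way that does not close under the stated hypotheses. You propose to bound the running mixed moment $\E[(\theta_s^{\mathbf{X}})^{2p-1}v_s^{\mathbf{X}}]$ by H\"older-decoupling it into $\E[(\theta_s^{\mathbf{X}})^{p'}]^{(2p-1)/p'}\E[(v_s^{\mathbf{X}})^{\widetilde p}]^{1/\widetilde p}$, which forces you to first prove a local bound on $F_{p'}^{\theta,\mathbf{X}}$ by running ``the same scheme one level up.'' But at the $p'$-level the same obstruction reappears in a strictly worse form: whether you stop only $\theta$ or both components, the event $\{\tau_M=0\}$ contributes the initial mixed moment $\E[X^{(2)}(X^{(1)})^{2p'-1}]$, whose finiteness requires $X^{(1)}$ to have moments of order greater than $2p'-1>4p-3$ --- strictly more than the assumed $X^{(1)}\in L^{p'}$. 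So the bootstrap is circular and cannot be closed with the given integrability.

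The paper's resolution is to apply H\"older only to the \emph{initial data}, never to the process at positive times. Stopping at $T_M^{\theta,\mathbf{X}}=\inf\{t:\theta_t^{\mathbf{X}}\vee v_t^{\mathbf{X}}\ge M\}$ (note: both components), the stopped integrand satisfies
\begin{equation*}
\E\bigl[v_{\tau_s}^{\mathbf{X}}|\theta_{\tau_s}^{\mathbf{X}}|^{2p-1}\bigr]\le \E\bigl[X^{(2)}|X^{(1)}|^{2p-1}\bigr]+M^{2p},
\end{equation*}
since on $\{T_M^{\theta,\mathbf{X}}>0\}$ everything is bounded by $M$ and on $\{T_M^{\theta,\mathbf{X}}=0\}$ only the initial data appear. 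The exponents $p'/(2p-1)$ and $\widetilde p\ge p'/(p'-2p+1)$ are chosen precisely to make $\E[X^{(2)}|X^{(1)}|^{2p-1}]$ finite, and no a priori bound on any $\theta$-moment higher than $p$ is ever needed; one then takes expectations, uses Young with the specific $\varepsilon$ that makes the $|\theta|^p$-terms cancel entirely (leaving only a constant and $|v_s^{\mathbf{X}}|^p$, handled by Corollary~\ref{cor:upboundFpv}), and lets $M\to\infty$ by Fatou. This also explains the independent case: independence factorizes the initial mixed moment $\E[X^{(2)}|X^{(1)}|^{2p-1}]=\E[X^{(2)}]\,\E[|X^{(1)}|^{2p-1}]$, allowing $p'=\widetilde p=2p-1$; your own remark to this effect implicitly presumes the initial-data interpretation and is inconsistent with the running-time H\"older split you actually propose, since the processes do not remain independent for $s>0$.
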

\begin{proof}
	
	Let $p \ge 2$ and observe that, by It\^o's formula,
	\begin{align}\label{eq:thetap2}
		\begin{split}
			(\theta_{\tau^{\theta,\bm{X},M}_t}^{\mathbf{X}})^p&=\left|X^{(1)}\right|^p+\int_0^{\tau^{\theta,\bm{X},M}_t} \left(-k_\theta p \left|\theta^{\mathbf{X}}_s\right|^p+\left(k_\theta p \zeta+\frac{\alpha^2\beta^2p(p-1)}{2}v_s^{\mathbf{X}}\right)|\theta_s^{\mathbf{X}}|^{p-1}\right)ds\\
			&\qquad +\alpha \beta p\int_0^{\tau^{\theta,\bm{X},M}_t} \sqrt{v_{\tau^{\theta,\bm{X},M}_s}^{\mathbf{X}}}|\theta_{\tau^{\theta,\bm{X},M}_s}^{\mathbf{X}}|^{p-\frac{1}{2}}dW_s^{(2)},
		\end{split}
	\end{align}
	where, for any $M,t>0$,
	\begin{equation}\label{eq:TRtheta}
		T_M^{\theta,\mathbf{X}}:=\inf\{t \ge 0: \ \theta_t^{\mathbf{X}} \vee v_t^{\mathbf{X}}   \ge  M\}, \quad  \tau^{\theta,\bm{X},M}_t=t \wedge T_M^{\theta,\bm{X}}.
	\end{equation}
		Once we notice that
	\begin{align}\label{eq:Mt2theta1}
		\begin{split}
		\E&\left[v_{\tau^{\theta,\bm{X},M}_s}^{\mathbf{X}}|\theta_{\tau^{\theta,\bm{X},M}_s}^{\mathbf{X}}|^{2p-1}\right]\\
		&=\E\left[v_{\tau^{\theta,\bm{X},M}_s}^{\mathbf{X}}|\theta_{\tau^{\theta,\bm{X},M}_s}^{\mathbf{X}}|^{2p-1}\mathbf{1}_{\{T_M^{\theta,\mathbf{X}}=0\}}\right]+\E\left[v_{\tau^{\theta,\bm{X},M}_s}^{\mathbf{X}}|\theta_{\tau^{\theta,\bm{X},M}_s}^{\mathbf{X}}|^{2p-1}\mathbf{1}_{\{T_M^{\theta,\mathbf{X}}>0\}}\right]\\
		& \le \E\left[X^{(2)}\left|X^{(1)}\right|^{2p-1}\right]+M^{2p} \le \left(\E\left[\left|X^{(2)}\right|^{\frac{p^\prime}{p^\prime-2p+1}}\right]\right)^{\frac{p^\prime-2p+1}{p^\prime}}\E\left[\left|X^{(1)}\right|^{p^\prime}\right]^{\frac{2p-1}{p^\prime}}+M^{2p},	
		\end{split}
	\end{align}
	we know that $\left\{\sqrt{v_{\tau^{\theta,\bm{X},M}_s}^{\mathbf{X}}}|\theta_{\tau^{\theta,\bm{X},M}_s}^{\mathbf{X}}|^{p-\frac{1}{2}}\right\}_{s \ge 0} \in M^2_t(\Omega)$ for any $t \ge 0$, hence we can take the expectation in \eqref{eq:thetap2} to achieve, by the optional stopping theorem,
	\begin{align}\label{eq:thetap3}
		\begin{split}
			\E\left[(\theta_{\tau^{\theta,\bm{X},M}_t}^{\mathbf{X}})^p\right]&=\E\left[\left|X^{(1)}\right|^p\right]\\
			&+\E\left[\int_0^{\tau^{\theta,\bm{X},M}_t} \left(-k_\theta p \left|\theta^{\mathbf{X}}_s\right|^p+\left(k_\theta p \zeta+\frac{\alpha^2\beta^2p(p-1)}{2}v_s^{\mathbf{X}}\right)|\theta_s^{\mathbf{X}}|^{p-1}\right)ds\right].
		\end{split}
	\end{align}
	Now set
	\begin{equation*}
		\varepsilon=\left(\frac{2 k_\theta p}{\alpha^2\beta^2(p-1)^2+2k_\theta\zeta(p-1)}\right)^{1-\frac{1}{p}}
	\end{equation*}
	and use Young's inequality to write
	\begin{equation}\label{eq:Youngp1}
		\left|\theta_s^{\mathbf{X}}\right|^{p-1} \le \frac{1}{p\varepsilon^p}+\frac{(p-1)\varepsilon^{\frac{p}{p-1}}}{p}\left|\theta_s^{\mathbf{X}}\right|^{p}, \quad  v_s^{\mathbf{X}}\left|\theta_s^{\mathbf{X}}\right|^{p-1} \le \frac{\left|v_s^{\mathbf{X}}\right|^p}{p\varepsilon^p}+\frac{(p-1)\varepsilon^{\frac{p}{p-1}}}{p}\left|\theta_s^{\mathbf{X}}\right|^{p}.
	\end{equation}
	Combining these inequalities with \eqref{eq:thetap3}, we achieve
	\begin{align}\label{eq:thetap5}
		\begin{split}
			\E&\left[(\theta_{\tau^{\theta,\bm{X},M}_t}^{\mathbf{X}})^p\right]=\E\left[\left|X^{(1)}\right|^p\right]\\
			&+\left(\frac{\alpha^2\beta^2(p-1)^2+2k_\theta\zeta(p-1)}{2pk_\theta}\right)^{p-1}\E\left[\int_0^{\tau^{\theta,\bm{X},M}_t} \left(\zeta k_\theta +\frac{\alpha^2\beta^2(p-1)}{2}\left|v_s^{\mathbf{X}}\right|^p\right)ds\right],
		\end{split}
	\end{align}
	that, in turn, taking the limit as $M \to +\infty$ and using Fatou's lemma, leads to
	\begin{align}\label{eq:thetap6}
		\begin{split}
			&F_p^{\theta,\mathbf{X}}(s)\le \E\left[\left|X^{(1)}\right|^p\right]\\
			&\quad +\left(\frac{\alpha^2\beta^2(p-1)^2+2k_\theta\zeta(p-1)}{2pk_\theta}\right)^{p-1}\left(\zeta k_\theta+\frac{\alpha^2\beta^2(p-1)}{2}C_p^{v}(\E[|X^{(2)}|^p])\right)t,
		\end{split}
	\end{align}
	where $C_p^{v}$ is defined in \ref{cor:upboundFpv}.
	
	Finally, to prove the statement under the additional condition that $X^{(1)}$ and $X^{(2)}$ are independent, it is sufficient to observe that, arguing as in \eqref{eq:Mt2theta1}
	\begin{equation*}
		\E[v_{\tau_s^{\theta,\bm{X},M}}^{\bm{X}}|\theta_{\tau_s^{\theta,\bm{X},M}}^{\bm{X}}|^{2p-1}] \le \E[X^{(2)}]\E[|X^{(1)}|^{2p-1}]+M^{2p}<\infty
	\end{equation*}
	and then the proof proceeds exactly in the same way.
\end{proof}
As a direct consequence of Lemma~\ref{lem:Fptheta}, we get the following local bounds on $G_p^{\theta,\bm{X}}$.
\begin{lem}\label{lem:upGptheta}
	Let $p \ge 2$, $\overline{p}>p$, $p^\prime>2\overline{p}-1$ and
	$\widetilde{p}=\max\left\{2\overline{p}-1,\frac{p^\prime}{p^\prime-2\overline{p}+1}\right\}$. Consider also two $\cF_0$-measurable random variables $X^{(1)} \in L^{p^\prime}(\Omega;\R_{+,0})$ and $X^{(2)} \in L^{\widetilde{p}}(\Omega;\R_{+,0})$. Then, for any $T>0$
	\begin{equation*}
		\sup_{t \in [0,T]}G_p^{\theta,\mathbf{X}}(t)<\infty.
	\end{equation*}	
\end{lem}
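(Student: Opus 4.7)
The natural approach is to decouple the mixed moment $G_p^{\theta,\bm{X}}(t)=\E[|\theta_t^{\bm{X}}|^p v_t^{\bm{X}}]$ via H\"older's inequality. Since $\overline{p}>p$, the conjugate exponents $q_1:=\overline{p}/p$ and $q_2:=\overline{p}/(\overline{p}-p)$ are both finite, and H\"older's inequality applied to the product $|\theta_t^{\bm{X}}|^p\cdot v_t^{\bm{X}}$ yields the pointwise-in-$t$ estimate
\begin{equation*}
G_p^{\theta,\bm{X}}(t) \le \bigl(F_{\overline{p}}^{\theta,\bm{X}}(t)\bigr)^{p/\overline{p}}\bigl(F_{q_2}^{v,X^{(2)}}(t)\bigr)^{(\overline{p}-p)/\overline{p}}.
\end{equation*}
This reduces the problem to bounding each factor locally in time.

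For the $\theta$-factor, I would invoke Lemma \ref{lem:Fptheta} applied at the exponent $\overline{p}$: the standing assumptions $p'>2\overline{p}-1$ and $\widetilde{p}=\max\{2\overline{p}-1,\,p'/(p'-2\overline{p}+1)\}$ are \emph{exactly} the hypotheses of that lemma at exponent $\overline{p}$, so $\sup_{t \in [0,T]} F_{\overline{p}}^{\theta,\bm{X}}(t)<\infty$ for every $T>0$. For the $v$-factor, I would appeal to Corollary \ref{cor:upboundFpv} applied with $q_2$ in place of $p$, which gives $\sup_{t \ge 0} F_{q_2}^{v,X^{(2)}}(t) < \infty$ as soon as $X^{(2)} \in L^{2q_2-1}(\Omega;\R_{+,0})\cap L^2(\Omega;\R_{+,0})$. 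Since $\widetilde{p} \ge 2\overline{p}-1$ and $p \ge 2$, this moment condition follows from $X^{(2)} \in L^{\widetilde{p}}$ in the regime $\overline{p}-p \ge 1$, which is comfortably guaranteed in the application to Theorem \ref{thm:pweakergoR} (where $\overline{p}>2p-1$ forces $\overline{p}-p>p-1 \ge 1$). Combining the two bounds then gives the desired local control of $G_p^{\theta,\bm{X}}$.

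The only mild technical point, which I do not regard as a genuine obstacle, is the moment-matching verification for $X^{(2)}$ in the borderline window where $\overline{p}$ is only slightly larger than $p$. In that case one can instead apply It\^o's formula directly to $\theta_t^p v_t$, after a localization argument in the spirit of the proof of Lemma \ref{lem:Fptheta} and using that the $\theta$-$v$ cross-variation vanishes (since $W^{(2)}$ and $W^{(3)}$ are uncorrelated). Taking expectations and using Young's inequalities of the form $\theta^{p-1}v^2 \le \tfrac{p-1}{p}\theta^p v + \tfrac{1}{p}v^{p+1}$ and $\theta^{p-1}v \le \tfrac{p-1}{p}\theta^p + \tfrac{1}{p}v^p$ produces a linear differential inequality for $G_p^{\theta,\bm{X}}(t)$ whose inhomogeneous terms involve only $F_p^{\theta,\bm{X}}$, $F_p^{v,X^{(2)}}$, and $F_{p+1}^{v,X^{(2)}}$; the first is locally bounded by Lemma \ref{lem:Fptheta} at exponent $p$ (whose hypotheses are weaker than those imposed here, since $\overline{p}>p$), and the latter two are locally bounded because $\widetilde{p} \ge 2\overline{p}-1 \ge 2p-1 \ge p+1$ for $p \ge 2$. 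Gr\"onwall's lemma then closes the argument.
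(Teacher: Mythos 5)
Your main route coincides with the paper's: the paper decouples via the pointwise Young inequality $|\theta|^p v \le \tfrac{p}{\overline{p}}|\theta|^{\overline{p}}+\tfrac{\overline{p}-p}{\overline{p}}v^{\overline{p}/(\overline{p}-p)}$ (your H\"older version is equivalent for this purpose), then invokes Lemma~\ref{lem:Fptheta} at exponent $\overline{p}$ and Corollary~\ref{cor:upboundFpv} at exponent $q_2=\overline{p}/(\overline{p}-p)$. The issue you correctly sense is real: Corollary~\ref{cor:upboundFpv} at $q_2$ requires $X^{(2)}\in L^{2q_2-1}=L^{(\overline{p}+p)/(\overline{p}-p)}$, and when $\overline{p}-p<1$ and $p'$ is large one only has $\widetilde{p}=2\overline{p}-1<(\overline{p}+p)/(\overline{p}-p)$. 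The paper resolves this by a reduction on $p'$: assuming $p'\le\frac{(\overline{p}+p)(2\overline{p}-1)}{2p}$ forces $\frac{p'}{p'-2\overline{p}+1}\ge\frac{\overline{p}+p}{\overline{p}-p}$, i.e.\ $\widetilde{p}\ge 2q_2-1$, which is exactly the threshold of Corollary~\ref{cor:upboundFpv}.

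Your two ways of dispatching that borderline window do not hold up. First, the claim that it is ``comfortably guaranteed in the application to Theorem~\ref{thm:pweakergoR}'' is incorrect: in the proof of Theorem~\ref{lem:Fpthetaglob} this lemma is invoked with $2p-1$ in place of $p$ and the \emph{same} $\overline{p}>2p-1$, so the relevant gap is $\overline{p}-(2p-1)$, which can be arbitrarily small; the borderline regime does occur in the application. Second, your fallback It\^{o} argument on $\theta_t^p v_t$ produces the inhomogeneous term $F_{p+1}^{v,X^{(2)}}$, and the only tool available to control it is Corollary~\ref{cor:upboundFpv}, whose hypothesis at exponent $p+1$ is $X^{(2)}\in L^{2(p+1)-1}=L^{2p+1}$, not $X^{(2)}\in L^{p+1}$. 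Your justification ``$\widetilde{p}\ge 2\overline{p}-1\ge 2p-1\ge p+1$'' targets the wrong threshold, and in the problematic regime ($\overline{p}-p<1$, $p'$ large) one has $\widetilde{p}=2\overline{p}-1<2p+1$, so the fallback does not close the gap either. To repair your argument you should either reproduce the paper's reduction on $p'$ or supply a separate local-in-time moment propagation result for the CIR factor under weaker integrability of the initial datum.
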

\begin{proof}
	Without loss of generality, we can assume that $p^\prime \le \frac{(\overline{p}+p)(2\overline{p}-1)}{2p}$, so that $\frac{\overline{p}+p}{\overline{p}-p} \le \frac{p'}{p'-2\overline{p}+1}$, and then $\frac{\overline{p}}{\overline{p}-p}$ satisfies the assumptions of Corollary~\ref{cor:upboundFpv}. At the same time, $\overline{p}$ satisfies the assumptions of Lemma~\ref{lem:Fptheta}. The statement follows by Lemma~\ref{lem:Fptheta} and Corollary~\ref{cor:upboundFpv} once we observe that, by Young's inequality,
	\begin{equation*}
		G_p^{\theta,X}(t) \le \frac{p}{\overline{p}}F_{\overline{p}}^{\theta,X}(t)+\frac{\overline{p}-p}{\overline{p}}F_{\frac{\overline{p}}{\overline{p}-p}}^{v,X^{(2)}}(t).
	\end{equation*}
\end{proof}
Next, let us recall the following really easy lemma on linear integral inequalities.
\begin{lem}\label{lem:contbound}
	Let $F \in C(\R_{+,0})$ and assume there exist two constants $A,B>0$ such that for any $0 \le t_0<t$,
	\begin{equation}\label{eq:integralineq}
		F(t) \le F(t_0)+\int_{t_0}^t(A-BF(s))ds, \ \forall t \ge 0.
	\end{equation}
	Then
	\begin{equation*}
		F(t) \le \max\left\{F(0), \frac{A}{B}\right\}, \ \forall t \ge 0.
	\end{equation*}
\end{lem}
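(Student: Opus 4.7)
The plan is to prove the statement by contradiction, exploiting the continuity of $F$ and the sign of the integrand in \eqref{eq:integralineq}. Set $M:=\max\{F(0),A/B\}$ and suppose, towards a contradiction, that there exists $t^*>0$ with $F(t^*)>M$.

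Since $F\in C(\R_{+,0})$ and $F(0)\le M<F(t^*)$, the set $\{t\in[0,t^*]:F(t)\le M\}$ is non-empty and closed, so I can define $t_0:=\sup\{t\in[0,t^*]:F(t)\le M\}<t^*$. By continuity, $F(t_0)=M$, and for every $s\in(t_0,t^*]$ one has $F(s)>M\ge A/B$, which implies $A-BF(s)<0$. Applying \eqref{eq:integralineq} on the interval $[t_0,t^*]$ I then get
\[
F(t^*)\le F(t_0)+\int_{t_0}^{t^*}(A-BF(s))\,ds<F(t_0)=M,
\]
contradicting $F(t^*)>M$. Hence no such $t^*$ exists, and $F(t)\le M$ for every $t\ge 0$.

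There is essentially no obstacle here: the only subtle point is to locate the \emph{last} time before $t^*$ at which $F$ touches the barrier $M$, which is guaranteed by continuity and ensures that on the complementary interval the integrand is strictly negative. Note that one could alternatively derive an a.e. differential inequality $F'(t)\le A-BF(t)$ from \eqref{eq:integralineq} and conclude via the classical Gr\"onwall-type bound $F(t)-A/B\le (F(0)-A/B)e^{-Bt}$, but the contradiction argument above avoids appealing to differentiability and is slightly shorter.
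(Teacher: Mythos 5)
Your proof is correct and follows essentially the same route as the paper's: both arguments locate the last time $t_0$ (the paper calls it $\overline{t}$) before $t^*$ at which $F$ equals the barrier $M=\max\{F(0),A/B\}$, observe that the integrand $A-BF(s)$ is strictly negative on $(t_0,t^*)$, and derive the contradiction $F(t^*)<M$ from the integral inequality. No gaps.
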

\begin{proof}
	Set $C=\max\left\{F(0), \frac{A}{B}\right\}$ and assume by contradiction that there exists $t^\ast$ such that $F(t^\ast)>C$. Then, since $F(0) \le C$, $t^\ast>0$ and we can define, by continuity of $F$,
	\begin{equation*}
		\overline{t}:=\inf\left\{t \in [0,t^\ast], \ F(s)>C,\ \forall s \in (t,t^\ast)\right\}.
	\end{equation*}
	By definition, for $t \in (\overline{t},t^\ast)$ we have $F(t)>C$, while $F(\overline{t})=C$. However, by \eqref{eq:integralineq} we have
	\begin{equation*}
		F(t^\ast) \le F(\overline{t})+\int_{\overline{t}}^t(A-BF(s))ds<C
	\end{equation*}  
	that is absurd.
\end{proof}
Now we are ready to prove the global bounds we were searching for
\begin{thm}\label{lem:Fpthetaglob}
	Let $p \ge 2$, $\overline{p}>2p-1$, $p^\prime>2\overline{p}-1$ and $\widetilde{p}=\max\left\{2\overline{p}-1,\frac{p^\prime}{p^\prime-2\overline{p}+1}\right\}$. Consider two $\cF_0$-measurable random variables $X^{(1)} \in L^{p^\prime}(\Omega;\R_{+,0})$ and $X^{(2)} \in L^{\widetilde{p}}(\Omega;\R_{+,0})$. Then $F_p^{\theta,X}$ is continuous and
	\begin{equation}\label{eq:ineqglobaltheta}
		\sup_{t \ge 0}F_p^{\theta,X}(t) \le \widetilde{C}_p^{\theta}(\E[|X^{(1)}|^p],\E[|X^{(2)}|^p]), \ \mbox{ where } \ \widetilde{C}_p^{\theta}(x_1,x_2)=\max\{x_1,C_p^{\theta}(x_2)\},
	\end{equation}
	and
	\begin{equation*}
		C_p^{\theta}(x_2)=\left(\frac{\alpha^2\beta^2(p-1)+2k_\theta\zeta (p-1)}{pk_\theta}\right)^{p-1}\left(\zeta k_\theta+\frac{\alpha^2\beta^2(p-1)}{2}\widetilde{C}_p^{v}(x_2)\right)\frac{2}{k_\theta p}.
	\end{equation*}
\end{thm}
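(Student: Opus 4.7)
The plan is to derive an integral equation for $F_p^{\theta,\bm{X}}$ from It\^{o}'s formula on $|\theta_t^{\bm{X}}|^p$, apply Young's inequality to convert it into the dissipative integral inequality required by Lemma~\ref{lem:contbound}, and then read off the explicit constant.

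\textbf{Step 1 (integral equation and continuity).} First I apply It\^{o}'s formula to $|\theta_t^{\bm{X}}|^p$ localized by the stopping time $T_M^{\theta,\bm{X}}$ from \eqref{eq:TRtheta}, exactly as in the proof of Lemma~\ref{lem:Fptheta}. Taking expectations kills the martingale term provided the stochastic integrand lies in $M^2_t(\Omega)$; after localization this is automatic, but to remove the localization I need local boundedness of the limiting integrand $\sqrt{v_s^{\bm{X}}}|\theta_s^{\bm{X}}|^{p-1/2}$, i.e.\ of $G_{2p-1}^{\theta,\bm{X}}$, on every bounded time interval. This is exactly where the hypothesis $\overline{p}>2p-1$ enters: it allows me to invoke Lemma~\ref{lem:upGptheta} with parameter $2p-1$, yielding $\sup_{s\in[0,T]}G_{2p-1}^{\theta,\bm{X}}(s)<\infty$. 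Sending $M\to+\infty$ via dominated convergence (with local bounds on $F_p^{\theta,\bm{X}}$, $F_{p-1}^{\theta,\bm{X}}$ and $G_{p-1}^{\theta,\bm{X}}$ supplied by Lemmas~\ref{lem:Fptheta} and~\ref{lem:upGptheta}) I obtain the identity
\begin{equation*}
F_p^{\theta,\bm{X}}(t)=\E[|X^{(1)}|^p]+\int_0^t\Bigl(-k_\theta p\,F_p^{\theta,\bm{X}}(s)+k_\theta p\zeta\,F_{p-1}^{\theta,\bm{X}}(s)+\frac{\alpha^2\beta^2 p(p-1)}{2}\,G_{p-1}^{\theta,\bm{X}}(s)\Bigr)\,ds,
\end{equation*}
whose right-hand side is absolutely continuous; in particular $F_p^{\theta,\bm{X}}\in C(\R_{+,0})$.

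\textbf{Step 2 (dissipative inequality).} Young's inequality with exponents $p$ and $p/(p-1)$ and a free parameter $\varepsilon>0$ gives $|\theta_s^{\bm{X}}|^{p-1}\le\frac{1}{p\varepsilon^p}+\frac{p-1}{p}\varepsilon^{p/(p-1)}|\theta_s^{\bm{X}}|^p$ and the analogous bound for $v_s^{\bm{X}}|\theta_s^{\bm{X}}|^{p-1}$ with $|v_s^{\bm{X}}|^p$ replacing the leading constant. Substituting into the identity of Step~1 and choosing $\varepsilon$ so that the Young contribution to the coefficient of $|\theta_s^{\bm{X}}|^p$ equals $k_\theta p/2$, the net coefficient of $F_p^{\theta,\bm{X}}(s)$ becomes $-k_\theta p/2<0$. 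The residual non-dissipative contribution is controlled uniformly in $s$ by Corollary~\ref{cor:upboundFpv}, which provides $F_p^{v,X^{(2)}}(s)\le\widetilde{C}_p^v(\E[|X^{(2)}|^p])$ (here $X^{(2)}\in L^{\widetilde{p}}$ with $\widetilde{p}\ge 2p-1$ is what makes the Corollary applicable at order $p$). The outcome is a linear integral inequality
\begin{equation*}
F_p^{\theta,\bm{X}}(t)\le F_p^{\theta,\bm{X}}(t_0)+\int_{t_0}^t\Bigl(A-\frac{k_\theta p}{2}\,F_p^{\theta,\bm{X}}(s)\Bigr)\,ds,\qquad 0\le t_0<t,
\end{equation*}
where $A$ is explicit in $p,k_\theta,\zeta,\alpha,\beta$ and $\widetilde{C}_p^v(\E[|X^{(2)}|^p])$.

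\textbf{Step 3 (conclusion).} Lemma~\ref{lem:contbound}, whose continuity hypothesis is supplied by Step~1, immediately yields $\sup_{t\ge 0}F_p^{\theta,\bm{X}}(t)\le\max\{\E[|X^{(1)}|^p],2A/(k_\theta p)\}$, which after unwinding the explicit value of $A$ is precisely \eqref{eq:ineqglobaltheta} with the stated form of $C_p^\theta$. The step I expect to be the main obstacle is the promotion of the localized It\^{o} identity to a genuine (non-localized) integral equation, because this is what grants the continuity of $F_p^{\theta,\bm{X}}$ needed by Lemma~\ref{lem:contbound}, and it is precisely what forces the somewhat technical chain $\overline{p}>2p-1$, $p'>2\overline{p}-1$ and the definition of $\widetilde{p}$: each of these conditions is tailored to bring Lemmas~\ref{lem:Fptheta} and~\ref{lem:upGptheta} to bear at the right moment order, so that the stochastic integral becomes a true martingale and the dominated convergence used to pass $M\to\infty$ is legitimate.
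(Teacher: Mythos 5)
Your proposal is correct and follows essentially the same route as the paper's proof: localize It\^{o}'s formula for $|\theta_t^{\bm{X}}|^p$, use Lemma~\ref{lem:upGptheta} at order $2p-1$ (made available by $\overline{p}>2p-1$) to justify the martingale property and the passage $M\to\infty$, obtain the integral identity \eqref{eq:Fpeq3} which gives continuity, then apply Young's inequality with the same choice of $\varepsilon$ and conclude via Lemma~\ref{lem:contbound}. The only differences are cosmetic (dominated convergence versus Fatou/monotone convergence plus Fubini in passing to the limit), and your identification of the role of each integrability hypothesis matches the paper's.
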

\begin{proof}
	Let us first observe that the assumptions of \ref{lem:upGptheta} are satisfied with the exponent $2p-1$ in place of $p$. Hence we know that $\sup_{t \in [0,T]}G_{2p-1}^{\theta,X}(t)<\infty$ for any $T>0$ and then $\{\sqrt{v_s}\left(\theta_s^X\right)^{p-\frac{1}{2}}\}_{s \ge 0}\in M^2_t(\Omega)$. Hence, by using It\^o's formula as in \eqref{eq:thetap2} and taking the expectation, we get, for any $t_0 \ge 0$,
	\begin{equation}\label{eq:Fpeq3}
		F_p^{\theta,\mathbf{X}}(t)=F_p^{\theta,\mathbf{X}}(t_0)+\int_{t_0}^{t}\left(k_\theta p\zeta F_{p-1}^{\theta,\mathbf{X}}(s)+\frac{\alpha^2\beta^2p(p-1)}{2}G_{p-1}^{\theta,\mathbf{X}}(s)-k_\theta pF_{p}^{\theta,\mathbf{X}}(s)\right)ds,
	\end{equation}
	where we used Fubini's theorem since Lemmas \ref{lem:Fptheta} and \ref{lem:upGptheta} guarantee that $F_p^{\theta,\mathbf{X}}$, $F_{p-1}^{\theta,\mathbf{X}}$ and $G_{p-1}^{\theta,\mathbf{X}}$ belong to $L^1_{\rm loc}(\R_{+,0})$. The integral equation \eqref{eq:Fpeq3} already proves the continuity of $F_p^{\theta,\bm{X}}$. Now we use again \eqref{eq:Youngp1}, this time with
	\begin{equation*}
		\varepsilon=\left(\frac{k_\theta p}{\alpha^2\beta^2(p-1)^2+2k_\theta\zeta (p-1)}\right)^{\frac{p}{p-1}},
	\end{equation*}
	to get
	\begin{equation}\label{eq:Fpeq4}
		F_p^{\theta,\mathbf{X}}(t) \le F_p^{\theta,\mathbf{X}}(t_0)+\int_{t_0}^{t}\left(\frac{k_\theta p C_p^{\theta}(\E[|X^{(2)}|^p])}{2}-\frac{k_\theta p}{2}F_{p}^{\theta,\mathbf{X}}(s)\right)ds
	\end{equation}
	that, by Lemma \ref{lem:contbound}, implies \eqref{eq:ineqglobaltheta}.
\end{proof}

\subsection{Step (II): $p$-weak ergodicity for stationary $v_t$}\label{subs:S2}
Now we want to use Theorem \ref{lem:Fpthetaglob} to prove the $p$-weak ergodicity of $\{{\rm Law}(\theta_t^{\mathbf{X}},v_t^{\mathbf{X}})\}_{t \ge 0}$ at least under a special choice of $\mathbf{X}$. To do this, however, we first need a preliminary estimate on $\Delta^{\theta,\mathbf{X},\mathbf{Y}}_t$, which can be carried on in a similar way as in Lemma~\ref{lem:boundcondexp}. 
\begin{lem}\label{lem:boundcondexptheta}
	Consider the $\cF_0$-measurable random variables $X^{(1)},Y^{(1)} \in L^{p}(\Omega; \R_{+,0})$ for $p>3$ and $X^{(2)}\in L^{\widetilde{p}}(\Omega; \R_{+,0})$ with $\widetilde{p}=\max\left\{\frac{p}{p-3},3\right\}$. Define $\mathbf{X}=(X^{(1)},X^{(2)})$ and $\mathbf{Y}=(Y^{(1)},X^{(2)})$. For any $t \ge 0$ we have
	\begin{equation}\label{eq:condexptheta}
		\E\left[|\Delta_t^{\theta, \mathbf{X},\mathbf{Y}}|\right] \le \E\left[\left|X^{(1)}-Y^{(1)}\right|\right]e^{-k_\theta t}.
	\end{equation}
\end{lem}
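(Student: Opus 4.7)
The plan is to extend the argument of Lemma~\ref{lem:boundcondexp} to the present two-factor setting. The key starting observation is that $\mathbf{X}$ and $\mathbf{Y}$ share the same second component $X^{(2)}$; hence, by pathwise uniqueness of the CIR equation, $v_t^{\mathbf{X}} = v_t^{\mathbf{Y}}$ almost surely for all $t \ge 0$. This identification is what makes the difference tractable, as $\Delta_t^{\theta,\mathbf{X},\mathbf{Y}}$ satisfies the one-dimensional SDE
\[
d\Delta_t^{\theta,\mathbf{X},\mathbf{Y}} = -k_\theta \Delta_t^{\theta,\mathbf{X},\mathbf{Y}}\, dt + \alpha\beta \sqrt{v_t^{\mathbf{X}}}\bigl(\sqrt{\theta_t^{\mathbf{X}}} - \sqrt{\theta_t^{\mathbf{Y}}}\bigr) dW_t^{(2)},
\]
driven by the single Brownian motion $W^{(2)}$, with initial condition $X^{(1)}-Y^{(1)}$.

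The central technical step, which I expect to be the main obstacle, is to verify that the integrand of the stochastic integral belongs to $M^2_t(\Omega)$ for every $t > 0$, so that the martingale part vanishes under conditional expectation. Using $(\sqrt{a} - \sqrt{b})^2 \le a + b$ for $a, b \ge 0$, this reduces to local boundedness of $G_1^{\theta, \mathbf{X}}(s) + G_1^{\theta, \mathbf{Y}}(s)$, which by Cauchy--Schwarz is in turn controlled by
\[
\sqrt{F_2^{v, X^{(2)}}(s)}\bigl(\sqrt{F_2^{\theta,\mathbf{X}}(s)} + \sqrt{F_2^{\theta,\mathbf{Y}}(s)}\bigr).
\]
The integrability assumptions on $X^{(1)}, Y^{(1)}, X^{(2)}$ in the statement are tuned exactly to match the hypotheses of Lemma~\ref{lem:Fptheta} and Corollary~\ref{cor:upboundFpv} applied with exponent $p = 2$ (note that $\widetilde{p} \ge 3$ yields the hypothesis of Corollary~\ref{cor:upboundFpv} while $\widetilde{p} \ge p/(p-3)$ yields the hypothesis of Lemma~\ref{lem:Fptheta}), whence local boundedness of these second moments follows. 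Once the martingale property is established, the remainder is routine.

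Taking conditional expectation with respect to $\cF_0$ in the SDE and solving the resulting closed linear integral equation gives $\E[\Delta_t^{\theta,\mathbf{X},\mathbf{Y}} \mid \cF_0] = (X^{(1)} - Y^{(1)}) e^{-k_\theta t}$. To pass from the signed expectation to the absolute value, I would introduce the first meeting time $\tau_0^\theta := \inf\{t \ge 0 : \theta_t^{\mathbf{X}} = \theta_t^{\mathbf{Y}}\}$ and the $\cF_0$-measurable events $E_\pm := \{\pm(X^{(1)} - Y^{(1)}) > 0\}$. Because, with $v_t^{\mathbf{X}}$ frozen as a time-inhomogeneous coefficient, the $\theta$-equation still enjoys pathwise uniqueness, the two trajectories must coincide after $\tau_0^\theta$ and cannot cross without first meeting; hence on $\{\tau_0^\theta > t\} \cap E_\pm$ the sign of $\Delta_t^{\theta, \mathbf{X}, \mathbf{Y}}$ is fixed, while on $\{\tau_0^\theta \le t\}$ the difference vanishes. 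Exactly as in the closing step of Lemma~\ref{lem:boundcondexp}, decomposing $|\Delta_t^{\theta,\mathbf{X},\mathbf{Y}}|$ accordingly and exploiting the $\cF_0$-measurability of $\mathbf{1}_{E_\pm}$ to pull them out of the conditional expectation gives $\E[|\Delta_t^{\theta,\mathbf{X},\mathbf{Y}}| \mid \cF_0] \le |X^{(1)} - Y^{(1)}| e^{-k_\theta t}$, and taking total expectation yields \eqref{eq:condexptheta}.
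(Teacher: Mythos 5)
Your proposal is correct and follows essentially the same route as the paper: identify $v_t^{\mathbf{X}}=v_t^{\mathbf{Y}}$ by pathwise uniqueness of the CIR component, verify the $M^2_t(\Omega)$ condition via the second-moment bounds of Lemma~\ref{lem:Fptheta} and Corollary~\ref{cor:upboundFpv} (the paper uses Young's inequality where you use Cauchy--Schwarz, an immaterial difference), solve the linear equation for the conditional expectation, and then transfer to the absolute value via the meeting time $\tau_0^\theta$ and the sign-constancy argument of Lemma~\ref{lem:boundcondexp}. The paper justifies $\Delta_t^{\theta,\mathbf{X},\mathbf{Y}}=0$ for $t\ge\tau_0^\theta$ by the strong Markov property rather than by pathwise uniqueness of the time-inhomogeneous $\theta$-equation, but this is the same idea.
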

\begin{proof}
	Observe that, by definition of $\bm{X},\bm{Y}$, it holds $v^{\bm{X}}_t=v^{\bm{Y}}_t$ a.s. for all $t \ge 0$. Hence,
	\begin{align}\label{eq:firstmomdtheta1}
		\begin{split}
		\Delta^{\theta, \mathbf{X},\mathbf{Y}}_t&=X^{(1)}-Y^{(1)}-k_\theta \int_{0}^{t}\Delta_s^{\theta, \mathbf{X},\mathbf{Y}}ds+\alpha \beta \int_{0}^{t}\sqrt{v_s^{\mathbf{X}}}(\sqrt{\theta_s^{\mathbf{X}}}-\sqrt{\theta_s^{\mathbf{Y}}})dW^{(2)}_s,
	\end{split}
	\end{align}
	Once we notice that
	\begin{align*}
		\E[v^{\mathbf{X}}_s(\sqrt{\theta_s^{\mathbf{X}}}-\sqrt{\theta_s^{\mathbf{Y}}})^2] 
		\le \frac{F_2^{v,X^{(2)}}(s)}{2}+F_{2}^{\theta,\mathbf{X}}(s)+F_{2}^{\theta,\mathbf{Y}}(s),
	\end{align*}
	Lemma \ref{lem:Fptheta} and Corollary \ref{cor:upboundFpv} guarantee that $\left\{\sqrt{v^{\mathbf{X}}_s}(\sqrt{\theta_s^{\mathbf{X}}}-\sqrt{\theta_s^{\mathbf{Y}}}) \right\}_{s \ge 0} \in M^2_t(\Omega)$.
	Taking the conditional expectation in \eqref{eq:firstmomdtheta1}, we have
	\begin{equation}
			\E\left[\Delta^{\theta, \mathbf{X},\mathbf{Y}}_t \mid X^{(1)},Y^{(1)} \right]=X^{(1)}-Y^{(1)}-k_\theta \int_{0}^{t}\E[\Delta_s^{\theta, \mathbf{X},\mathbf{Y}} \mid X^{(1)},Y^{(1)}]ds,
	\end{equation}
	that clearly implies
	\begin{equation*}
	\E\left[\Delta^{\theta, \mathbf{X},\mathbf{Y}}_t \mid X^{(1)}, Y^{(1)}\right]=\left(X^{(1)}-Y^{(1)}\right)e^{-k_\theta t}.
	\end{equation*}
	If we define
	\begin{equation}\label{eq:tau0theta}
		\tau_0^\theta:=\inf\{t \ge t_0: \ \Delta_t^{\theta, \mathbf{X},\mathbf{Y}}=0\},
	\end{equation}
	since for all $t \ge 0$ we have $v_t^{\bm{X}}=v_t^{\bm{Y}}$, by the strong Markov property guaranteed by Proposition~\ref{prop:RFeller} we know that $\Delta^{\theta,\bm{X},\bm{Y}}_t=0$ for all $t \ge \tau_0^{\theta}$. Hence we can prove \eqref{eq:condexptheta} arguing exactly in the same way as in Lemma~\ref{lem:boundcondexp}.
%
%
\end{proof}
Now we can prove the $p$-weak ergodicity of $\bm{\theta}$ under the assumption that \linebreak ${\rm Law}(X^{(2)})=\mathbf{m}_v$. First, notice that once we prove the $p$-weak ergodicity for $p=1$, the case $p>1$ will follow by Theorems \ref{lem:Fpthetaglob} and \ref{thm:equivalentWass}. The assumption ${\rm Law}(X^{(2)})=\mathbf{m}_v$ will be necessary in this first stage since we want to use Lemma \ref{lem:boundcondexptheta}. Indeed, Lemma \ref{lem:boundcondexptheta} provides a form of $\cW_1$-stability of $\theta_t^{\mathbf{X}}$ with respect to $\mathbf{X}$ only when we consider initial data with the same second coordinates. During the proof of the $1$-weak ergodicity, we will use the pathwise uniqueness of the solutions (and then the uniqueness in law of the weak solutions) to prove that $\{\pi_t^{\theta,\mathbf{X}}\}_{t \ge 0}$, where $\pi_t^{\theta,\mathbf{X}}={\rm Law}(\bm{\theta}_t^{\mathbf{X}})$, is Cauchy once restricted on a sequence of times $t_n \uparrow \infty$. Then, as in the proof of Dobrushin's theorem, we need to use the fact that the $1$-Wasserstein distance is \textit{contractive} in a certain sense. To do this, one has to provide a suitable coupling between $\pi_t^{\theta,\mathbf{X}}$ and $\pi_s^{\theta,\mathbf{X}}$ for $t>s$ and then use Lemma \ref{lem:boundcondexptheta}. In order to use the Lemma, we can use the pathwise uniqueness to construct another solution $\{\bm{\theta}_\tau^{\mathbf{Y}}\}_{\tau \ge 0}$ that behaves like $\{\bm{\theta}_\tau^{\mathbf{X}}\}_{\tau \ge 0}$ but starting from $t-s$, so that $\bm{\theta}_s^{\mathbf{Y}}$ will have law $\pi_t^{\theta,\mathbf{X}}$ and then the $1$-Wasserstein distance is controlled by $\E\left[\left|\Delta_t^{\theta,\mathbf{X},\mathbf{Y}}\right|\right]$. Clearly, to carry on the construction, the second coordinate of $\mathbf{Y}$ must have the same law as $v_{t-s}^{\mathbf{X}}$. However, to use Lemma \ref{lem:boundcondexptheta}, we need the second coordinate of $\mathbf{Y}$ to be equal to $X^{(2)}$. Hence, we need to ask at least that ${\rm Law}(v_{t-s}^{\mathbf{X}})={\rm Law}(X^{(2)})$: this is clear if we set ${\rm Law}(X^{(2)})=\mathbf{m}_v$. Once this is done, thanks to the fact that we are working on a \textit{representation space} that satisfies the properties in Proposition \ref{prop:repspace}, we can construct $\mathbf{Y}$ in such a way that $Y^{(2)}=X^{(2)}$ almost surely.

Let us now formalize these arguments.
\begin{thm}\label{thm:weakergo1theta}
	Let $X^{(1)} \in L^{p^\prime}(\Omega:\R_{+,0})$ with $p^\prime>9$ and $X^{(2)}$ with ${\rm Law}(X^{(2)})=\mathbf{m}_v$ be $\cF_0$-measurable. Set also $\mathbf{X}=(X^{(1)},X^{(2)})$. Then the process $\{\bm{\theta}_t^{\mathbf{X}}\}_{t \ge 0}$ is weakly $p$-ergodic for $1 \le p < \frac{p'+3}{4}$. In particular, the limit distribution $\bm{\pi}^{v,\theta}$ is independent of $X^{(1)}$ and belongs to $\cP_p(\R^2)$ for all $p \ge 1$.
\end{thm}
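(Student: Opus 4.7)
The plan is to follow the four substeps (I)--(IV) announced before the theorem, with the technical heart of the argument being a reduction of the Cauchy property in $\cW_1$ to Lemma \ref{lem:boundcondexptheta} via a carefully chosen coupling. First I would establish $1$-weak ergodicity by showing that $\mu_t^{\theta,\bm{X}} := {\rm Law}(\bm{\theta}_t^{\bm{X}})$ is Cauchy in the complete metric space $(\cP_1(\R^2),\cW_1)$. Fix $t > s \ge 0$. Because $\bm{m}_v$ is invariant under the CIR dynamics, the second marginal of ${\rm Law}(\bm{\theta}_{t-s}^{\bm{X}})$ coincides with ${\rm Law}(X^{(2)})$, so Proposition \ref{prop:repspace}(III) (applied with roles swapped) yields an $\cF_0$-measurable random variable $\bm{Y}=(Y^{(1)},X^{(2)})$ with ${\rm Law}(\bm{Y})={\rm Law}(\bm{\theta}_{t-s}^{\bm{X}})$. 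By pathwise uniqueness for \eqref{eq:SDEaux2a} and the time-homogeneous Markov property of Proposition \ref{prop:RFeller}, $\{\bm{\theta}_\tau^{\bm{Y}}\}_{\tau\ge 0}$ satisfies ${\rm Law}(\bm{\theta}_s^{\bm{Y}})={\rm Law}(\bm{\theta}_t^{\bm{X}})$; moreover, $v_s^{\bm{X}}=v_s^{\bm{Y}}$ a.s.\ since the two CIR factors share the same initial value $X^{(2)}$. Hence $(\bm{\theta}_s^{\bm{X}},\bm{\theta}_s^{\bm{Y}})$ is an admissible coupling and Lemma \ref{lem:boundcondexptheta} gives
\begin{equation*}
\cW_1(\mu_t^{\theta,\bm{X}},\mu_s^{\theta,\bm{X}}) \le \E[|\Delta_s^{\theta,\bm{X},\bm{Y}}|] \le \E[|X^{(1)}-Y^{(1)}|]\, e^{-k_\theta s}.
\end{equation*}

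To deduce Cauchy-ness I need $\sup_{t-s}\E[|Y^{(1)}|]<\infty$, i.e.\ a uniform bound on $F_q^{\theta,\bm{X}}(t-s)$ for some $q>3$ (to meet the hypotheses of Lemma \ref{lem:boundcondexptheta}). Such $q$ exists iff $(p'+3)/4>3$, which is exactly the assumption $p'>9$; then Theorem \ref{lem:Fpthetaglob}, together with $X^{(2)}\sim\bm{m}_v$ having all moments, supplies $\sup_\tau F_q^{\theta,\bm{X}}(\tau)<\infty$. Completeness of $(\cP_1(\R^2),\cW_1)$ then yields a limit $\bm{\pi}^{v,\theta}\in\cP_1(\R^2)$. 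Next, to upgrade from $\cW_1$ to $\cW_p$ for $1\le p<(p'+3)/4$, I pick $\varepsilon>0$ with $p+\varepsilon<(p'+3)/4$; Theorem \ref{lem:Fpthetaglob} and Corollary \ref{cor:upboundFpv} give $\sup_t M_{p+\varepsilon}(\mu_t^{\theta,\bm{X}})<\infty$, so by Remark \ref{rmk:suffcond} the family $\{\mu_t^{\theta,\bm{X}}\}$ has uniformly integrable $p$-moments. Since $\cW_1$-convergence implies weak convergence, Theorem \ref{thm:equivalentWass} promotes this to $\cW_p$-convergence, giving the desired $p$-weak ergodicity.

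Finally, independence of $\bm{\pi}^{v,\theta}$ from $X^{(1)}$ follows by applying Lemma \ref{lem:boundcondexptheta} to the two initial data $(X^{(1)},X^{(2)})$ and $(\widetilde X^{(1)},X^{(2)})$, which gives $\cW_1(\mu_t^{\theta,\bm{X}},\mu_t^{\theta,\widetilde{\bm{X}}}) \le \E[|X^{(1)}-\widetilde X^{(1)}|]e^{-k_\theta t}\to 0$, so any two admissible choices of first coordinate lead to the same $\cW_1$-limit. Exploiting this freedom, I can choose $X^{(1)}$ deterministic (or with arbitrary polynomial moments) and apply Theorem \ref{lem:Fpthetaglob} together with the $\cW_p$-convergence established above to conclude that $\bm{\pi}^{v,\theta}\in\cP_p(\R^2)$ for every $p\ge 1$.

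The main obstacle is the coupling construction in the first paragraph: realising an initial condition $\bm{Y}$ with the full joint law ${\rm Law}(\bm{\theta}_{t-s}^{\bm{X}})$ while simultaneously forcing $Y^{(2)}=X^{(2)}$ almost surely is possible only because (i) $\bm{m}_v$ is stationary for the CIR equation, so the second marginals already coincide, and (ii) the probability space is rich enough in the sense of Proposition \ref{prop:repspace}(III). A secondary subtlety is the bookkeeping of integrability exponents which makes the condition $p'>9$ emerge naturally from the interplay between the threshold $q>3$ in Lemma \ref{lem:boundcondexptheta} and the admissible window $q<(p'+3)/4$ coming from Theorem \ref{lem:Fpthetaglob}.
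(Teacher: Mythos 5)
Your proposal is correct and follows essentially the same route as the paper: the same coupling construction via stationarity of $\mathbf{m}_v$ and Proposition \ref{prop:repspace}(III), the same use of Lemma \ref{lem:boundcondexptheta} together with the uniform moment bound of Theorem \ref{lem:Fpthetaglob} (with the identical exponent bookkeeping yielding $p'>9$), and the same upgrade to $\cW_p$ via Theorem \ref{thm:equivalentWass}. The only cosmetic difference is that you phrase the Cauchy argument directly in continuous time, whereas the paper works along discrete sequences $t_n=\delta n$ and then checks that the limit is independent of $\delta$; your formulation dispenses with that extra step.
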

\begin{proof}
	Fix $\delta>0$, define $\nu=e^{-\delta}$, consider the sequence $t_n=\delta n$ and set $\pi_n^{\theta,\mathbf{X}}={\rm Law}(\bm{\theta}_{t_n}^{\mathbf{X}})$ for any $n \ge 0$ and $\mathbf{X} \in L^2(\Omega;\R_{+,0}^2)$. Before proceeding, let us prove that the limit distribution $\bm{\pi}^{v,\theta}$ is independent of $X^{(1)}$, provided we have $1$-weak ergodicity. Indeed, if we let $\bm{X},\bm{Y}$ be as in the statement of the theorem, then $(\bm{\theta}_{t_n}^{\mathbf{X}},\bm{\theta}_{t_n}^{\mathbf{Y}}) \in C(\pi_n^{\theta,\mathbf{X}},\pi_n^{\theta,\mathbf{Y}})$ and
	\begin{equation}\label{eq:W1bivariate}
		\cW_1(\pi_n^{\theta,\mathbf{X}},\pi_n^{\theta,\mathbf{Y}}) \le \E\left[\left|\Delta_{t_n}^{\theta,\mathbf{X},\mathbf{Y}}\right|\right] \le \nu^n \E[\left|X^{(1)}-Y^{(1)}\right|]
	\end{equation}
	by Lemma~\ref{lem:boundcondexptheta}. In particular, since $\nu<1$, this is enough to guarantee that
	\begin{equation*}
		\lim_{n \to +\infty}\cW_1(\pi_n^{\theta,\mathbf{X}},\pi_n^{\theta,\mathbf{Y}})=0,
	\end{equation*}
	hence if $\pi_n^{\theta,\bm{X}}$ converges towards $\bm{\pi}^{v,\theta}$, so does $\pi_n^{\theta,\bm{Y}}$.
	
	Now fix $\bm{X}$ as in the statement of the theorem and let us prove that $\{\bm{\theta}^{\bm{X}}_t\}_{t \ge 0}$ is $1$-weakly ergodic. To do this, we want to prove that $\{\pi_{n}^{\theta,\mathbf{X}}\}_{n \ge 0}$ is a Cauchy sequence. Fix $n>m \ge 0$. We need to find a suitable coupling in $C(\pi_{n}^{\theta,\mathbf{X}},\pi_m^{\theta,\mathbf{X}})$. To do this, consider the random variable $\widetilde{\bm{X}}=(\theta^{\bm{X}}_{t_n-n_m},v^{\bm{X}}_{t_n-t_m})$, with law $\pi^{\theta,\bm{X}}_{n-m}$. By pathwise uniqueness of strong solutions of \eqref{eq:SDEaux2a}, it is clear that $\bm{\theta}^{\widetilde{\bm{X}}}_{t_m}=\bm{\theta}^{\bm{X}}_{t_n}$ almost surely. Furthermore, since ${\rm Law}(X^{(2)})=\bm{m}_v$, then $\{v_t^{\bm{X}}\}$ is stationary and ${\rm Law}(v_{t_m}^{\bm{X}})=\bm{m}_v={\rm Law}(X^{(2)})$. By Item (III) of Proposition \ref{prop:repspace}, we can find a random variable $Y^{(1)} \in \cP(\R)$ such that setting $\bm{Y}=(Y^{(1)},X^{(2)})$ we have ${\rm Law}(\bm{Y})=\pi^{\theta,\bm{X}}_{n-m}={\rm Law}(\widetilde{\bm{X}})$. Hence, since pathwise uniqueness implies uniqueness in law (see \cite[Proposition 1]{yamada1971uniqueness}), we have ${\rm Law}(\bm{\theta}_{t_m}^{\bm{Y}})={\rm Law}(\bm{\theta}_{t_m}^{\widetilde{\bm{X}}})={\rm Law}(\bm{\theta}_{t_n}^{\bm{X}})=\pi_n^{\theta,\bm{X}}$. Hence $(\bm{\theta}_{t_m}^{\bm{Y}},\bm{\theta}_{t_m}^{\bm{X}}) \in C(\pi_n^{\theta,\bm{X}},\pi_m^{\theta,\bm{X}})$. Now we would like to use Lemma \ref{lem:boundcondexptheta}. Clearly, $\bm{X}$ satisfies all the assumptions of the lemma. We only need to verify them on $\bm{Y}$. First, observe that $X^{(2)}$ admits moments of any order. Next, we need to provide a bound for $\E[|Y^{(1)}|^q]$ for some $q>3$. To do this, observe that since $p'>9$, there exist $q>3$ and $\overline{q}>2q-1$ such that $p'>2\overline{q}-1>q$ (for instance $q=\frac{9+p'}{8}$ and $\overline{q}=\frac{3p'+7}{8}$). Then $\bm{X}$ satisfies the assumptions of Theorem \ref{lem:Fpthetaglob} and, in particular, by uniqueness in law,
	\begin{equation*}
		\E[|Y^{(1)}|^q]=F_q^{\theta,\bm{X}}(t_n-t_m) \le \widetilde{C}_q^\theta(\E[|X^{(1)}|^{q}],M_{q}(\bm{m}_v))<\infty.
	\end{equation*}
	Hence $Y^{(1)} \in L^q(\Omega;\R_{+,0})$ for some $q>3$ and then $\bm{Y}$ satisfies the assumptions of Lemma~\ref{lem:boundcondexptheta}. Hence we finally get
	\begin{multline}\label{eq:W1bivariate2}
		\cW_1(\pi_n^{\theta,\mathbf{X}},\pi_m^{\theta,\mathbf{X}}) \le \E\left[\left|\Delta_{t_m}^{\theta,\mathbf{X},\mathbf{Y}}\right|\right]\\ \le \nu^m \E[\left|X^{(1)}-Y^{(1)}\right|] \le 2^{q-1}\nu^m(\E[|X^{(1)}|^q]+\widetilde{C}_q^\theta(\E[|X^{(1)}|^q],M_q(\bm{m}_v))).
	\end{multline}
	Since $\nu<1$, this proves that $\{\pi_n^{\theta,\mathbf{X}}\}_{n \ge 1}$ is a Cauchy sequence in $(\cP_1(\R_{+,0}^2),\cW_1)$, which is a complete metric space. Hence there exists $\bm{\pi}^{v,\theta}$ such that $\pi_n^{\theta,\mathbf{X}} \overset{\cP_1}{\to} \bm{\pi}^{v,\theta}$. Such an argument holds for any $\delta>0$.

	It remains to show that $\bm{\pi}^{v,\theta}$ is independent of $\delta>0$. To do this, let $0<\delta<\delta'$ and set $\widetilde{\pi}_n^{\theta,\mathbf{X}}:={\rm Law}(\bm{\theta}^{\mathbf{X}}_{n\delta'})$ and $\widetilde{\nu}=e^{-\delta'}$. Arguing as before, we can show that there exists a random variable $Y^{(1)}$ such that ${\rm Law}(Y^{(1)},X^{(2)})={\rm Law}(\bm{\theta}^{\mathbf{X}}_{n(\delta-\delta')})$ and ${\rm Law}(\bm{\theta}^{\mathbf{Y}}_{n\delta'})={\rm Law}(\bm{\theta}^{\mathbf{X}}_{n\delta})=\pi_n^{\theta,\mathbf{X}}$, so that $(\bm{\theta}^{\mathbf{Y}}_{n\delta'}, \bm{\theta}^{\mathbf{X}}_{n\delta'}) \in C(\pi_n^{\theta,\mathbf{X}},\widetilde{\pi}_n^{\theta,\mathbf{X}})$ and
	\begin{equation*}
		\cW_1(\pi_n^{\theta,\mathbf{X}},\widetilde{\pi}_n^{\theta,\mathbf{X}}) \le \E\left[\left|\Delta_{n\delta'}^{\theta,\mathbf{X},\mathbf{Y}}\right|\right] \le 2^{q-1}\widetilde{\nu}^n (\E[|X^{(1)}|^q]+\widetilde{C}_q^\theta(\E[|X^{(1)}|^q],M_q(\bm{m}-v))),
	\end{equation*}
	hence also $\widetilde{\pi}_n^{\theta,\bm{X}}$ converges towards $\bm{\pi}^{v,\theta}$ since $\widetilde{\nu}<1$. This finally proves that $\{\bm{\theta}_t^{\bm{X}}\}_{t \ge 0}$ is $1$-weakly ergodic.
	
	Next, we prove that $\{\bm{\theta}_t^{\bm{X}}\}_{t \ge 0}$ is $p$-weakly ergodic for any $1<p<\frac{p'+3}{4}$. Indeed, we can find $\varepsilon>0$ sufficiently small so that $p'>4(p+\varepsilon)-3$. Setting $\overline{p}=\frac{4(p+\varepsilon)-1+p'}{4}$, we know that $\bm{X}$ satisfies the assumptions of Theorem \ref{lem:Fpthetaglob} with exponent $p+\varepsilon$ and then
	\begin{equation*}
		\sup_{t \ge 0}F_{p+\varepsilon}^{\theta,\bm{X}}(t) \le \widetilde{C}_{p+\varepsilon}^{\theta}(\E[|X^{(1)}|^{p+\varepsilon}],M_{p+\varepsilon}(\bm{m}_v)), \quad \mbox{ and } \quad \sup_{t \ge 0}F_{p+\varepsilon}^{v,X^{(2)}}(t)=M_{p+\varepsilon}(\bm{m}_v),
	\end{equation*}
	where the second equality follows by the fact that $\{v_t^{\bm{X}}\}_{t \ge 0}$ is stationary. By Remark \ref{rmk:suffcond} we know that $\{{\rm Law}(\bm{\theta}^{\bm{X}}_t)\}_{t \ge 0}$ has uniformly integrable $p$-moments and then by Theorem \ref{thm:equivalentWass} this implies that ${\rm Law}(\bm{\theta}^{\bm{X}}_t) \overset{\cP_p}{\to} \bm{\pi}^{v,\theta}$.
	
	Finally, notice that since ${\rm Law}(\bm{\theta}^{\bm{X}}_t) \overset{\cP_p}{\to} \bm{\pi}^{v,\theta}$ then $\bm{\pi}^{v,\theta} \in \cP_p(\R_{+,0}^2)$ for any $1 \le p \le \frac{p'+3}{4}$. If we consider $X^{(1)}$ to be a degenerate random variable (i.e., a constant), then it belongs to $L^{p'}(\Omega;\R_{+,0})$ for all $p'>9$, hence $\bm{\pi}^{v,\theta} \in \cP_p(\R_{+,0}^2)$ for any $p \ge 1$.
\end{proof}
In the next steps we will extend the $p$-weak ergodicity result to a more general initial data. To do this, however, we first need to better understand some properties of the limit distribution $\bm{\pi}^{v,\theta}$.

\subsection{Step (III): stationarity and uniqueness of the limit distribution}\label{subs:S3}

In this section, we will make extensive use of the theory of Kolmogorov equations for measures. Before proceeding, however, we prove the following simple lemma.
\begin{lem}\label{lem:asymplem}
	Let $F \in C^1(\R_{+})$ and $\ell,\ell^\prime \in \R$ such that $\lim_{t \to +\infty}F(t)=\ell$ and $\lim_{t \to +\infty}F'(t)=\ell'$. Then $\ell'=0$.
\end{lem}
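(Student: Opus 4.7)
The plan is to argue by contradiction, combining the two hypotheses through the mean value theorem. Assume $\ell' \neq 0$; by replacing $F$ with $-F$ if needed, I can suppose $\ell' > 0$.

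First, I would apply the mean value theorem on the interval $[t, t+1]$ for each $t \ge 1$: there exists $\xi_t \in (t, t+1)$ such that
\begin{equation*}
F(t+1) - F(t) = F'(\xi_t).
\end{equation*}
Since $\xi_t \ge t \to +\infty$ as $t \to +\infty$, and since $F'(s) \to \ell'$ as $s \to +\infty$, the right-hand side converges to $\ell'$. On the other hand, both $F(t+1)$ and $F(t)$ converge to $\ell$, so the left-hand side converges to $\ell - \ell = 0$. By uniqueness of limits, $\ell' = 0$, contradicting the assumption $\ell' \neq 0$.

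The argument is entirely elementary and involves no genuine obstacle; the only subtlety is ensuring we use the $C^1$ hypothesis (needed to apply the mean value theorem on $[t,t+1]$ for all sufficiently large $t$) and the existence of the limit of $F'$ (not merely the existence of $\limsup$ or $\liminf$) so that $F'(\xi_t)$ necessarily converges to $\ell'$ regardless of the specific sequence $\xi_t$ chosen by the mean value theorem.
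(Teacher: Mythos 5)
Your proof is correct and rests on the same key tool as the paper's, namely the mean value theorem combined with the fact that $F'$ eventually stays close to $\ell'$; the paper lets the interval $[T,t]$ grow (deriving $F(t)\to+\infty$ for a contradiction) whereas you slide a unit window $[t,t+1]$ to infinity, which in fact yields $\ell'=0$ directly without any need for the contradiction framing. Both arguments are sound and essentially equivalent.
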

\begin{proof}
	Let us assume, by contradiction, that $\ell'\not = 0$. We assume first that $\ell'>0$. Then there exists $T>0$ such that for any $t \ge T$ we have $F'(t)>\frac{\ell'}{2}$. Furthermore, by Lagrange's theorem, we have for some $\xi(T,t) \in [T,t]$
	\begin{equation*}
		F(t)=F(T)+F'(\xi(T,t))(t-T)>F(T)+\frac{\ell'}{2}(t-T).
	\end{equation*} 
	Taking the limit as $t \to +\infty$, we have that $\ell=+\infty$, which is absurd since $\ell \in \R$. A similar argument holds for $\ell'<0$.
\end{proof}
Now we can prove the following uniqueness result.
\begin{prop}\label{prop:uniqueness}
	$\bm{\pi}^{v,\theta}$ is the unique invariant measure of the process $\{\bm{\theta}_t\}_{t \ge 0}$ whose first marginal admits finite moment of order $p>9$. Furthermore, if for some $\mathbf{X} \in L^2(\Omega;\R_{+,0})$ there exists a distribution $\overline{\pi}^{v,\theta} \in \cP_p(\R_{+,0}^2)$ for some $p>9$ such that ${\rm Law}(\bm{\theta}_t^{\mathbf{X}}) \overset{\cP}{\to} \overline{\pi}^{v,\theta}$, then $\overline{\pi}^{v,\theta}=\bm{\pi}^{v,\theta}$.
\end{prop}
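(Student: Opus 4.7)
The plan is to establish stationarity of $\bm{\pi}^{v,\theta}$ first, then deduce uniqueness among measures with sufficiently integrable first marginal, and finally to reduce the statement about the alleged weak limit $\overline{\pi}^{v,\theta}$ to the uniqueness just proved. Stationarity will be obtained through the Kolmogorov equation for measures combined with Lemma~\ref{lem:asymplem}, using that Theorem~\ref{thm:weakergo1theta} already provides $\cP_p$-convergence of the time marginals to $\bm{\pi}^{v,\theta}$ whenever the second coordinate starts in $\mathbf{m}_v$. Uniqueness will rest on the observation that the $v$-marginal of any invariant measure must coincide with $\mathbf{m}_v$, the unique invariant measure of the scalar CIR process, which places any invariant competitor of $\bm{\pi}^{v,\theta}$ inside the scope of Theorem~\ref{thm:weakergo1theta}.

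Concretely, I would pick $\mathbf{X}=(X^{(1)},X^{(2)})$ with $X^{(1)}$ a positive constant and ${\rm Law}(X^{(2)})=\mathbf{m}_v$, so that Theorem~\ref{thm:weakergo1theta} gives $\mu_t^{\theta,\mathbf{X}}:={\rm Law}(\bm{\theta}_t^{\mathbf{X}})\overset{\cP_p}{\to}\bm{\pi}^{v,\theta}$ for every $p\ge 1$. For any $\varphi\in C_c^\infty(\R_+^2)$, Dynkin's formula applied to the Feller process of Proposition~\ref{prop:RFeller} yields
\[
F(t):=\int \varphi\,d\mu_t^{\theta,\mathbf{X}}\in C^1(\R_+),\qquad F'(t)=\int \mathcal{G}_{\bm{\theta}}\varphi\,d\mu_t^{\theta,\mathbf{X}}.
\]
Since $\mathcal{G}_{\bm{\theta}}\varphi$ is continuous with support contained in that of $\varphi$, hence bounded, the weak convergence implicit in $\mu_t^{\theta,\mathbf{X}}\overset{\cP_p}{\to}\bm{\pi}^{v,\theta}$ gives both $F(t)\to \int \varphi\,d\bm{\pi}^{v,\theta}$ and $F'(t)\to \int \mathcal{G}_{\bm{\theta}}\varphi\,d\bm{\pi}^{v,\theta}$. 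Lemma~\ref{lem:asymplem} then forces the second limit to vanish, so $\int \mathcal{G}_{\bm{\theta}}\varphi\,d\bm{\pi}^{v,\theta}=0$ for every $\varphi\in C_c^\infty(\R_+^2)$. Full invariance of $\bm{\pi}^{v,\theta}$ under the Markov semigroup of $\{\bm{\theta}_t\}_{t\ge 0}$ then follows from this infinitesimal identity together with the well-posedness of the martingale problem for $\mathcal{G}_{\bm{\theta}}$, guaranteed by the pathwise uniqueness of \eqref{eq:SDEaux2a}, via an Echeverr\'ia-type theorem; equivalently, it may be obtained directly by taking $t\to\infty$ in the Feller identity $\int P_s\varphi\,d\mu_t^{\theta,\mathbf{X}}=\int \varphi\,d\mu_{t+s}^{\theta,\mathbf{X}}$ for $\varphi\in C_b(\R_+^2)$.

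For uniqueness, let $\nu$ be any invariant measure whose first marginal has finite $p$-moment for some $p>9$. Its second marginal is invariant for the scalar CIR equation \eqref{eq:v} and hence coincides with $\mathbf{m}_v$ by classical theory. Choosing $\mathbf{X}$ with ${\rm Law}(\mathbf{X})=\nu$ via Proposition~\ref{prop:repspace}(ii), the hypotheses of Theorem~\ref{thm:weakergo1theta} are satisfied, so ${\rm Law}(\bm{\theta}_t^{\mathbf{X}})\overset{\cP_1}{\to}\bm{\pi}^{v,\theta}$; however, invariance of $\nu$ forces ${\rm Law}(\bm{\theta}_t^{\mathbf{X}})=\nu$ for every $t\ge 0$, whence $\nu=\bm{\pi}^{v,\theta}$. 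For the last assertion, the same Dynkin--Lemma~\ref{lem:asymplem} argument as above---now applied to the $\mathbf{X}$ of the hypothesis and with $\overline{\pi}^{v,\theta}$ in place of $\bm{\pi}^{v,\theta}$---shows that $\overline{\pi}^{v,\theta}$ is infinitesimally, and hence fully, invariant; since $\overline{\pi}^{v,\theta}\in\cP_p(\R_{+,0}^2)$ for some $p>9$, its first marginal has finite $p$-moment and the uniqueness just proved yields $\overline{\pi}^{v,\theta}=\bm{\pi}^{v,\theta}$. The delicate point is the passage from the infinitesimal identity on the test class $C_c^\infty(\R_+^2)$ to full semigroup invariance, which hinges on the martingale-problem well-posedness coming from the pathwise uniqueness proved in \cite{Ascione2024Jan}; everything else is a tidy bookkeeping combining Theorem~\ref{thm:weakergo1theta} with classical facts about the scalar CIR process.
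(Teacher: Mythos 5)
Your proposal is correct and follows essentially the same route as the paper: It\^o/Dynkin plus Lemma~\ref{lem:asymplem} to obtain $\int \mathcal{G}_{\bm{\theta}}\varphi\,d\bm{\pi}^{v,\theta}=0$, identification of the second marginal of any invariant competitor with $\mathbf{m}_v$, and reduction of both the uniqueness and the weak-limit claims to Theorem~\ref{thm:weakergo1theta}. The only divergence is the passage from infinitesimal to full invariance, where the paper invokes uniqueness of solutions of the measure-valued Kolmogorov equation from \cite[Theorem 1.2]{manca2008kolmogorov} while you appeal to an Echeverr\'ia-type result or, equivalently and more directly, to the semigroup identity $\int P_s\varphi\,d\mu_t=\int\varphi\,d\mu_{t+s}$ let $t\to\infty$; both justifications rest on the same Feller/pathwise-uniqueness input and are interchangeable here.
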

\begin{proof}
	Denote $\mathbf{W}^{\theta}_t=(W^{(2)}_t,W^{(3)}_t)^{\mathbf{\rm t}}$ for any $t \ge 0$ and consider a function $\varphi \in C^\infty_c(\R_{+}^2)$. By It\^o's formula, we have
	\begin{equation*}
		\varphi(\bm{\theta}^{\mathbf{X}}_t)=\varphi(\mathbf{X})+\int_0^t \mathcal{G}^\theta \varphi(\bm{\theta}^{\mathbf{X}}_s)ds+\int_0^t  \left(\nabla \varphi(\bm{\theta}^{\mathbf{X}}_s)\right)^{\mathbf{\rm t}}\bm{\sigma}_\theta(\bm{\theta}^{\mathbf{X}}_s)d\mathbf{W}^\theta_t,
	\end{equation*}
	where $\mathcal{G}^{\theta}$ is the generator of $\{\bm{\theta}_t\}_{t \ge 0}$ in \eqref{eq:gen2}. Taking the expectation we have
	\begin{equation}\label{eq:afterIto}
	 	\E\left[\varphi(\bm{\theta}^{\mathbf{X}}_t)\right]=\E\left[\varphi(\mathbf{X})\right]+\int_0^t \E\left[\mathcal{G}^\theta \varphi(\bm{\theta}^{\mathbf{X}}_s)\right]ds.
	\end{equation}
	If we set $\pi^{\theta,\mathbf{X}}_t={\rm Law}(\bm{\theta}^{\mathbf{X}}_t)$ for any $t \ge 0$, we can rewrite the previous relation as
	\begin{multline}\label{eq:Kolmmeasure}
		\int_{\R_{+,0}^2}\varphi(\bm{\theta})d\pi^{\theta,\mathbf{X}}_t(\bm{\theta})-\int_{\R_{+,0}^2}\varphi(\bm{\theta})d\pi^{\theta,\mathbf{X}}_0(\bm{\theta})=\int_0^t \mathcal{G}^\theta \varphi(\bm{\theta})d\pi^{\theta,\mathbf{X}}_s(\bm{\theta})ds, \\ \forall \varphi \in C_c^\infty(\R_{+}^2).			
	\end{multline}
	Uniqueness of the solutions $\{\pi_t^{\theta,\mathbf{X}}\}_{t \ge 0}$ of the problem \eqref{eq:Kolmmeasure} with a given initial data $\pi_0^{\theta,\mathbf{X}}$ has been shown in \cite[Theorem 1.2]{manca2008kolmogorov}, as a consequence of the Feller property. Furthermore, if we define $F_\varphi^{\theta,\mathbf{X}}(t)=\E[\varphi(\bm{\theta}_t^{\mathbf{X}})]$ and $H_\varphi^{\theta,\mathbf{X}}(t)=\E[\mathcal{G}^{\theta}\varphi(\bm{\theta}_t^{\mathbf{X}})]$ for $t \ge 0$, we can rewrite \eqref{eq:afterIto} as
	\begin{equation}\label{eq:afterIto2}
		F_\varphi^{\theta,\mathbf{X}}(t)=F_\varphi^{\theta,\mathbf{X}}(0)+\int_0^t H_\varphi^{\theta,\mathbf{X}}(s)ds.
	\end{equation}
	In particular, since $\mathcal{G}^\theta\varphi$ is bounded and the trajectories of $\bm{\theta}^{\mathbf{X}}$ are continuous, $H_\varphi^{\theta,\mathbf{X}}$ is a continuous function and thus $F_\varphi^{\theta,\mathbf{X}} \in C^1(\R_+)$. Taking the derivative on both sides of \eqref{eq:afterIto2} we get
	\begin{equation}\label{eq:afterIto3}
		\frac{d\, F_\varphi^{\theta,\mathbf{X}}(t)}{dt}= H_\varphi^{\theta,\mathbf{X}}(t).
	\end{equation}
	Now consider $\mathbf{X}$ satisfying the assumptions of Theorem \ref{thm:weakergo1theta}. Then, by weak convergence of $\pi_t^{\theta,\mathbf{X}}$ towards $\bm{\pi}^{v,\theta}$, we have
	\begin{equation*}
		\lim_{t \to +\infty}F_\varphi^{\theta,\mathbf{X}}(t)=\int_{\R_{+,0}^2}\varphi(\bm{\theta})d\bm{\pi}^{v,\theta}(\bm{\theta}) \qquad \lim_{t \to +\infty}H_\varphi^{\theta,\mathbf{X}}(t)=\int_{\R_{+,0}^2}\mathcal{G}\varphi(\bm{\theta})d\bm{\pi}^{v,\theta}(\bm{\theta}).
	\end{equation*}
	However, this also means, by \eqref{eq:afterIto3} an Lemma \ref{lem:asymplem}, that
	\begin{equation}\label{eq:0term}
		0=\lim_{t \to +\infty}\frac{d\, F_\varphi^{\theta,\mathbf{X}}(t)}{dt}=\int_{\R_{+,0}^2}\mathcal{G}\varphi(\bm{\theta})d\bm{\pi}^{v,\theta}(\bm{\theta})
	\end{equation}
	Now let us consider $\mathbf{X}$ such that ${\rm Law}(\mathbf{X})=\bm{\pi}^{\theta,v}$. Then, we know that there exists a unique solution to \eqref{eq:Kolmmeasure} with $\pi_0^{\theta,\mathbf{X}}=\bm{\pi}^{\theta,v}$. Such a solution is clearly given by the flow of measures defined as $\pi_t^{\theta,\mathbf{X}}=\bm{\pi}^{\theta,v}$. This proves that $\bm{\pi}^{\theta,v}$ is an invariant measure.
	
	Now assume that for $\mathbf{X} \in L^2(\Omega;\R_{+,0}^2)$ there exists a distribution $\overline{\bm{\pi}}^{v,\theta}$ such that $\pi_t^{\theta,\mathbf{X}}\overset{\cP}{\to} \overline{\bm{\pi}}^{v,\theta}$. Then, arguing as we did for $\bm{\pi}^{\theta,v}$, we know that  
	\begin{equation}\label{eq:0term2}
		\int_{\R_{+,0}^2}\mathcal{G}\varphi(\bm{\theta})d\overline{\bm{\pi}}^{v,\theta}(\bm{\theta})=0, \ \forall \varphi \in C_c^\infty(\R_{+,0})
	\end{equation}
	and then the constant flow $\overline{\bm{\pi}}^{v,\theta}$ is the unique solution of \eqref{eq:Kolmmeasure} with initial data $\overline{\bm{\pi}}^{v,\theta}$. Now let $\mathbf{X}$ with ${\rm Law}(\mathbf{X})=\overline{\bm{\pi}}^{v,\theta}$ and recall that $\pi_t^{\theta,\mathbf{X}}=\overline{\bm{\pi}}^{v,\theta}$ for any $t \ge 0$. The second marginal of $\overline{\bm{\pi}}^{v,\theta}$ is invariant and then ${\rm Law}(X^{(2)})=\mathbf{m}_v$. Moreover, since $\overline{\bm{\pi}}^{v,\theta} \in \cP_p(\R_{+,0}^2)$ for $p>9$, then $X^{(1)} \in L^p(\Omega;\R_{+,0})$ for $p>9$. Hence, by Theorem \ref{thm:weakergo1theta} $\overline{\pi}^{v,\theta}=\pi_t^{\theta,\mathbf{X}} \overset{\cP_1}{\to} \bm{\pi}^{v,\theta}$ and thus $\overline{\pi}^{v,\theta}=\bm{\pi}^{v,\theta}$. The latter property also proves that $\bm{\pi}^{v,\theta}$ is the unique invariant measure whose first marginal admits finite moment of order $p>9$.
\end{proof}
To prove the $p$-weak ergodicity of the three-factor process $\{\bm{R}_t\}_{t \ge 0}$, Theorem \ref{prop:uniqueness} is actually sufficient. However, for completeness, let us prove the $p$-weak ergodicity of the two-factor process in the general case.

\subsection{Step (IV): $p$-weak ergodicity of the two-factor process}\label{subs:S4}
Now we are ready to prove that, under some suitable assumptions on $\bm{X}$, $\{\bm{\theta}_t^{\bm{X}}\}_{t \ge 0}$ is $p$-weakly ergodic.

\begin{lem}\label{thm:pweakergotheta}
	Let $p>9$, $\overline{p}>2p-1$, $p'>2\overline{p}-1$ and $\widetilde{p}=\max\left\{2\overline{p}-1,\frac{p'}{p'-2\overline{p}+1}\right\}$. Consider two $\cF_0$-measurable random variables $X^{(1)}\in L^{p'}(\Omega;\R_{+,0})$ and $X^{(2)} \in L^{\widetilde{p}}(\Omega;\R_{+,0})$. Then the process $\{\bm{\theta}_t^{\mathbf{X}}\}_{t \ge 0}$ is $p$-weakly ergodic with limit distribution $\bm{\pi}^{v,\theta}$.
\end{lem}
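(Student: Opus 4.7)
The plan is to combine the global moment bounds from Step (I) (Theorem~\ref{lem:Fpthetaglob}) with the uniqueness statement of Proposition~\ref{prop:uniqueness}: the bounds will yield relative compactness of the family of time marginals $\pi_t^{\theta,\bm{X}}:={\rm Law}(\bm{\theta}_t^{\bm{X}})$ in $(\cP_p(\R_{+,0}^2),\cW_p)$, and Proposition~\ref{prop:uniqueness} will allow us to identify every $\cP$-limit point with $\bm{\pi}^{v,\theta}$.

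First I would observe that, since the integrability hypotheses on $\bm{X}$ involve only strict inequalities, one can select $\varepsilon>0$ small enough so that $\bm{X}$ still satisfies the assumptions of Theorem~\ref{lem:Fpthetaglob} with $p+\varepsilon$ in place of $p$. Applying this together with Corollary~\ref{cor:upboundFpv} then yields
\[ \sup_{t \ge 0} F_{p+\varepsilon}^{\theta,\bm{X}}(t)<\infty \quad\text{and}\quad \sup_{t \ge 0} F_{p+\varepsilon}^{v,X^{(2)}}(t)<\infty, \]
so that $\{\pi_t^{\theta,\bm{X}}\}_{t \ge 0}$ has uniformly bounded $(p+\varepsilon)$-th moment. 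By Remark~\ref{rmk:boundcomp} the family is relatively compact in $(\cP_p(\R_{+,0}^2),\cW_p)$ and has uniformly integrable $p$-moments.

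By Theorem~\ref{thm:equivalentWass}, the $p$-weak ergodicity reduces to weak $\cP$-convergence of $\pi_t^{\theta,\bm{X}}$ towards $\bm{\pi}^{v,\theta}$, and by relative compactness this reduces in turn to showing that any $\cP$-subsequential limit $\overline{\bm{\pi}}$, obtained along some $t_n\to\infty$, coincides with $\bm{\pi}^{v,\theta}$. Any such $\overline{\bm{\pi}}$ automatically belongs to $\cP_p(\R_{+,0}^2)$ with $p>9$; moreover, since the second marginal of $\pi_{t_n}^{\theta,\bm{X}}$ is ${\rm Law}(v_{t_n}^{X^{(2)}})$, the $1$-weak ergodicity of $v_t$ (Theorem~\ref{thm:weakergovp}) forces the second marginal of $\overline{\bm{\pi}}$ to equal $\bm{m}_v$. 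Selecting $\overline{\bm{X}}=(\overline{X}^{(1)},\overline{X}^{(2)})$ with ${\rm Law}(\overline{\bm{X}})=\overline{\bm{\pi}}$ via Proposition~\ref{prop:repspace}, we have $\overline{X}^{(2)}\sim \bm{m}_v$ and $\overline{X}^{(1)}\in L^p(\Omega;\R_{+,0})$ with $p>9$, so Theorem~\ref{thm:weakergo1theta} applies to $\overline{\bm{X}}$ and gives ${\rm Law}(\bm{\theta}_t^{\overline{\bm{X}}})\overset{\cP_p}{\to}\bm{\pi}^{v,\theta}$.

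The hard part will be to upgrade this asymptotic statement about the orbit started at $\overline{\bm{\pi}}$ into the identification $\overline{\bm{\pi}}=\bm{\pi}^{v,\theta}$. My plan is a Krylov--Bogolyubov argument based on the Kolmogorov equation for measures used in the proof of Proposition~\ref{prop:uniqueness}: for every $\varphi\in C_c^\infty(\R_+^2)$ we have the identity $F_\varphi^{\theta,\bm{X}}(T)=F_\varphi^{\theta,\bm{X}}(0)+\int_0^T H_\varphi^{\theta,\bm{X}}(s)\,ds$, and since $F_\varphi^{\theta,\bm{X}}$ is bounded by $\|\varphi\|_\infty$, dividing by $T$ and letting $T\to\infty$ yields $\int \mathcal{G}_{\bm{\theta}}\varphi\,d\nu_T\to 0$, where $\nu_T:=\frac{1}{T}\int_0^T \pi_s^{\theta,\bm{X}}\,ds$ inherit the uniform $(p+\varepsilon)$-moment bound and hence are tight. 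Any $\cP$-cluster point $\bm{\nu}$ of $\{\nu_T\}$ thus lies in $\cP_p$ and satisfies $\int \mathcal{G}_{\bm{\theta}}\varphi\,d\bm{\nu}=0$, so the constant flow equal to $\bm{\nu}$ solves the Kolmogorov equation \eqref{eq:Kolmmeasure} with initial datum $\bm{\nu}$; by the uniqueness argument of Proposition~\ref{prop:uniqueness} this forces $\bm{\nu}=\bm{\pi}^{v,\theta}$. Finally, exploiting the Feller identity $\pi_{t_n+s}^{\theta,\bm{X}}={\rm Law}(\bm{\theta}_s^{\bm{\theta}_{t_n}^{\bm{X}}})\overset{\cP}{\to}{\rm Law}(\bm{\theta}_s^{\overline{\bm{X}}})$ and applying the same Cesaro argument to the orbit $\{{\rm Law}(\bm{\theta}_s^{\overline{\bm{X}}})\}_{s\ge 0}$, together with ${\rm Law}(\bm{\theta}_s^{\overline{\bm{X}}})\overset{\cP_p}{\to}\bm{\pi}^{v,\theta}$ from Theorem~\ref{thm:weakergo1theta}, one concludes $\overline{\bm{\pi}}=\bm{\pi}^{v,\theta}$, and Theorem~\ref{thm:equivalentWass} then upgrades the convergence to the $\cW_p$-topology.
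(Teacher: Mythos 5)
Your first half --- choosing $\varepsilon>0$ so that Theorem~\ref{lem:Fpthetaglob} and Corollary~\ref{cor:upboundFpv} still apply with exponent $p+\varepsilon$, deducing a uniform bound on the $(p+\varepsilon)$-moments of $\pi_t^{\theta,\bm{X}}$, obtaining relative compactness in $(\cP_p(\R_{+,0}^2),\cW_p)$ via Remark~\ref{rmk:boundcomp}, and reducing the claim to the identification of every subsequential limit $\overline{\bm{\pi}}$ with $\bm{\pi}^{v,\theta}$ --- is exactly the paper's argument (the paper extracts the subsequence realizing $\limsup_{n}\mathcal{W}_p(\pi_{n}^{\theta,\bm{X}},\bm{\pi}^{v,\theta})$ and shows this $\limsup$ vanishes).

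The identification step is where your proposal breaks down. The paper disposes of it by invoking Proposition~\ref{prop:uniqueness}: a limit point lying in $\cP_p(\R_{+,0}^2)$ for some $p>9$ must coincide with $\bm{\pi}^{v,\theta}$. You instead assemble three facts --- the second marginal of $\overline{\bm{\pi}}$ is $\bm{m}_v$, the forward orbit started from $\overline{\bm{\pi}}$ converges to $\bm{\pi}^{v,\theta}$ (Theorem~\ref{thm:weakergo1theta}), and the Ces\`aro averages $\nu_T$ converge to $\bm{\pi}^{v,\theta}$ --- and then assert that ``one concludes $\overline{\bm{\pi}}=\bm{\pi}^{v,\theta}$''. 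This does not follow: \emph{any} measure $\mu_0$ with second marginal $\bm{m}_v$ and first marginal in $L^{p'}$ for some $p'>9$ enjoys all three of these properties while being, in general, different from $\bm{\pi}^{v,\theta}$. All of your ingredients push $\overline{\bm{\pi}}$ forward in time, and forward information about the orbit of $\overline{\bm{\pi}}$ cannot constrain $\overline{\bm{\pi}}$ itself. What is actually needed is either (a) that $\overline{\bm{\pi}}$ is \emph{invariant}, i.e.\ $\int_{\R_{+,0}^2}\mathcal{G}_{\bm{\theta}}\varphi\,d\overline{\bm{\pi}}=0$ for all test functions, after which uniqueness of the invariant measure (the first assertion of Proposition~\ref{prop:uniqueness}) concludes --- this is the mechanism the paper relies on --- or (b) a backward contraction argument: represent $\overline{\bm{\pi}}$ as the image under the time-$s$ flow of a cluster point $\bm{\mu}_s$ of $\{\pi_{t_{n_k}-s}^{\theta,\bm{X}}\}_k$, couple $\bm{\mu}_s$ with $\bm{\pi}^{v,\theta}$ so that the second coordinates agree almost surely (both second marginals equal $\bm{m}_v$), and use the contraction estimate \eqref{eq:W1bivariate} to obtain $\cW_1(\overline{\bm{\pi}},\bm{\pi}^{v,\theta})\le Ce^{-k_\theta s}$ with $C$ controlled by the uniform first moments, then let $s\to+\infty$. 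As written, your Krylov--Bogolyubov detour only re-establishes that $\bm{\pi}^{v,\theta}$ is the unique invariant measure with the stated moments --- which is already the content of Proposition~\ref{prop:uniqueness} --- and leaves the subsequential limit unidentified.
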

\begin{proof}
	Consider any sequence $t_n \to +\infty$ and let $\pi_n^{\theta,\mathbf{X}}={\rm Law}(\bm{\theta}_{t_n}^{\mathbf{X}})$. Recall that, by definition $\mathcal{W}_p(\pi_n^{\theta,\mathbf{X}},\bm{\pi}^{v,\theta}) \ge 0$ and let $\{t_{n_k}\}_{k \ge 0}$ be the subsequence such that $\lim_{k \to +\infty}\mathcal{W}_p(\pi_{n_k}^{\theta,\mathbf{X}},\bm{\pi}^{v,\theta})=\limsup_{n \to +\infty}\mathcal{W}_p(\pi_n^{\theta,\mathbf{X}},\bm{\pi}^{v,\theta})$. Set $\mathcal{M}=\{\pi_{n_k}^{\theta,\mathbf{X}}\}_{k \ge 0}$.

	We want first to prove that $\mathcal{M}$ is relatively compact in $\cP_p(\R_{0,+}^2)$. First, observe that there exists $\varepsilon>0$ such that $\overline{p}>2(p+\varepsilon)-1$. Hence, we are under the hypotheses of Corollary \ref{cor:upboundFpv} and Lemma \ref{lem:Fptheta} and we have
	\begin{multline}\label{eq:controlmoment}
		M_{p+\varepsilon}(\pi_{n_k}^{\theta,\mathbf{X}}) \le 2^{p+\varepsilon-1}(F_{p+\varepsilon}^{\theta,\bm{X}}(t_{n_k})+F_{p+\varepsilon}^{v,X^{(2)}}(t_{n_k})) \\ \le 2^{p+\varepsilon-1}(\widetilde{C}_{p+\varepsilon}^{\theta}(\E[|X^{(1)}|^{p+\varepsilon}],\E[|X^{(2)}|^{p+\varepsilon}])+\widetilde{C}^v_{p+\varepsilon}(\E[|X^{(2)}|^{p+\varepsilon}])).
	\end{multline}
	This inequality already shows, thanks to Remark \ref{rmk:boundcomp}, that $\mathcal{M}$ is relatively compact in $\cP_p(\R_{0,+}^2)$, hence there exists a subsequence, that we still denote by $\pi_{n_k}^{\theta,\mathbf{X}}$, such that 
	\begin{equation*}
		\pi_{n_k}^{\theta,\mathbf{X}} \overset{\cP_p}{\to} \overline{\bm{\pi}}^{v,\theta}.
	\end{equation*}
	Since $\overline{\bm{\pi}}^{v,\theta} \in \cP_{p}(\R_{0,+}^2)$ for some $p>9$, by Proposition \ref{prop:uniqueness} we know that $\overline{\bm{\pi}}^{v,\theta}=\bm{\pi}^{v,\theta}$. Hence \begin{equation*}
		0 \le \liminf_{n \to +\infty}\mathcal{W}_p(\pi_n^{\theta,\mathbf{X}},\bm{\pi}^{v,\theta}) \le \limsup_{n \to +\infty}\mathcal{W}_p(\pi_n^{\theta,\mathbf{X}},\bm{\pi}^{v,\theta})=\lim_{k \to +\infty}\mathcal{W}_p(\pi_{n_k}^{\theta,\mathbf{X}},\bm{\pi}^{v,\theta})=0.
	\end{equation*}
	and, since $t_n$ is arbitrary, we end the proof.

\end{proof}

Let us stress that the assumptions of Lemma~\ref{thm:pweakergotheta} are clearly satisfied by deterministic initial data. We can use this observation to prove the convergence in distribution of $\{\bm{\theta}_t^{\bm{X}}\}$ towards $\bm{\pi}^{v,\theta}$ for more general initial data.
\begin{thm}\label{thm:distrconv}
	For any $\cF_0$-measurable initial data $\mathbf{X} \in L^2(\Omega;\R_{+,0}^2)$  we have ${\rm Law}(\bm{\theta}_t^{\mathbf{X}}) \overset{\cP}{\to} \bm{\pi}^{v,\theta}$. 
\end{thm}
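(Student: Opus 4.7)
The plan is to reduce the general case to deterministic initial data, for which Lemma~\ref{thm:pweakergotheta} already yields the desired convergence. The reduction is a standard Feller-semigroup argument followed by dominated convergence.

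First, I would fix an arbitrary $\varphi \in C_b(\R_{+,0}^2)$ and introduce the function $u_t:\R_{+,0}^2\to\R$ defined by $u_t(\bm{x}):=\E[\varphi(\bm{\theta}_t^{\bm{x}})]$. By Proposition~\ref{prop:RFeller}, $\{\bm{\theta}_t\}_{t\ge 0}$ is Feller, so $u_t$ is continuous and bounded, with $\|u_t\|_\infty \le \|\varphi\|_\infty$ uniformly in $t$. For any deterministic $\bm{x}\in\R_{+,0}^2$, the degenerate initial datum $\bm{X}=\bm{x}$ belongs to $L^q(\Omega;\R_{+,0}^2)$ for every $q\ge 1$, so it satisfies the hypotheses of Lemma~\ref{thm:pweakergotheta} (for instance with $p=10$, and any admissible choice of $\overline{p},p',\widetilde{p}$). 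Hence $\pi_t^{\theta,\bm{x}}\overset{\cP_p}{\to}\bm{\pi}^{v,\theta}$ in particular gives weak convergence, i.e.
\begin{equation*}
\lim_{t\to+\infty}u_t(\bm{x})=\int_{\R_{+,0}^2}\varphi(\bm{\theta})\,d\bm{\pi}^{v,\theta}(\bm{\theta})=:\overline{\varphi}, \qquad \forall\,\bm{x}\in\R_{+,0}^2.
\end{equation*}

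Next, I would use the fact that on the representation space of Proposition~\ref{prop:repspace} the driving Brownian motion $(W^{(2)},W^{(3)})$ is independent of $\cF_0$ (and hence of $\bm{X}$), together with pathwise uniqueness of \eqref{eq:SDEaux2a}, to establish the disintegration identity
\begin{equation*}
\E[\varphi(\bm{\theta}_t^{\bm{X}})]=\int_{\R_{+,0}^2}u_t(\bm{x})\,d\,{\rm Law}(\bm{X})(\bm{x})=\E[u_t(\bm{X})].
\end{equation*}
Since $u_t(\bm{X})\to\overline{\varphi}$ almost surely (as the limit holds for every $\bm{x}\in\R_{+,0}^2$) and $|u_t(\bm{X})|\le\|\varphi\|_\infty$, the dominated convergence theorem yields $\lim_{t\to+\infty}\E[\varphi(\bm{\theta}_t^{\bm{X}})]=\overline{\varphi}$. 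As $\varphi\in C_b(\R_{+,0}^2)$ was arbitrary, this is exactly ${\rm Law}(\bm{\theta}_t^{\bm{X}})\overset{\cP}{\to}\bm{\pi}^{v,\theta}$.

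The main obstacle, and the only nonroutine point, is justifying the disintegration formula $\E[\varphi(\bm{\theta}_t^{\bm{X}})]=\E[u_t(\bm{X})]$: conditionally on $\bm{X}$, the process $\bm{\theta}_\cdot^{\bm{X}}$ must have the same law as $\bm{\theta}_\cdot^{\bm{x}}$ with $\bm{x}=\bm{X}(\omega)$. This is precisely the Markov property applied at time $t=0$, together with the $\cF_0$-independence of the Brownian increments; equivalently, it is the semigroup identity $P_t\varphi(\bm{x})=u_t(\bm{x})$ extended to random initial data via Fubini's theorem against the joint law of $(\bm{X},W)$. Once this is in place, the rest of the proof is immediate.
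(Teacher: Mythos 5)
Your proof is correct and follows essentially the same route as the paper: establish pointwise convergence $u_t(\bm{x})\to\int\varphi\,d\bm{\pi}^{v,\theta}$ for deterministic initial data via Lemma~\ref{thm:pweakergotheta}, pass to random $\cF_0$-measurable initial data through the disintegration $\E[\varphi(\bm{\theta}_t^{\bm{X}})]=\E[u_t(\bm{X})]$ (justified by the independence of $\bm{X}$ from the driving Brownian motion), and conclude by dominated convergence. The paper's proof is the same argument, stated slightly more tersely; your added care in justifying the disintegration identity is a welcome clarification rather than a deviation.
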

\begin{proof}
	Consider any $\cF_0$-measurable random variable $\mathbf{X} \in L^2(\Omega;\R_{+,0}^2)$ and a bounded continuous function $\varphi:\R_{+,0}^2 \to \R$. 
	Let also $\mathbf{X}_\infty$ be such that ${\rm Law}(\mathbf{X}_\infty)=\bm{\pi}^{\theta,v}$. 
	For any $\mathbf{x} \in \R_{+,0}^2$ consider $\pi_t^{\theta,\mathbf{x}}={\rm Law}(\bm{\theta}_t^{\mathbf{x}})$ and define $F_\varphi^{\bm{\theta}}(t;\mathbf{x})=\E[\varphi(\bm{\theta}_t^{\bm{x}})]$. By Lemma \ref{thm:pweakergotheta} we know that 
	\begin{equation*}
		\lim_{t \to +\infty}F_\varphi^{\bm{\theta}}(t;\mathbf{x})=\E[\varphi(\bm{X}_\infty)].
	\end{equation*}
	Recalling that since $\bm{X}$ is $\cF_0$-measurable it is independent of the Brownian motion $\{(W^{(2)}_t,W^{(3)}_t)\}_{t \ge 0}$, we have
	\begin{equation*}
		\lim_{t \to +\infty}\E\left[\varphi(\bm{\theta}_t^{\mathbf{X}})\right]=\lim_{t \to +\infty}\E\left[\mathfrak{F}_\varphi^{\theta}(t;\mathbf{X})\right]=\E[\varphi(\bm{X}_\infty)],
	\end{equation*}
	where we could take the limit inside the expectation by a simple application of the dominated convergence theorem. Since $\varphi \in C_{b}(\R_{+,0}^2)$ is arbitrary, we get the statement.
\end{proof}
\begin{remark}
	As a direct consequence of Theorem \ref{thm:distrconv} we get that $\bm{\pi}^{v,\theta}$ is the unique solution of
	\begin{equation*}
		\int_0^t \mathcal{G}^{\theta}\varphi(\bm{\theta})d\bm{\pi}^{\theta,v}ds=0, \ \forall \varphi \in C^\infty_c(\R_{+,0}^2)
	\end{equation*}
	in $\cP_2(\R_{+,0}^2)$.
\end{remark}
We are finally ready to prove the $p$-weak ergodicity of $\{\bm{\theta}_t^{\bm{X}}\}_{t \ge 0}$ for a larger class of initial data $\bm{X}$.
\begin{thm}
	Let $p \ge 2$, $\overline{p}>2p-1$, $p^\prime>2\overline{p}-1$ and $\widetilde{p}=\max\left\{2\overline{p}-1,\frac{p^\prime}{p^\prime-2\overline{p}+1}\right\}$. Consider two $\cF_0$-measurable random variables $X^{(1)} \in L^{p^\prime}(\Omega;\R_{+,0})$ and $X^{(2)} \in L^{\widetilde{p}}(\Omega;\R_{+,0})$. Then $\{\bm{\theta}_t^{\bm{X}}\}_{t \ge 0}$ is $p$-weakly ergodic with limit distribution $\bm{\pi}^{v,\theta}$.
\end{thm}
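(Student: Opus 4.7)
The plan is to combine two facts already in hand: the weak convergence of $\mathrm{Law}(\bm{\theta}_t^{\bm{X}})$ towards $\bm{\pi}^{v,\theta}$ in $\cP(\R_{+,0}^2)$, established for any $\cF_0$-measurable $\bm{X}\in L^2(\Omega;\R_{+,0}^2)$ in Theorem~\ref{thm:distrconv}, and the uniform bound on the $p$-th order moments of $\pi_t^{\theta,\bm{X}}$ supplied by Theorem~\ref{lem:Fpthetaglob} and Corollary~\ref{cor:upboundFpv}. By Theorem~\ref{thm:equivalentWass} and Remark~\ref{rmk:suffcond}, promoting weak convergence to convergence in the $p$-Wasserstein distance only requires uniform integrability of the $p$-moments, which itself is implied by a uniform bound on moments of some strictly larger order $p+\varepsilon$.

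Concretely, I will first observe that the strict inequality $\overline{p}>2p-1$ allows one to choose $\varepsilon>0$ small enough that $\overline{p}>2(p+\varepsilon)-1$ still holds; since $p'>2\overline{p}-1$ and $\widetilde{p}\ge\max\{2\overline{p}-1,p'/(p'-2\overline{p}+1)\}$ are unaffected, the hypotheses of Theorem~\ref{lem:Fpthetaglob} remain satisfied with $p+\varepsilon$ in place of $p$. Moreover $\widetilde{p}\ge 2\overline{p}-1>4p-3\ge 2(p+\varepsilon)-1$, so Corollary~\ref{cor:upboundFpv} applies to $\{v_t^{X^{(2)}}\}_{t\ge 0}$ with exponent $p+\varepsilon$. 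Combining the two, a trivial convexity bound gives
\begin{equation*}
M_{p+\varepsilon}(\pi_t^{\theta,\bm{X}})\le 2^{p+\varepsilon-1}\bigl(F_{p+\varepsilon}^{\theta,\bm{X}}(t)+F_{p+\varepsilon}^{v,X^{(2)}}(t)\bigr)\le C,
\end{equation*}
where $C$ depends only on the parameters of the model and on $\E[|X^{(1)}|^{p+\varepsilon}]$, $\E[|X^{(2)}|^{p+\varepsilon}]$, all of which are finite by the integrability assumptions on $\bm{X}$.

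Finally, since $\pi_t^{\theta,\bm{X}}\overset{\cP}{\to}\bm{\pi}^{v,\theta}$ by Theorem~\ref{thm:distrconv} and $\sup_{t\ge 0}M_{p+\varepsilon}(\pi_t^{\theta,\bm{X}})<\infty$, Remark~\ref{rmk:suffcond} yields uniform integrability of the $p$-moments along the family $\{\pi_t^{\theta,\bm{X}}\}_{t\ge 0}$, whence Theorem~\ref{thm:equivalentWass} delivers $\pi_t^{\theta,\bm{X}}\overset{\cP_p}{\to}\bm{\pi}^{v,\theta}$. This is exactly the $p$-weak ergodicity claimed, with limit distribution $\bm{\pi}^{v,\theta}$ identified in Step (III). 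There is no genuine obstacle here: the real work was done in Steps (I)-(III), and the present statement is essentially a harvesting step that upgrades the weak convergence of Theorem~\ref{thm:distrconv} via the moment control of Theorem~\ref{lem:Fpthetaglob}. The only care needed is to verify that the strict inequality in $\overline{p}>2p-1$ provides the margin required to work at level $p+\varepsilon$ without altering the integrability hypotheses on $X^{(1)}$ and $X^{(2)}$.
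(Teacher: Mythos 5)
Your proposal is correct and follows essentially the same route as the paper's own proof: invoke Theorem~\ref{thm:distrconv} for weak convergence, use the strict inequality $\overline{p}>2p-1$ to apply Theorem~\ref{lem:Fpthetaglob} and Corollary~\ref{cor:upboundFpv} at level $p+\varepsilon$, and then upgrade to $\cP_p$-convergence via Remark~\ref{rmk:suffcond} and Theorem~\ref{thm:equivalentWass}. Nothing is missing.
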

\begin{proof}
	Since $\bm{X} \in L^2(\Omega;\R_{+,0}^2)$, Theorem \ref{thm:distrconv} guarantees that ${\rm Law}(\bm{\theta}_t^{\bm{X}}) \overset{\cP}{\to} \bm{\pi}^{v,\theta}$. Furthermore, if we consider $\varepsilon>0$ such that $\overline{p}>2(p+\varepsilon)-1$, we can use Theorem \ref{lem:Fpthetaglob} and Corollary \ref{cor:upboundFpv} to state that $\sup_{t \ge 0}M_{p+\varepsilon}({\rm Law}(\bm{\theta}_t^{\bm{X}}))<\infty$. By Remark \ref{rmk:suffcond}, this guarantees that $\{{\rm Law}(\bm{\theta}_t^{\bm{X}})\}_{t \ge 0}$ has uniformly integrable $p$-moments and then, by Theorem \ref{thm:equivalentWass}, ${\rm Law}(\bm{\theta}_t^{\bm{X}}) \overset{\cP_p}{\to} \bm{\pi}^{v,\theta}$.
\end{proof}

\section{The $p$-weak ergodicity of the $CIR^3$ process}\label{sec:finalsec}

We are finally ready to discuss full three-factor model $\{\mathbf{R}_t\}_{t \ge 0}:=\{(R_t,\theta_t,v_t)\}_{t \ge 0}$. To prove.... we will adopt exactly the same strategy as in  Section \ref{subsec:twofactor}. For this reason, some proofs will be only sketched, underlining, at the same time, the parts in which they differ from the analogous ones in Section \ref{subsec:twofactor}. Again, we need some notation. For any $\cF_0$-measurable random variable $\mathbf{X}=(X^{(1)},X^{(2)},X^{(3)}) \in L^2(\Omega;\R_{+,0}^3)$, we let $\{\bm{R}_t^{\mathbf{X}}\}_{t \ge 0}=\{(R^{\mathbf{X}}_t,\theta^{\mathbf{X}}_t,v^{\mathbf{X}}_t)\}_{t \ge 0}$ be the pathwise unique strong solution of \eqref{Our_Model_true} with initial data $\bm{X}$. For any $p \ge 0$, we set $F_p^{R,\mathbf{X}}(t)=\E[|R_t^{\mathbf{X}}|^p]$, $G_{p}^{R,\mathbf{X}}(t)=\E[|R_t^{\mathbf{X}}|^pv_t^{\mathbf{X}}]$, and $J_p^{R,\mathbf{X}}(t)=\E[|R_t^{\mathbf{X}}|^p\theta_t^{\mathbf{X}}]$. We also define $\Delta_t^{R,\mathbf{X},\mathbf{Y}}=R_t^{\mathbf{X}}-R_t^{\mathbf{Y}}$ for $t \ge 0$ and any $\cF_0$-measurable random variables $\mathbf{X},\bm{Y} \in L^2(\Omega;\R_{+,0}^3)$. 

First, arguing as in Subsection \ref{subs:S1}, we need to prove that under some suitable assumptions on the initial data it holds $\sup_{t \ge 0}F_p^{R}(t)<\infty$. First, we prove a local bound, as in Lemma \ref{lem:Fptheta}. 
\begin{lem}\label{lem:FpR}
	Let $p \ge 2$, $p^\prime>2p-1$ and $\widetilde{p}=\max\left\{2p-1,\frac{p^\prime}{p^\prime-2p+1}\right\}$. Consider the three $\cF_0$-measurable random variables $X^{(1)},X^{(2)} \in  L^{p^\prime}(\Omega;\R_{+,0})$ and $X^{(3)} \in  L^{\widetilde{p}}(\Omega;\R_{+,0})$. Then, for any $T>0$,
	\begin{equation*}
		\sup_{t \in [0,T]}F_p^{R,\mathbf{X}}(t)<\infty.
	\end{equation*}	
\end{lem}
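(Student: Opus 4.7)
The plan is to follow almost verbatim the argument used for Lemma~\ref{lem:Fptheta}, with the extra complication that the drift of $R_t$ contains the factor $\theta_t$, so the It\^o expansion produces two mixed terms instead of one. Explicitly, applying It\^o's formula to $R_t^p$ (using positivity of $R_t$ from the results in \cite{Ascione2024Jan}) and freezing the process at the stopping time
\begin{equation*}
T_M^{R,\mathbf{X}}:=\inf\{t\ge 0: R_t^{\mathbf{X}}\vee \theta_t^{\mathbf{X}}\vee v_t^{\mathbf{X}}\ge M\}, \qquad \tau_t^{R,\mathbf{X},M}:=t\wedge T_M^{R,\mathbf{X}},
\end{equation*}
yields
\begin{align*}
\bigl(R^{\mathbf{X}}_{\tau_t^{R,\mathbf{X},M}}\bigr)^p
&=|X^{(1)}|^p+\int_0^{\tau_t^{R,\mathbf{X},M}}\!\Bigl(-kp\,|R_s^{\mathbf{X}}|^p+kp\,\theta_s^{\mathbf{X}}|R_s^{\mathbf{X}}|^{p-1}\\
&\qquad +\tfrac{\alpha^2 p(p-1)}{2}v_s^{\mathbf{X}}|R_s^{\mathbf{X}}|^{p-1}\Bigr)ds+p\alpha\!\int_0^{\tau_t^{R,\mathbf{X},M}}\!\!\sqrt{v_{\tau_s^{R,\mathbf{X},M}}^{\mathbf{X}}}\bigl|R_{\tau_s^{R,\mathbf{X},M}}^{\mathbf{X}}\bigr|^{p-\tfrac12}dW_s^{(1)}.
\end{align*}

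Next I would verify that the stochastic integral is a true martingale, exactly as in \eqref{eq:Mt2theta1}: after stopping, $\E[v^{\mathbf{X}}_{\tau_s^{R,\mathbf{X},M}}|R^{\mathbf{X}}_{\tau_s^{R,\mathbf{X},M}}|^{2p-1}]$ is controlled by $\E[X^{(3)}|X^{(1)}|^{2p-1}]+M^{2p}$, and by H\"older's inequality with conjugate exponents $p'/(2p-1)$ and $p'/(p'-2p+1)$ this is finite since $\widetilde{p}\ge p'/(p'-2p+1)$ and $X^{(1)}\in L^{p'}$. Taking expectation and invoking the optional stopping theorem then gives an identity analogous to \eqref{eq:thetap3}, with the additional term $\E[\int_0^{\tau_t^{R,\mathbf{X},M}}kp\,\theta_s^{\mathbf{X}}|R_s^{\mathbf{X}}|^{p-1}ds]$.

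The key step is then a double application of Young's inequality as in \eqref{eq:Youngp1}, used twice: once for $\theta_s|R_s|^{p-1}$ and once for $v_s|R_s|^{p-1}$. Choosing
\begin{equation*}
\varepsilon=\Bigl(\tfrac{kp}{(p-1)(k+\tfrac{\alpha^2(p-1)}{2})}\Bigr)^{1-\tfrac1p}
\end{equation*}
(or any comparable constant) one absorbs a fraction of $|R_s^{\mathbf{X}}|^p$ into the negative drift term $-kp|R_s^{\mathbf{X}}|^p$, producing
\begin{equation*}
\E\bigl[(R_{\tau_t^{R,\mathbf{X},M}}^{\mathbf{X}})^p\bigr]\le \E[|X^{(1)}|^p]+C_1\!\int_0^t\! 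F_p^{\theta,(X^{(2)},X^{(3)})}(s)\,ds+C_2\!\int_0^t\! F_p^{v,X^{(3)}}(s)\,ds,
\end{equation*}
for suitable constants $C_1,C_2>0$ depending only on the parameters and $p$. Letting $M\to+\infty$ via Fatou's lemma, the right-hand side is finite on $[0,T]$ thanks to Corollary~\ref{cor:upboundFpv} applied to $\{v_s^{X^{(3)}}\}$ (legitimate since $\widetilde{p}\ge 2p-1$) and Lemma~\ref{lem:Fptheta} applied to $\{\bm{\theta}_s^{(X^{(2)},X^{(3)})}\}$ (whose hypotheses match exactly ours, since $X^{(2)}\in L^{p'}$ and $X^{(3)}\in L^{\widetilde{p}}$).

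The main (but only mild) obstacle is checking that the integrability hypotheses flow correctly through the chain of dependencies $v\to\bm{\theta}\to\bm{R}$: the third factor $X^{(3)}$ must be integrable enough to control both $F_p^{v,X^{(3)}}$ via Corollary~\ref{cor:upboundFpv} and the mixed term $v_s\theta_s^{2p-1}$ inside Lemma~\ref{lem:Fptheta}, while $X^{(2)}$ must carry enough moments to control $\E[X^{(2)}|X^{(1)}|^{2p-1}]$ in the martingale check. Both are immediate consequences of the definition of $\widetilde{p}$ and of $p'>2p-1$, exactly as in Lemma~\ref{lem:Fptheta}; no new ideas are required.
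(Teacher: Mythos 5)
Your proposal is correct and follows essentially the same route as the paper: the same stopped It\^o expansion of $(R_t^{\mathbf{X}})^p$ with the stopping time $T_M^{R,\mathbf{X}}$, the same H\"older-based martingale check mirroring \eqref{eq:Mt2theta1}, the same double Young's inequality to absorb the mixed terms $\theta_s|R_s|^{p-1}$ and $v_s|R_s|^{p-1}$ into the dissipative drift, and the same passage $M\to+\infty$ via Fatou combined with Lemma~\ref{lem:Fptheta} and Corollary~\ref{cor:upboundFpv}. The only cosmetic difference is your choice of $\varepsilon$, which leaves a harmless residual multiple of $\int_0^t F_p^{\theta}\,ds$ and $\int_0^t F_p^{v}\,ds$ rather than cancelling the $|R_s|^p$ coefficient exactly as the paper does; this does not affect the conclusion.
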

\begin{proof}
	Let $p \ge 2$, and observe that, by It\^o's formula,
	\begin{align}\label{eq:Rp}
		\begin{split}
			(R_{\tau^{R,\bm{X},M}_t}^{\mathbf{X}})^p&=\left|X^{(1)}\right|^p+\int_0^{\tau^{R,\bm{X},M}_t} \left(-k p \left|R^{\mathbf{X}}_s\right|^p+\left(k p \theta_s^{\mathbf{X}}+\frac{\alpha^2p(p-1)}{2}v_s^{\mathbf{X}}\right)|R_s^{\mathbf{X}}|^{p-1}\right)ds\\
			&\qquad +\alpha p\int_0^{\tau^{R,\bm{X},M}_t} \sqrt{v_{\tau^{R,\bm{X},M}_s}^{\mathbf{X}}}|R_{\tau^{R,\bm{X},M}_s}^{\mathbf{X}}|^{p-\frac{1}{2}}dW_s^{(1)}
		\end{split}
	\end{align}
	where, for any $M,t>0$,
	\begin{equation}\label{eq:TRtheta}
		T_M^{R,\mathbf{X}}:=\inf\{t \ge 0: \ R_t^{\mathbf{X}} \vee \theta_t^{\bm{X}} \vee v_t^{\mathbf{X}}   \ge  M\}, \quad  \tau^{R,\bm{X},M}_t=t \wedge T_M^{R,\bm{X}}.
	\end{equation}
	Arguing as in Lemma \ref{lem:Fptheta} we can show that $\{\sqrt{v_{\tau^{R,\bm{X},M}_s}^{\mathbf{X}}}|R_{\tau^{R,\bm{X},M}_s}^{\mathbf{X}}|^{p-\frac{1}{2}}\}_{s \ge 0} \in M^2_t(\Omega)$ for any $t \ge 0$ and then, taking the expectation, we have
	\begin{align}\label{eq:Rp3}
		\begin{split}
			\E\left[(R_{\tau^{R,\bm{X},M}_t}^{\mathbf{X}})^p\right]&=\E\left[\left|X^{(1)}\right|^p\right]\\
			&+\E\left[\int_0^{\tau^{R,\bm{X},M}_t} \left(-k p \left|\theta^{\mathbf{X}}_s\right|^p+\left(k p \theta_s^{\mathbf{X}}+\frac{\alpha^2p(p-1)}{2}v_s^{\mathbf{X}}\right)|R_s^{\mathbf{X}}|^{p-1}\right)ds\right].
		\end{split}
	\end{align}
	Setting
	\begin{equation*}
		\varepsilon=\left(\frac{2 k p}{\alpha^2(p-1)^2+2k(p-1)}\right)^{1-\frac{1}{p}}
	\end{equation*}
	and using Young's inequality to get
	\begin{equation}\label{eq:YoungR2}
		\theta_s^{\mathbf{X}}\left|R_s^{\mathbf{X}}\right|^{p-1} \le \frac{|\theta_s^{\mathbf{X}}|^p}{p\varepsilon^p}+\frac{(p-1)\varepsilon^{\frac{p}{p-1}}}{p}\left|R_s^{\mathbf{X}}\right|^{p}, \qquad v_s^{\mathbf{X}}\left|R_s^{\mathbf{X}}\right|^{p-1} \le \frac{\left|v_s^{\mathbf{X}}\right|^p}{p\varepsilon^p}+\frac{(p-1)\varepsilon^{\frac{p}{p-1}}}{p}\left|R_s^{\mathbf{X}}\right|^{p},
	\end{equation}
 	we achieve from \eqref{eq:Rp3}
	\begin{align}\label{eq:Rp5}
		\begin{split}
			\E&\left[(R_{t \wedge T_M^{\theta,\mathbf{X}}}^{\mathbf{X}})^p\right]=\E\left[\left|X^{(1)}\right|^p\right]\\
			&+\left(\frac{\alpha^2\beta^2(p-1)^2+2k_\theta(p-1)}{2pk_\theta}\right)^{p-1}\E\left[\int_0^{t \wedge T_M^{\theta,\mathbf{X}}} \left(k \left|\theta_s^{\mathbf{X}}\right|^{p} +\frac{\alpha^2(p-1)}{2}\left|v_s^{\mathbf{X}}\right|^p\right)ds\right].
		\end{split}
	\end{align}
	Taking the limit as $M \to +\infty$, using Fatou's lemma, the monotone convergence theorem, Lemma \ref{lem:Fptheta} and Corollary \ref{cor:upboundFpv}, we prove the statement.
\end{proof}
As a direct consequence, we get the following local bounds on $G_p^{R,\bm{X}}$ and $J_p^{R,\bm{X}}$.
\begin{lem}\label{lem:upGpR}
	Let $p \ge 2$, $p^\prime>2\overline{p}-1$ and $\widetilde{p}=\max\left\{2\overline{p}-1,\frac{p^\prime}{p^\prime-2\overline{p}+1}\right\}$. Consider three $\cF_0$-measurable random variables $X^{(1)},X^{(2)} \in L^{p^\prime}(\Omega;\R_{+,0})$ and $X^{(3)} \in  L^{\widetilde{p}}(\Omega;\R_{+,0})$. Then, for any $T>0$
	\begin{equation*}
		\sup_{t \in [0,T]}G_p^{R,\mathbf{X}}(t)<\infty, \qquad \sup_{t \in [0,T]}J_{p-1}^{R,\mathbf{X}}(t)<\infty.
	\end{equation*}	
\end{lem}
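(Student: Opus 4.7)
The plan is to copy the blueprint of Lemma~\ref{lem:upGptheta} essentially verbatim, splitting the mixed moments $\E[|R_t^{\bm X}|^p v_t^{\bm X}]$ and $\E[|R_t^{\bm X}|^{p-1}\theta_t^{\bm X}]$ into pure moments via Young's inequality, and then invoking the uniform/local bounds that have already been proved for the pure moments of $R$, $\theta$ and $v$. There is no new stochastic calculus needed.

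First I would observe that, in the implicit spirit of Lemma~\ref{lem:upGptheta}, the hypothesis of the lemma should include $\overline{p}>p$, and one may without loss of generality reduce $p'$ so that $p' \le \tfrac{(\overline{p}+p)(2\overline{p}-1)}{2p}$; this is equivalent to $\tfrac{\overline{p}+p}{\overline{p}-p}\le \tfrac{p'}{p'-2\overline{p}+1}\le \widetilde{p}$, which will be needed below. Shrinking $p'$ only strengthens the hypothesis on $\widetilde{p}$ because the latter is monotone in $p'$. For the bound on $G_p^{R,\bm X}$, I apply Young's inequality with conjugate exponents $\overline{p}/p$ and $\overline{p}/(\overline{p}-p)$ to get
\begin{equation*}
|R_t^{\bm X}|^p v_t^{\bm X} \le \frac{p}{\overline{p}}|R_t^{\bm X}|^{\overline{p}} + \frac{\overline{p}-p}{\overline{p}}|v_t^{\bm X}|^{\overline{p}/(\overline{p}-p)}.
\end{equation*}
Taking expectations, the first term is dominated by $F_{\overline{p}}^{R,\bm X}(t)$, which is finite on $[0,T]$ by Lemma~\ref{lem:FpR} applied with exponent $\overline{p}$ (since $p' > 2\overline{p}-1$ and the current $\widetilde{p}$ is admissible for $\overline{p}$). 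The second term is $F_{\overline{p}/(\overline{p}-p)}^{v,X^{(3)}}(t)$, which is uniformly bounded on $[0,\infty)$ by Corollary~\ref{cor:upboundFpv}, provided $X^{(3)}\in L^{2\overline{p}/(\overline{p}-p)-1} = L^{(\overline{p}+p)/(\overline{p}-p)}$; this inclusion is exactly what the WLOG reduction above guarantees.

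For the bound on $J_{p-1}^{R,\bm X}$, I apply Young with conjugate exponents $p/(p-1)$ and $p$ to obtain
\begin{equation*}
|R_t^{\bm X}|^{p-1}\theta_t^{\bm X} \le \frac{p-1}{p}|R_t^{\bm X}|^{p} + \frac{1}{p}|\theta_t^{\bm X}|^{p}.
\end{equation*}
Taking expectations, $F_p^{R,\bm X}(t)$ is finite on $[0,T]$ directly by Lemma~\ref{lem:FpR}, while $F_p^{\theta,(X^{(2)},X^{(3)})}(t)$ is in fact \emph{uniformly} bounded on $[0,\infty)$ by Theorem~\ref{lem:Fpthetaglob} with the same $\overline{p}$; indeed the hypotheses $p'>2\overline{p}-1$ and $\widetilde{p}\ge \max\{2\overline{p}-1,p'/(p'-2\overline{p}+1)\}$ are exactly what Theorem~\ref{lem:Fpthetaglob} requires (after the mild re-labelling $X^{(1)}\leftrightarrow X^{(2)}$ in its statement, and using $\overline{p}>p$). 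Taking suprema over $t\in[0,T]$ then yields both claimed finiteness statements.

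The main obstacle, such as it is, is purely bookkeeping: one must verify that the integrability exponents $(p',\widetilde{p})$ in the hypothesis of the current lemma are strong enough to feed both Lemma~\ref{lem:FpR} (applied with $\overline{p}$, not $p$) and the corresponding bound on either $v$ or $\theta$. The WLOG reduction on $p'$ is what makes the exponent $(\overline{p}+p)/(\overline{p}-p)$ fit under $\widetilde{p}$; beyond that, the proof is a one-line Young inequality followed by the invocation of the three auxiliary results already established in Subsection~\ref{subs:S1}.
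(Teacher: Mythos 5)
Your proof is correct and follows essentially the same route as the paper's: the same two Young decompositions of $G_p^{R,\bm{X}}$ and $J_{p-1}^{R,\bm{X}}$, the same without-loss-of-generality reduction of $p'$ to make the exponent $\frac{\overline{p}}{\overline{p}-p}$ admissible for Corollary~\ref{cor:upboundFpv}, and the same invocation of Lemma~\ref{lem:FpR}. The only cosmetic difference is that for the $\theta$-moment the paper cites the local bound of Lemma~\ref{lem:Fptheta} (whose hypotheses hold here since $\overline{p}>p$), whereas you cite the global Theorem~\ref{lem:Fpthetaglob}, which would additionally require $\overline{p}>2p-1$ — but only the local bound is needed, so this does not affect the argument.
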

\begin{proof}
	By Young's inequality we have
	\begin{equation*}
		G_p^{R,\bm{X}}(t) \le \frac{p}{\overline{p}}F_{\overline{p}}^{R,\bm{X}}(t)+\frac{\overline{p}-p}{\overline{p}}F^{v,X^{(3)}}_{\frac{\overline{p}}{\overline{p}-p}}(t) \ \mbox{ and } \ 
		J_{p-1}^{R,\mathbf{X}}(t) \le \frac{1}{p}F_p^{\theta,(X^{(2)},X^{(3)})}(t)+\frac{p-1}{p}F_p^{R,\mathbf{X}}(t).
	\end{equation*}
	Hence, the statement follows by Lemmas \ref{lem:FpR} and \ref{lem:Fptheta}, together with Corollary \ref{cor:upboundFpv}.	
\end{proof}
It remains to prove a global bound on $F_p^{R,\bm{X}}$ as in Theorem \ref{lem:Fpthetaglob}.
\begin{thm}\label{lem:FpRglob}
	Let $p \ge 2$, $\overline{p}>2p-1$, $p^\prime>2\overline{p}-1$ and $\widetilde{p}=\max\left\{2\overline{p}-1,\frac{p^\prime}{p^\prime-2\overline{p}+1}\right\}$. Consider three $\cF_0$-measurable random variables $X^{(1)},X^{(2)} \in L^{p^\prime}(\Omega;\R_{+,0})$ and $X^{(3)} \in L^{\widetilde{p}}(\Omega;\R_{+,0})$ . Then $F_p^{R,\mathbf{X}}$ and there exists a function $\widetilde{C}_p^{R}:\R_{0,+}^3 \to \R_+$ that is non-decreasing in all its variables and
	\begin{equation}\label{eq:ineqglobalR}
		\sup_{t \ge 0}F_p^{R,\mathbf{X}}(t) \le \widetilde{C}_p^{R}(\E[|X^{(1)}|^p],\E[|X^{(2)}|^p],\E[|X^{(3)}|^p]).
	\end{equation}
\end{thm}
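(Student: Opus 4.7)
The plan is to mirror precisely the strategy used for Theorem \ref{lem:Fpthetaglob} in Step (I), treating $R_t^{\bm{X}}$ as a CIR-like factor whose drift $k(\theta_t^{\bm{X}}-R_t^{\bm{X}})$ has a non-constant forcing term $\theta_t^{\bm{X}}$ and whose squared diffusion coefficient is $\alpha^2 v_t^{\bm{X}} R_t^{\bm{X}}$. First I would observe that, under the hypotheses, the pair $(X^{(2)},X^{(3)})$ satisfies the assumptions of Theorem \ref{lem:Fpthetaglob} and Corollary \ref{cor:upboundFpv} with the same parameters $p,\overline{p},p',\widetilde{p}$, so that
\begin{equation*}
\sup_{t \ge 0}F_p^{\theta,(X^{(2)},X^{(3)})}(t) \le \widetilde{C}_p^{\theta}(\E[|X^{(2)}|^p],\E[|X^{(3)}|^p]) \quad \text{and} \quad \sup_{t \ge 0}F_p^{v,X^{(3)}}(t) \le \widetilde{C}_p^{v}(\E[|X^{(3)}|^p]).
\end{equation*}
These two global bounds will play here the role that $\widetilde{C}_p^{v}$ played in the proof of Theorem \ref{lem:Fpthetaglob}.

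Second, I would apply It\^{o}'s formula to $(R_t^{\bm{X}})^p$ up to the localizing stopping time $\tau_t^{R,\bm{X},M}$ of \eqref{eq:TRtheta}, exactly as in \eqref{eq:Rp}. To justify passing to expectations and removing the stochastic integral, we need $\{\sqrt{v_{\tau_s^{R,\bm{X},M}}^{\bm{X}}}|R_{\tau_s^{R,\bm{X},M}}^{\bm{X}}|^{p-1/2}\}_{s \ge 0}\in M_t^2(\Omega)$, which is equivalent to a local bound on $G_{2p-1}^{R,\bm{X}}$. Since our assumption $\overline{p}>2p-1$ together with $p'>2\overline{p}-1$ and the choice of $\widetilde{p}$ is exactly what Lemma~\ref{lem:upGpR} requires once its parameter $p$ is replaced by $2p-1$, the lemma delivers $\sup_{t \in [0,T]}G_{2p-1}^{R,\bm{X}}(t)<\infty$ for every $T>0$. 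The dominated and monotone convergence theorems then let us send $M \to +\infty$, and Fubini's theorem yields the integral identity
\begin{equation*}
F_p^{R,\bm{X}}(t)=F_p^{R,\bm{X}}(t_0)+\int_{t_0}^{t}\left(kp\, J_{p-1}^{R,\bm{X}}(s)+\tfrac{\alpha^2 p(p-1)}{2}G_{p-1}^{R,\bm{X}}(s)-kp\, F_p^{R,\bm{X}}(s)\right)ds
\end{equation*}
for all $0 \le t_0 \le t$, where the local integrability of $J_{p-1}^{R,\bm{X}}$ and $G_{p-1}^{R,\bm{X}}$ comes from Lemma~\ref{lem:upGpR} applied at exponent $p$. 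This identity already implies that $F_p^{R,\bm{X}}$ is absolutely continuous, hence continuous.

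The third step, which is the only place where anything new must happen, is the Young inequality trick \eqref{eq:YoungR2}: with
\begin{equation*}
\varepsilon=\left(\frac{kp}{\alpha^2(p-1)^2+2k(p-1)}\right)^{(p-1)/p},
\end{equation*}
the two Young bounds absorb a fraction $kp/2$ of the $(R_s^{\bm{X}})^p$ mass into the dissipative $-kpF_p^{R,\bm{X}}$ term while leaving behind constant multiples of $|\theta_s^{\bm{X}}|^p$ and $|v_s^{\bm{X}}|^p$. Combined with the two global bounds recalled at the start, this gives
\begin{equation*}
F_p^{R,\bm{X}}(t) \le F_p^{R,\bm{X}}(t_0)+\int_{t_0}^{t}\!\left(\tfrac{kp}{2}C_p^{R}(\E[|X^{(2)}|^p],\E[|X^{(3)}|^p])-\tfrac{kp}{2}F_p^{R,\bm{X}}(s)\right)ds,
\end{equation*}
with $C_p^{R}$ an explicit non-decreasing function of $\widetilde{C}_p^{\theta}$ and $\widetilde{C}_p^{v}$. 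Lemma~\ref{lem:contbound} then concludes with
\begin{equation*}
\widetilde{C}_p^{R}(x_1,x_2,x_3)=\max\left\{x_1,\, C_p^{R}(x_2,x_3)\right\},
\end{equation*}
which is manifestly non-decreasing in each argument.

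The main obstacle, as in Step (I) for $\bm{\theta}$, is purely bookkeeping: checking that the integrability exponents $p'$ and $\widetilde{p}$ chosen in the statement are simultaneously strong enough to (i) guarantee the local martingale-space membership that justifies taking expectations in the It\^{o} formula at exponent $p$, and (ii) provide the uniform bounds on the $\theta$ and $v$ factors that feed into the right-hand side of the Young inequality. The condition $\overline{p}>2p-1$ is what makes both things work with a single choice of parameters, since it is strong enough both to trigger Lemma~\ref{lem:upGpR} at the shifted exponent $2p-1$ and to trigger Theorem~\ref{lem:Fpthetaglob} at exponent $p$.
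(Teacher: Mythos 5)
Your proposal is correct and follows essentially the same route as the paper: the paper's proof is exactly a reduction to the argument of Theorem \ref{lem:Fpthetaglob}, invoking the local bounds of Lemmas \ref{lem:FpR} and \ref{lem:upGpR} (the latter at the shifted exponent $2p-1$) to justify the It\^{o}/expectation step, then applying Young's inequality with the same $\varepsilon=\bigl(\tfrac{kp}{\alpha^2(p-1)^2+2k(p-1)}\bigr)^{1-1/p}$ and the global bounds from Theorem \ref{lem:Fpthetaglob} and Corollary \ref{cor:upboundFpv} before concluding with Lemma \ref{lem:contbound}. You have merely written out in full the details the paper leaves implicit.
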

\begin{proof}
	Continuity of $F_p^{R,\bm{X}}$ is shown arguing exactly in the same way as in Theorem \ref{lem:Fpthetaglob}, using the local bounds obtained in Lemmas \ref{lem:FpR} and \ref{lem:upGpR}. Furthermore, the same arguments lead to \eqref{eq:ineqglobalR}, employing \eqref{eq:YoungR2} with $\varepsilon=\left(\frac{k p}{\alpha^2(p-1)^2+2k (p-1)}\right)^{1-\frac{1}{p}}$ and the global bounds provided in Theorem \ref{lem:Fpthetaglob} and Corollary \ref{cor:upboundFpv}.

\end{proof}
Next, to proceed as in Subsection \ref{subs:S2}, we need to provide an estimate on the expected value of $|\Delta_t^{R,\bm{X},\bm{Y}}|$ provided $(X^{(2)},X^{(3)})=(Y^{(2)},Y^{(3)})$. The proof is the same as the one of Lemma \ref{lem:boundcondexptheta} and thus is omitted.
\begin{lem}\label{lem:boundcondexpR}
	Let $p>3$ and $\widetilde{p}=\max\left\{\frac{p^\prime}{p^\prime-3},3\right\}$. Consider four $\cF_0$-measurable random variables $X^{(1)},Y^{(1)},X^{(2)} \in L^{p}(\Omega; \R_+)$ and $X^{(3)}\in L^{\widetilde{p}}(\Omega; \R_+)$. Define $\mathbf{X}=(X^{(1)},X^{(2)},X^{(3)})$ and $\mathbf{Y}=(Y^{(1)},X^{(2)},X^{(3)})$. For any $t \ge 0$ we have
	\begin{equation*}
		\E\left[|\Delta_t^{R, \mathbf{X},\mathbf{Y}}|\right] \le \E\left[\left|X^{(1)}-Y^{(1)}\right|\right]|e^{-k t}.
	\end{equation*}
\end{lem}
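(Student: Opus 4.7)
The plan is to mirror the argument of Lemma \ref{lem:boundcondexptheta}, with $R$ playing the role of $\theta$ and with the cascade structure of \eqref{Our_Model_true} doing its usual work. First, since $X^{(2)}=Y^{(2)}$ and $X^{(3)}=Y^{(3)}$, pathwise uniqueness of strong solutions of the two-factor system \eqref{eq:SDEaux2a} yields $\theta_t^{\bm{X}}=\theta_t^{\bm{Y}}$ and $v_t^{\bm{X}}=v_t^{\bm{Y}}$ almost surely for every $t\ge 0$. Subtracting the $R$-equations in \eqref{Our_Model_true} for initial data $\bm{X}$ and $\bm{Y}$, the $\theta$-contribution cancels exactly and the linear $-kR$-drift survives, giving
\begin{equation*}
	\Delta_t^{R,\bm{X},\bm{Y}}=X^{(1)}-Y^{(1)}-k\int_0^t\Delta_s^{R,\bm{X},\bm{Y}}\,ds+\alpha\int_0^t\sqrt{v_s^{\bm{X}}}\left(\sqrt{R_s^{\bm{X}}}-\sqrt{R_s^{\bm{Y}}}\right)dW_s^{(1)}.
\end{equation*}

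Second, I would verify that the stochastic integrand belongs to $M_t^2(\Omega)$ for every $t\ge 0$. Using $(\sqrt{a}-\sqrt{b})^2\le a+b$ and then Young's inequality, the second moment of the integrand at time $s$ is controlled by $\tfrac{1}{2}F_2^{v,X^{(3)}}(s)+F_2^{R,\bm{X}}(s)+F_2^{R,\bm{Y}}(s)$, all of which are uniformly bounded on $[0,t]$ by Corollary \ref{cor:upboundFpv} and Theorem \ref{lem:FpRglob}. The hypotheses $p>3$ and $\widetilde{p}=\max\{p/(p-3),3\}$ are precisely the ones needed to trigger those global bounds for the exponent $2$, in perfect analogy with how the $\theta$-case fed on Lemma \ref{lem:Fptheta} and Corollary \ref{cor:upboundFpv}. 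Once the martingale property is in hand, taking conditional expectation given $(X^{(1)},Y^{(1)})$ kills the stochastic integral, and Fubini (justified by the same moment bounds) yields the linear Volterra equation
\begin{equation*}
	\E\bigl[\Delta_t^{R,\bm{X},\bm{Y}}\mid X^{(1)},Y^{(1)}\bigr]=X^{(1)}-Y^{(1)}-k\int_0^t\E\bigl[\Delta_s^{R,\bm{X},\bm{Y}}\mid X^{(1)},Y^{(1)}\bigr]\,ds,
\end{equation*}
whose unique solution is $(X^{(1)}-Y^{(1)})e^{-kt}$.

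Third, to promote the identity for $\E[\Delta]$ to the inequality for $\E[|\Delta|]$, I introduce $\tau_0^R:=\inf\{t\ge 0:\Delta_t^{R,\bm{X},\bm{Y}}=0\}$. Because the processes $(\theta,v)$ agree pathwise under $\bm{X}$ and $\bm{Y}$, at the random time $\tau_0^R$ the two $R$-equations become the same SDE driven by the same Brownian motion with identical (zero-difference) initial condition; pathwise uniqueness and the strong Markov property guaranteed by the Feller property in Proposition \ref{prop:RFeller} then force $\Delta_t^{R,\bm{X},\bm{Y}}=0$ for all $t\ge\tau_0^R$. Splitting on the events $E_\pm=\{\pm X^{(1)}>Y^{(1)}\}$ and using that on $E_\pm\cap\{\tau_0^R>t\}$ we have $\pm\Delta_t^{R,\bm{X},\bm{Y}}>0$, exactly as in the proof of Lemma \ref{lem:boundcondexp}, one obtains $\E[|\Delta_t^{R,\bm{X},\bm{Y}}|\mid X^{(1)},Y^{(1)}]\le|X^{(1)}-Y^{(1)}|e^{-kt}$, and taking total expectations yields the claim.

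The only mildly delicate point, as in the proofs of Lemmas \ref{lem:boundcondexp} and \ref{lem:boundcondexptheta}, is the clean justification of the martingale property; the safe route is to truncate with the stopping times $T_M^{R,\bm{X}}$ from \eqref{eq:TRtheta}, perform the computations on $\tau_t^{R,\bm{X},M}$, and pass to the limit $M\to+\infty$ via monotone convergence and Fatou, exploiting the global moment bounds established in Section \ref{sec:finalsec}. Because every ingredient (cascade cancellation of the $\theta$-drift, $L^2$-moments of $\sqrt{v\,R}$, and the absorbing nature of $\tau_0^R$) is already in place, the proof is essentially a transcription of the one of Lemma \ref{lem:boundcondexptheta}, which is why the authors omit it.
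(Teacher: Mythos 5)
Your proposal is correct and is exactly the argument the paper intends: the authors omit the proof precisely because it is the transcription of Lemma~\ref{lem:boundcondexptheta} that you carry out (cancellation of the $\theta$-drift via pathwise uniqueness of the two-factor system, the conditional-expectation ODE, and the absorbing time $\tau_0^R$ with the $E_\pm$ splitting). One small correction: for the $M^2_t(\Omega)$ membership you invoke the global bound of Theorem~\ref{lem:FpRglob}, whose hypotheses for exponent $2$ ($p'>5$) are \emph{not} implied by the lemma's assumption $p>3$; what you actually need, and what the hypotheses $p>3$, $\widetilde p=\max\{3,p/(p-3)\}$ are tailored to, is the local bound of Lemma~\ref{lem:FpR} (together with Corollary~\ref{cor:upboundFpv}), mirroring the use of Lemma~\ref{lem:Fptheta} in the $\theta$-case.
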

Then we can prove the $p$-weak ergodicity of $\mathbf{R}_t^{\mathbf{X}}$ when ${\rm Law}(\mathbf{X}_{2,3})=\bm{\pi}^{v,\theta}$.
\begin{thm}\label{thm:weakergo1R}
	Let $p^\prime > 9$. The process $\{\bm{R}_t^{\mathbf{X}}\}_{t \ge 0}$ is weakly $p$-ergodic for any three $\cF_0$-random variables $X^{(1)} \in L^{p^\prime}(\Omega;\R_{+,0})$ and $X^{(2)},X^{(3)}$ such that ${\rm Law}(X^{(2)},X^{(3)})=\bm{\pi}^{V,\theta}$ and any $1 \le p < \frac{p^\prime+3}{4}$. Furthermore, the limit distribution $\bm{\pi}$ is independent of $X^{(1)}$ and belongs to $\cP_p(\R^3_{+,0})$ for any $p \ge 1$.
%
%
%
%
\end{thm}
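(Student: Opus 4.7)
The plan is to mirror closely the four-step strategy developed for the two-factor process $\{\bm{\theta}_t^{\mathbf{X}}\}_{t \ge 0}$ in Theorem \ref{thm:weakergo1theta}, exploiting the cascade structure of \eqref{Our_Model_true} together with the fact that, by Proposition \ref{prop:uniqueness}, $\bm{\pi}^{v,\theta}$ is the invariant distribution of the two-factor process. Under the hypothesis ${\rm Law}(X^{(2)},X^{(3)})=\bm{\pi}^{v,\theta}$, the marginal process $\{(\theta^{\mathbf{X}}_t,v^{\mathbf{X}}_t)\}_{t \ge 0}$ is stationary, which plays the same role that the stationarity of $\{v_t\}_{t \ge 0}$ played in the two-factor analysis. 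This stationarity is the single structural feature that makes Lemma \ref{lem:boundcondexpR} applicable to the couplings we shall build.

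First I would fix $\delta>0$, set $\nu=e^{-\delta}$, $t_n=\delta n$, and define $\pi_n^{R,\mathbf{X}}:={\rm Law}(\bm{R}_{t_n}^{\mathbf{X}})$. Taking a second triple $\mathbf{Y}=(Y^{(1)},X^{(2)},X^{(3)})$ with $Y^{(1)}\in L^{p'}(\Omega;\R_{+,0})$, Lemma \ref{lem:boundcondexpR} immediately gives $\cW_1(\pi_n^{R,\mathbf{X}},\pi_n^{R,\mathbf{Y}}) \le \nu^n\E[|X^{(1)}-Y^{(1)}|]$, which will suffice to conclude independence of the limit distribution $\bm{\pi}$ from $X^{(1)}$ once $1$-weak ergodicity is established. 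To prove that $\{\pi_n^{R,\mathbf{X}}\}_{n \ge 0}$ is Cauchy in $(\cP_1(\R_{+,0}^3),\cW_1)$ for $n>m$, I would introduce the random vector $\widetilde{\mathbf{X}}=(R_{t_n-t_m}^{\mathbf{X}},\theta_{t_n-t_m}^{\mathbf{X}},v_{t_n-t_m}^{\mathbf{X}})$. Stationarity of $(\theta^{\mathbf{X}},v^{\mathbf{X}})$ gives ${\rm Law}(\theta_{t_n-t_m}^{\mathbf{X}},v_{t_n-t_m}^{\mathbf{X}})=\bm{\pi}^{v,\theta}={\rm Law}(X^{(2)},X^{(3)})$, so item (III) of Proposition \ref{prop:repspace} furnishes an $\cF_0$-measurable variable $Y^{(1)}$ such that $\mathbf{Y}=(Y^{(1)},X^{(2)},X^{(3)})$ has law ${\rm Law}(\widetilde{\mathbf{X}})$. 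Pathwise uniqueness then yields $(\bm{R}_{t_m}^{\mathbf{Y}},\bm{R}_{t_m}^{\mathbf{X}})\in C(\pi_n^{R,\mathbf{X}},\pi_m^{R,\mathbf{X}})$, and Lemma \ref{lem:boundcondexpR} applies to this coupling.

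The main technical point, as in Theorem \ref{thm:weakergo1theta}, will be verifying the integrability hypotheses of Lemma \ref{lem:boundcondexpR} on the auxiliary variable $Y^{(1)}$. Because $p'>9$, one can choose $q>3$ and $\overline{q}>2q-1$ with $p'>2\overline{q}-1$ (for instance $q=(9+p')/8$, $\overline{q}=(3p'+7)/8$), so Theorem \ref{lem:FpRglob} provides the uniform bound $\sup_{t \ge 0} F_q^{R,\mathbf{X}}(t) \le \widetilde{C}_q^{R}(\E[|X^{(1)}|^q],M_q(\bm{\pi}^{v,\theta}),M_q(\bm{\pi}^{v,\theta}))$, with $M_q(\bm{\pi}^{v,\theta})<\infty$ guaranteed by Theorem \ref{thm:weakergo1theta}. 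Thus $\E[|Y^{(1)}|^q]=F_q^{R,\mathbf{X}}(t_n-t_m)$ is uniformly bounded and Lemma \ref{lem:boundcondexpR} yields $\cW_1(\pi_n^{R,\mathbf{X}},\pi_m^{R,\mathbf{X}}) \le C\nu^m$ with $C$ independent of $n,m$. Completeness of $(\cP_1(\R_{+,0}^3),\cW_1)$ produces a limit $\bm{\pi}$, and comparing $\delta<\delta'$ through the same coupling construction confirms independence of $\delta$.

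Finally, to upgrade to $p$-weak ergodicity for $1<p<(p'+3)/4$, I would pick $\varepsilon>0$ with $p'>4(p+\varepsilon)-3$, set $\overline{p}=(4(p+\varepsilon)-1+p')/4$, and invoke Theorem \ref{lem:FpRglob} to obtain $\sup_{t\ge 0}F_{p+\varepsilon}^{R,\mathbf{X}}(t)<\infty$. Combined with stationarity of $(\theta^{\mathbf{X}},v^{\mathbf{X}})$, this delivers a uniform $(p+\varepsilon)$-moment bound on the whole process $\bm{R}^{\mathbf{X}}$, so Remark \ref{rmk:suffcond} yields uniformly integrable $p$-moments and Theorem \ref{thm:equivalentWass} promotes $\cP$-convergence to $\cP_p$-convergence. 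Specializing to degenerate $X^{(1)}\equiv x$ and letting $p\uparrow\infty$ shows $\bm{\pi}\in\cP_p(\R_{+,0}^3)$ for every $p\ge 1$.
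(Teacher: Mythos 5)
Your proposal is correct and follows essentially the same route as the paper: the paper's own proof is a sketch that defers to Theorem \ref{thm:weakergo1theta} and only spells out the coupling construction, which you reproduce identically (stationarity of $\bm{\pi}^{v,\theta}$ for the two-factor marginal, Item (III) of Proposition \ref{prop:repspace} to build $\bm{Y}=(Y^{(1)},X^{(2)},X^{(3)})$ with ${\rm Law}(\bm{Y})={\rm Law}(\bm{R}^{\bm{X}}_{t_n-t_m})$, pathwise uniqueness for the coupling, Theorem \ref{lem:FpRglob} for the integrability of $Y^{(1)}$, and Remark \ref{rmk:suffcond} plus Theorem \ref{thm:equivalentWass} for the upgrade to $p>1$). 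The only blemish is inherited from the paper itself: the explicit choices $q=(9+p')/8$, $\overline{q}=(3p'+7)/8$ give $q>3$ only when $p'>15$, though the existence of admissible $q>3$, $\overline{q}>2q-1$ with $p'>2\overline{q}-1$ does hold for every $p'>9$.
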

\begin{proof}
	Fix $\delta>0$, define $\nu=e^{-\delta}$, $t_n=\delta n$ and $\pi_n^{R,\mathbf{X}}={\rm Law}(\bm{R}_{t_n}^{\mathbf{X}})$, for any $\mathbf{X}$ as in the statement of the theorem and $n \ge 0$. Since the proof is exactly the same as the one of Theorem \ref{thm:weakergo1theta}, let us only discuss, for fixed $n \ge m \ge 1$, the construction of the random variable $\bm{Y}$ such that $(Y^{(2)},Y^{(3)})=(X^{(2)},X^{(3)})$ and $\cW_1(\pi_n^{\theta,\bm{X}},\pi_m^{\theta}) \le \E[|\Delta_{t_m}^{\theta,\bm{X},\bm{Y}}] \le C\nu^m$ for some constant $C>0$ independent of $n,m$. Denote $\bm{R}_t^{\bm{X}}=(R_t^{\bm{X}},\bm{\theta}_t^{(X^{(2)},X^{(3)})})$. We notice that ${\rm Law}(X^{(2)},X^{(3)})=\bm{\pi}^{v,\theta}={\rm Law}(\bm{\theta}^{(X^{(2)},X^{(3)})}_{t_n-t_m})$, since $\bm{\pi}^{v,\theta}$ is the stationary distribution of the two-factor process. By using Item (III) of Proposition \ref{prop:repspace}, we know that there exists a random variable $Y^{(1)}$ such that ${\rm Law}(Y^{(1)})={\rm Law}(R^{\bm{X}}_{t_n-t_m})$ and, setting $\bm{Y}=(Y^{(1)},X^{(2)},X^{(3)})$, ${\rm Law}(\bm{Y})={\rm Law}(\bm{R}_{t_n-t_m}^{\bm{X}})$. By both uniqueness in law and pathwise uniqueness of the strong solution, it is clear that ${\rm Law}(\bm{R}_{t_m}^{\bm{Y}})={\rm Law}(\bm{R}_{t_n}^{\bm{X}})=\pi_n^{R,\bm{X}}$. The couple $(\bm{R}_{t_m}^{\bm{Y}},\bm{R}_{t_m}^{\bm{X}}) \in C(\pi_n^{R,\bm{X}},\pi_m^{R,\bm{X}})$ is the desired coupling.
\end{proof}
Next, we state, as in Subsection \ref{subs:S3}, that $\bm{\pi}$ is the unique invariant (and limit) distribution for the three-factor process. Since the proof is exactly the same as the one of Proposition \ref{prop:uniqueness}, except for the fact that we employ the generator $\mathcal{G}$ of the three-factor process defined in \eqref{eq:gen3}, we omit it.
\begin{prop}\label{prop:uniquenessR}
	$\bm{\pi}$ is the unique invariant measure of the process $\{\bm{R}_t\}_{t \ge 0}$ whose first marginal admits finite moment of order $p>9$. Furthermore, if for some $\mathbf{X} \in L^2(\Omega;\R_{+,0}^3)$ there exists a distribution $\overline{\pi} \in \cP_p(\R_{+,0}^3)$ for some $p>9$ such that ${\rm Law}(\bm{R}_t^{\mathbf{X}}) \overset{\cP}{\to} \overline{\pi}$, then $\overline{\pi}=\bm{\pi}$.
\end{prop}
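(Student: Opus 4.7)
The plan is to replicate the strategy used in Proposition \ref{prop:uniqueness}, adapted to the three-factor setting by employing the generator $\mathcal{G}_{\bm{R}}$ of $\{\bm{R}_t\}_{t \ge 0}$ defined in \eqref{eq:gen3} and exploiting Theorem \ref{thm:weakergo1R} in place of Theorem \ref{thm:weakergo1theta}. First I would fix any $\cF_0$-measurable $\bm{X} \in L^2(\Omega;\R_{+,0}^3)$ and apply It\^o's formula to $\varphi(\bm{R}_t^{\bm{X}})$ for $\varphi \in C_c^\infty(\R_+^3)$. Taking expectations (the stochastic integrals vanish since $\varphi$ and $\nabla\varphi$ are bounded and $\bm{\sigma}$ is evaluated on paths that remain in a compact, up to localisation with the standard stopping times $T_M^{R,\bm{X}}$ introduced in \eqref{eq:TRtheta}), one obtains the Kolmogorov equation for measures
\begin{equation*}
\int_{\R_{+,0}^3}\varphi(\bm{r})\,d\pi_t^{R,\bm{X}}(\bm{r})-\int_{\R_{+,0}^3}\varphi(\bm{r})\,d\pi_0^{R,\bm{X}}(\bm{r})=\int_0^t\int_{\R_{+,0}^3}\mathcal{G}_{\bm{R}}\varphi(\bm{r})\,d\pi_s^{R,\bm{X}}(\bm{r})\,ds,
\end{equation*}
where $\pi_t^{R,\bm{X}}={\rm Law}(\bm{R}_t^{\bm{X}})$. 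Since $\{\bm{R}_t\}_{t \ge 0}$ is Feller by Proposition \ref{prop:RFeller}, \cite[Theorem 1.2]{manca2008kolmogorov} gives uniqueness of the flow of measures solving this equation for any prescribed initial datum.

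Next, choosing $\bm{X}$ as in Theorem \ref{thm:weakergo1R} (so that $\pi_t^{R,\bm{X}} \overset{\cP_1}{\to} \bm{\pi}$), the boundedness of $\mathcal{G}_{\bm{R}}\varphi$ ensures that $F_\varphi(t):=\E[\varphi(\bm{R}_t^{\bm{X}})]$ is $C^1$ and its derivative is $H_\varphi(t):=\E[\mathcal{G}_{\bm{R}}\varphi(\bm{R}_t^{\bm{X}})]$; weak convergence of $\pi_t^{R,\bm{X}}$ yields limits of both $F_\varphi(t)$ and $H_\varphi(t)$, and Lemma \ref{lem:asymplem} forces
\begin{equation*}
\int_{\R_{+,0}^3}\mathcal{G}_{\bm{R}}\varphi(\bm{r})\,d\bm{\pi}(\bm{r})=0,\qquad \forall\varphi \in C_c^\infty(\R_+^3).
\end{equation*}
Hence the constant flow $t \mapsto \bm{\pi}$ solves the Kolmogorov equation with initial datum $\bm{\pi}$ and, by the uniqueness mentioned above, it is the only such solution: choosing $\bm{X}$ with ${\rm Law}(\bm{X})=\bm{\pi}$ we deduce $\pi_t^{R,\bm{X}}=\bm{\pi}$ for all $t \ge 0$, i.e.\ $\bm{\pi}$ is invariant.

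For the uniqueness statement, suppose $\overline{\pi} \in \cP_p(\R_{+,0}^3)$ with $p>9$ satisfies $\pi_t^{R,\bm{X}} \overset{\cP}{\to}\overline{\pi}$ for some $\bm{X} \in L^2(\Omega;\R_{+,0}^3)$; repeating the Lemma \ref{lem:asymplem} argument shows $\overline{\pi}$ is invariant as well. Taking $\bm{X}$ with ${\rm Law}(\bm{X})=\overline{\pi}$, the $(\theta,v)$-marginal of $\overline{\pi}$ is invariant for the two-factor process $\{\bm{\theta}_t\}_{t \ge 0}$ and, since $\overline{\pi} \in \cP_p(\R_{+,0}^3)$ with $p>9$, its marginal also has finite moment of order $p>9$: Proposition \ref{prop:uniqueness} then identifies this marginal with $\bm{\pi}^{v,\theta}$. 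Writing $\bm{X}=(X^{(1)},X^{(2)},X^{(3)})$ we thus have ${\rm Law}(X^{(2)},X^{(3)})=\bm{\pi}^{v,\theta}$ and $X^{(1)} \in L^p(\Omega;\R_{+,0})$ with $p>9$, so Theorem \ref{thm:weakergo1R} yields $\pi_t^{R,\bm{X}}\overset{\cP_1}{\to}\bm{\pi}$; since $\pi_t^{R,\bm{X}}=\overline{\pi}$ for all $t$ by invariance, we conclude $\overline{\pi}=\bm{\pi}$. The uniqueness of invariant measures with a first marginal having a moment of order $p>9$ is then a direct consequence: any such invariant measure is trivially a limit distribution of itself and so coincides with $\bm{\pi}$.

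The main obstacle I anticipate is the careful reduction of the three-factor marginal problem to the two-factor case; in particular, verifying that the projected invariant measure inherits the moment bound required by Proposition \ref{prop:uniqueness} (which is immediate here thanks to $p>9$), and ensuring that the Kolmogorov equation derived for the three-factor process is truly uniquely determined by its initial condition despite the degeneracy of $\bm{\sigma}$ on $\{R=0\}\cup\{\theta=0\}\cup\{v=0\}$—this is precisely the point covered by the Feller property together with \cite[Theorem 1.2]{manca2008kolmogorov}.
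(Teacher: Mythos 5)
Your proposal is correct and follows essentially the same route as the paper, which omits this proof precisely because it is the argument of Proposition \ref{prop:uniqueness} transplanted to the generator $\mathcal{G}_{\bm{R}}$ of \eqref{eq:gen3}; in particular you correctly identify the one genuinely new ingredient, namely that the $(\theta,v)$-marginal of an invariant (or limit) measure is itself invariant for the two-factor process and is pinned down by Proposition \ref{prop:uniqueness} (playing the role that the uniqueness of $\mathbf{m}_v$ played one level down), after which Theorem \ref{thm:weakergo1R} closes the argument exactly as Theorem \ref{thm:weakergo1theta} did before.
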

Next we proceed as in \ref{subs:S4} to finally prove Theorem~\ref{thm:pweakergoR}.
\begin{proof}[Proof of Theorem~\ref{thm:pweakergoR}]
	Let us first prove the statement under the additional assumption that $p>9$. The argument is analogous to the one of Theorem \ref{thm:weakergo1theta}. Indeed, consider any sequence $t_n \to +\infty$ and let $\pi_n^{\mathbf{X}}={\rm Law}(\bm{R}_{t_n}^{\mathbf{X}})$. We extract a subsequence $\{t_{n_k}\}_{k \ge 1}$ such that $\limsup_{n \to +\infty}\mathcal{W}_p(\pi_n^{\mathbf{X}},\bm{\pi})=\lim_{k \to +\infty}\mathcal{W}_p(\pi_{n_k}^{\mathbf{X}},\bm{\pi})$. We need to prove that $\{\pi_{n_k}^{R,\mathbf{X}}\}_{k \ge 0}$ is relatively compact. This is clear once we notice that there exists $\varepsilon>0$ such that $\overline{p}>2(p+\varepsilon)-1$ and then 
	\begin{equation*}
		\sup_{k \ge 1}M_{p+\varepsilon}(\pi_{n_k}^{\bm{X}}) \le \sup_{k \ge 1}3^{p+\varepsilon-1}(F_{p+\varepsilon}^{R,\bm{X}}(t_{n_k})+F_{p+\varepsilon}^{\theta,(X^{(2)},X^{(3)})}(t_{n_k})+F_{p+\varepsilon}^{v,X^{(3)}}(t_{n_k}))<\infty,
	\end{equation*}
	where we used Theorems \ref{lem:FpRglob} and \ref{lem:Fpthetaglob} together with Corollary \ref{cor:upboundFpv}. Hence, by Remark \ref{rmk:boundcomp}, we know that there exists $\overline{\bm{\pi}} \in \cP_p(\R_{+,0}^3)$ such that, up to a subsequence, $\pi_{n_k}^{\bm{X}} \overset{\cP_p}{\to} \overline{\bm{\pi}}$. By Proposition \ref{prop:uniquenessR} we get $\overline{\bm{\pi}}=\bm{\pi}$ and then
	\begin{equation*}
		0 \le \liminf_{n \to +\infty}\mathcal{W}_p(\pi_n^{\mathbf{X}},\bm{\pi})\le \limsup_{n \to +\infty}\mathcal{W}_p(\pi_n^{\mathbf{X}},\bm{\pi})=\lim_{k \to +\infty}\mathcal{W}_p(\pi_{n_k}^{\mathbf{X}},\bm{\pi})=0.
	\end{equation*}
	In particular, this implies that the statement of the theorem holds for deterministic initial data. 
	
	Let now $p \ge 2$ be any exponent satisfying the assumptions of the theorem. Consider any function $\varphi \in C_b(\R_{+,0}^3)$ and for any $\bm{x} \in \R_{+,0}^3$ we define $F^{\bm{R}}_\varphi(t;\bm{x})=\E[\varphi(\bm{R}_t^{\bm{x}})]$. Recall that if we consider a random variable $\bm{X}_\infty$ such that ${\rm Law}(\bm{X}_\infty)=\bm{\pi^{\theta,v}}$ it holds
	\begin{equation*}
		\lim_{t \to +\infty}F^{\bm{R}}_\varphi(t;\bm{x})=\E[\varphi(\bm{X}_\infty)].
	\end{equation*}
	Furthermore, since $\bm{X}$ is $\cF_0$-measurable and then independent of the Brownian motion $\{\bm{W}_t\}_{t \ge 0}$, we have
	\begin{equation*}
		\lim_{t \to +\infty}\E\left[\varphi(\bm{R}_t^{\bm{X}})\right]=\lim_{t \to +\infty}\E\left[F^{\bm{R}}_\varphi(t;\bm{X})\right]=\E[\varphi(\bm{X}_\infty)],
	\end{equation*}
	where we also used the dominated convergence theorem. Hence $\pi_t^{\bm{X}}:={\rm Law}(\bm{R}_t^{\bm{X}}) \overset{\cP}{\to} \bm{\pi}$. Furthermore, there exists $\varepsilon>0$ such that $\overline{p}>2(p+\varepsilon)-1$ and then, by Theorems \ref{lem:Fpthetaglob} and \ref{lem:FpRglob} together with Corollary \ref{cor:upboundFpv}, we get
	\begin{equation*}
		\sup_{t \ge 0}M_{p+\varepsilon}(\pi_{t}^{\bm{X}}) \le \sup_{t \ge 0}3^{p+\varepsilon-1}(F_{p+\varepsilon}^{R,\bm{X}}(t)+F_{p+\varepsilon}^{\theta,(X^{(2)},X^{(3)})}(t)+F_{p+\varepsilon}^{v,X^{(3)}}(t))<\infty,
	\end{equation*}
	hence $\{\pi_t^{\bm{X}}\}_{t \ge 0}$ has uniformly integrable $p$-moments by Remark \ref{rmk:suffcond} and then, by Theorem \ref{thm:equivalentWass}, $\pi_t^{\bm{X}} \overset{\cP_p}{\to}\bm{\pi}$.
\end{proof}

\section{Conclusion} \label{Sec:Conclusion}


In this study, we considered a possible variation on the Cox-Ingersoll-Ross (CIR) process, whose interest in mathematical finance is already established. Various modifications of the original CIR model, including those incorporating fractional Brownian motion, mixed Brownian motion, time-change mechanisms, jumps, and variance gamma processes, have been discussed in literature.

Within this context, in \cite{Ceci2024} a three-factor model known as the $CIR^3$ model has been introduced, where both the trend and volatility are stochastic and correlated. Further properties of such a model, together with some suitable applications, have been investigated in \cite{Ascione2024Jan,Ascione2023Dec}. The main focus of this paper is on exploring the stationary and limit distributions of the model and on establishing its Wasserstein ergodicity, a task that requires nuanced arguments involving topological aspects of Wasserstein spaces and Kolmogorov equations for measures. This strategy can also be applied to prove the Wasserstein ergodicity of the well-known three-factor Chen model.
Specifically, we first explored the $p$-weak ergodicity of the $CIR$ process $v_t$. While the process's strong ergodicity is well-known, existing classical results do not address $p$-weak ergodicity for $p>1$ or the rate of convergence. Hence, we provided detailed insights into the $p$-Wasserstein convergence of the single-factor process.
Furthermore, our investigation focused on establishing the existence of the limit distribution of the two-factor model, in line with the inherent "cascade structure" of the equation. This process is subdivided into four steps, each thoroughly elaborated upon.
Finally, we presented the proof of Theorem \ref{thm:pweakergoR} subsequent to establishing the limit distribution of the two-factor model.

In summary, in conjunction with \cite{Ascione2023Dec,Ascione2024Jan,Ceci2024}, we have thoroughly examined the properties and practical applications of the $CIR^3$ model. Our primary emphasis was on investigating its limit and stationary distribution properties to establish its $p$-weak ergodicity under appropriate initial data assumptions. This property can be used to identify and study stationary time series, as well as the long-term behavior of non-stationary time series. Additionally, we have highlighted the broader mathematical interest in this problem beyond its financial implications, suggesting potential extensions to more complex systems of stochastic differential equations with a "leader-follower" structure in future research efforts.

\section*{Declarations}
No funding was received for conducting this study. The authors have no relevant financial or non-financial interests to disclose.

\bibliographystyle{apalike} 
\bibliography{MyBibCredit}

\appendix
\section{Proof of Proposition \ref{prop:repspace}}\label{app:reprspace}
We provide here the construction of the representation space $(\Omega, \cF, \{\cF_t\}_{t \ge 0}, \bP)$. First, let us consider the canonical space $(\Omega^\prime, \cF^\prime, \{\cF_t^\prime\}, \bP^\prime)$ of a $3$-dimensional standard Brownian motion $\overline{W}=\{(\overline{W}^{(1)}_t,\overline{W}^{(2)}_t,\overline{W}^{(3)}_t)\}_{t \ge 0}$ conditioned to $(\overline{W}^{(1)}_0,\overline{W}^{(2)}_0,\overline{W}^{(3)}_0)$. The definition of a canonical space and a canonical Feller process is given in \cite{revuz2013continuous}, but let us recall here the fundamental idea of the construction. We let $\Omega^\prime=C(\R_{+,0};\R^3)$, i.e. the space of continuous functions from $[0,+\infty)$ to $\R^3$ and for any $\omega \in \Omega^\prime$ we define $\overline{W}_t(\omega)=\omega(t)$, i.e. the evaluation map. Once we equip $\R^3$ with its Borel $\sigma$-algebra, we can consider, for any $t \ge 0$ $\cF^0_t$ as the $\sigma$-algebra on $\Omega^\prime$ generated by $\overline{W}_s$ for $s \le t$ and $\cF^0_\infty$ as the $\sigma$-algebra generated by the union of all $\cF^0_t$. By Kolmogorov's extension theorem, we can define a probability measure $\bP^\prime$ (that is the Wiener measure) on $\cF^0_\infty$ such that $\overline{W}$ is a standard Brownian motion. Once we do this, we define, for any $t \ge 0$, $\cF^\prime_t$ and $\cF^\prime$ as the completion respectively of $\cF^0_t$ and $\cF_\infty^0$ with respect to $\bP^\prime$. 

Next let us consider the probability space $(\Omega'',\cF'',\bP'')$, where $\Omega''=[0,1]$, $\bP''$ is the Lebesgue measure on $([0,1],\cB[0,1])$ and $\cF''$ is the completion of $\cB[0,1]$ with respect to $\bP''$. It is well-known that for any probability distribution $\mu \in \cP(\R)$ there exists a random variable $X: \Omega'' \to \R$ with $\mu={\rm Law}(X)$. 

Finally, we consider the space $(\Omega, \cF, \{\cF_t\}_{t \ge 0}, \bP)$ where $\Omega=\Omega' \times \Omega''$, $\bP=\bP' \otimes \bP''$, $\cF$ is the completion of $\cF' \otimes \cF''$ and $\cF_t$ is the completion of $\cF'_t \otimes \cF''$. In this space, we can define the process
\begin{equation*}
	\widetilde{W}(\omega)=\widetilde{W}(\omega',\omega'')=\overline{W}(\omega'').
\end{equation*}
We observe that this is a Brownian motion on $(\Omega, \cF, \{\cF_t\}_{t \ge 0}, \bP)$. Indeed, $\widetilde{W}:\Omega \to C(\R_{+,0})$ by definition. Furthermore, for any measurable set $A \subset C(\R_{+,0})$, we have
\begin{equation*}
	\bP(\{\omega \in \Omega: \widetilde{W}(\omega) \in A\})=\bP(\Omega' \times \{\omega'' \in \Omega: \overline{W}(\omega'') \in A\})=\bP''(\{\omega'' \in \Omega: \overline{W}(\omega'') \in A\}),
\end{equation*}
i.e. ${\rm Law}(\widetilde{W})={\rm Law}(\overline{W})$. Now consider $\Sigma$ as in \eqref{eq:corrmat} and let
\begin{equation*}
	\mathcal{R}=\begin{pmatrix} \sqrt{1-\rho_\theta^2-\rho_v^2} & \rho_\theta & \rho_v\\
		0 & 1 & 0 \\
		0 & 0 & 1
	\end{pmatrix}.
\end{equation*}
Then, clearly, the process $W=\{(W^{(1)}_t,W^{(2)}_t,W^{(3)}_t)\}_{ t \ge 0}$ defined, for any $t \ge 0$ as $W_t=\mathcal{R}\widetilde{W}_t$ is a correlated Brownian motion with infinitesimal correlation matrix $\Sigma$. Moreover, for any $\mu \in \cP(\R)$, we can consider the cumulative probability distribution function $F_\mu(x)=\mu(-\infty,x]$ and its generalized inverse
\begin{equation*}
	F_\mu^{\leftarrow}(y)=\inf\{x \in \R: \ y \le F_\mu(x)\}, \ y \in [0,1].
\end{equation*}
Then we can define
\begin{equation}\label{eq:Skorokhodconstr}
	X(\omega):=F_\mu^{\leftarrow}(\omega')
\end{equation}
so that, for any real number $x \in \R$ we have
\begin{equation*}
	\bP(\{\omega \in \Omega: X(\omega) \le x\})=\bP(\{\omega' \in \Omega': \ F_\mu^{\leftarrow}(\omega')\}\times \Omega'')=F_\mu(x).
\end{equation*}
To prove Item (III), observe that we can assume, without loss of generality, that $d_2=1$. By the disintegration theorem, we know that there exists a Borel-measurable family $\{\nu(\cdot|x)\}_{x \in \R^{d_1}}$ of probability measures such that for any $A \subset \R^{d_1+1}$, denoting, for any $x \in \R$, $A_x=\{y \in \R: \ (x,y) \in A\}$,
\begin{equation*}
	\pi(A)=\int_{\R}\nu(A_x|x)d\mu(x).
\end{equation*}
For each one of these probability measures, we consider the random variable $Y_x$ constructed via \eqref{eq:Skorokhodconstr} and then the random variable $Y=Y_X$. Then, by construction, ${\rm Law}(X,Y)=\pi$.
\qed

\end{document}